\definecolor{darkblue}{RGB}{0,0,170}
\definecolor{brickred}{RGB}{200,0,0}
\newtheorem{defi}{Definition}[subsection]
\newtheorem{theo}[defi]{Theorem}
\newtheorem{prop}[defi]{Proposition}
\newtheorem{lem}[defi]{Lemma}
\newtheorem{rmk}[defi]{Remark}
\providecommand{\tfaname}{\it Proof.}
\newenvironment{tfa}{\begin{proof}[\tfaname]}{\end{proof}}
\newcommand{\R}{\mathbb{R}}
\newcommand{\N}{\mathbb{N}}
\newcommand{\C}{\mathcal{C}}
\newcommand{\A}{\mathcal{A}}
\newcommand{\T}{\mathcal{T}}
\newcommand{\eps}{\varepsilon}
\newcommand{\dist}{\hbox{dist}}
\newcommand{\Ds}{{\left(-\Delta\right)}^s}
\begin{document}


\begin{center}
\Large{\bf\scshape large $s$-harmonic functions
and boundary blow-up solutions for the fractional laplacian}
\end{center}

\begin{center}
\scshape nicola abatangelo$\,$\footnote{
Laboratoire Ami\'enois de Math\'ematique Fondamentale et Appliqu\'ee,
UMR CNRS 7352, Universit\'e de Picardie Jules Verne,
33 rue Saint-Leu, 80039 Amiens (France) -- 
\texttt{nicola.abatangelo@u--picardie.fr}}$^{,\,}$\footnote{
Dipartimento di Matematica Federigo Enriques,
Universit\`a degli Studi di Milano,
via Saldini 50, 20133 Milano (Italy)}
\end{center}

\begin{center}
\begin{minipage}{425pt}
\begin{small}
\bf Abstract. \rm We present a notion of weak solution for the Dirichlet problem
driven by the fractional Laplacian, following the Stampacchia theory.
Then, we study semilinear problems of the form
$$
\left\lbrace\begin{array}{ll}
\Ds u=\pm\,f(x,u) & \hbox{ in }\Omega \\ 
u=g & \hbox{ in }\R^n\setminus\overline{\Omega}\\
Eu=h & \hbox{ on }\partial\Omega
\end{array}\right.
$$
when the nonlinearity $f$ and the boundary data $g,h$ are positive, 
but allowing the right-hand side to be both positive or negative
and looking for solutions
that blow up at the boundary.
The operator $E$ is a weighted limit to the boundary:
for example, if $\Omega$ is the ball $B$, there exists a constant $C(n,s)>0$
such that 
$$
Eu(\theta)=C(n,s)\lim_{\stackrel{\hbox{\scriptsize $x\!\rightarrow\!\theta$}}{x\in B}}u(x)\,{\dist(x,\partial B)}^{1-s},
\hbox{ for all }\theta\in\partial B.
$$

Our starting observation is the existence of 
$s$-harmonic functions which explode at the boundary:
these will be used both as supersolutions in the case 
of negative right-hand side
and as subsolutions in the positive case.
\end{small}
\end{minipage}
\end{center}

\tableofcontents


\section{Introduction}

In this paper we study a suitable notion of weak solution
to semilinear problems driven by the fractional Laplacian $\Ds$, i.e.
the integral operator defined as (see e.g. Di Nezza, Palatucci and Valdinoci \cite{hitchhiker} for an introduction)
\begin{equation}
\Ds f(x)=\A(n,s)\,PV\int_{\R^n}\frac{f(x)-f(y)}{{|x-y|}^{n+2s}}\;dy.
\end{equation}
where $\mathcal{A}(n,s)$ is a normalizing constant\footnote{
$\displaystyle\A(n,s)= 
 \frac{\Gamma\left(\frac{n}{2}+s\right)}{{\pi}^{n/2+2s}\,\Gamma(2-s)}\cdot s(1-s)$, see \cite[formula (1.1.2)]{landkof};
 this constant is chosen in such a way that, for any $f$ in the Schwartz class $\mathcal{S}$,
 $\mathcal{F}[\Ds f](\xi)={|\xi|}^{2s}\,\mathcal{F}f(\xi)$
 where $\mathcal{F}$ is the Fourier transform, see \cite[formulas (1.1.6), (1.1.12') and (1.1.1)]{landkof}}.

In order to do this, we will need to develop a theory
for the Dirichlet problem for the fractional Laplacian with measure data
(see Karlsen, Petitta and Ulusoy \cite{karlsen} and Chen and V\'eron \cite{chen-veron}, for earlier results
in this direction).
We pay particular attention to those solutions having an explosive behaviour at 
the boundary of the prescribed domain,
known in the literature as \it large solutions \rm or also
\it boundary blow-up solutions\rm.
\smallskip

Let us recall that in the classical setting (see Axler, Bourdon and Ramey \cite[Theorem 6.9]{axler}),
to any nonnegative Borel measure $\mu\in\mathcal{M}(\partial B)$ on $\partial B$ it is possible to associate,
via the representation through the Poisson kernel,
a harmonic function in $B$ with $\mu$ as its trace on the boundary.
Conversely,
any positive harmonic function on the ball $B$
has a trace on $\partial B$ that is a nonnegative Borel measure
(see \cite[Theorem 6.15]{axler}).

When studying the semilinear problem for the Laplacian,
solutions can achieve the boundary datum
$+\infty$ on the whole boundary. More precisely,  take $\Omega$ a bounded smooth domain
and $f$ nondecreasing such that $f(0)=0$. According to the works of
Keller \cite{keller} and Osserman \cite{osserman},
the equation 
$$
\left\lbrace\begin{array}{l}
\displaystyle
\Delta u=f(u) \ \hbox{ in }\Omega\subseteq\R^n, \\
\displaystyle
\lim_{\stackrel{\hbox{\scriptsize $x\!\rightarrow\!\partial\Omega$}}{x\in\Omega}}u(x)=+\infty.
\end{array}\right.
$$
has a solution if and only if $f$ satisfies the so called \it Keller-Osserman condition\rm,
that is
$$
\int^\infty\frac{dt}{\sqrt{F(t)}}<+\infty,\quad F'(t)=f(t),
$$
see also Dumont, Dupaigne, Goubet and R{\u{a}}dulescu \cite{KO-dupaigne} for the case of oscillating nonlinearity.
The case of nonsmooth domains is delicate, see in particular the work of Dhersin and Le Gall \cite{dherslegall}
for the case $f(u)=u^2$.
For the same nonlinearity, and for $\Omega=B$, Mselati \cite{mselati} completely classified
positive solutions in terms of their boundary trace,
which can be $+\infty$ on one part of the boundary and a measure that doesn't
charge sets of zero boundary capacity on the remaining part.
See the upcoming book by Marcus and V\'eron for further developments in this direction.
\smallskip

In the fractional context, our starting point is that
large solutions arise even in linear problems.
In particular, it is possible to provide
\it large $s$-harmonic functions\rm, i.e. functions satisfying
\begin{equation}\label{problem}
\left\lbrace\begin{array}{l}
\displaystyle
\Ds u=0 \ \hbox{ in some open bounded region }\Omega\subseteq\R^n \\
\displaystyle
\lim_{\stackrel{\hbox{\scriptsize $x\!\rightarrow\!\partial\Omega$}}{x\in\Omega}}u(x)=+\infty
\end{array}\right.
\end{equation}
see for example Remark \ref{largesharm} below.
An example of a large $s$-harmonic function on the unit ball $B$ 
is
$$
u_\sigma(x)=\left\lbrace\begin{array}{ll}
\displaystyle
\frac{c(n,s)}{\left(1-|x|^2\right)^\sigma} & \hbox{ in }B \\
\displaystyle
\frac{c(n,s+\sigma)}{\left(|x|^2-1\right)^\sigma} & \hbox{ in }\R^n\setminus\overline{B}
\end{array}\right.\qquad\sigma\in(0,1-s),\ \ c(n,s)=\frac{\Gamma(n/2)\,\sin(\pi s)}{\pi^{1+n/2}}.
$$
See Lemma \ref{expl-sol} below.
Moreover, letting $\sigma\rightarrow1-s$ we recover
the following example found in Bogdan, Byczkowski, Kulczycki,
Ryznar, Song, and Vondra{\v{c}}ek \cite{sharm},
qualitatively different from the previous one:
$$
u_{1-s}(x)=\left\lbrace\begin{array}{ll}
\displaystyle
\frac{c(n,s)}{\left(1-|x|^2\right)^{1-s}} & \hbox{ in }B, \\
\displaystyle
0 & \hbox{ in }\R^n\setminus\overline{B}.
\end{array}\right.
$$
The function $u_{1-s}$ so defined satisfies
$$
\left\lbrace\begin{array}{ll}
\Ds u=0 & \hbox{ in }B \\
u=0 & \hbox{ in }\R^n\setminus\overline{B}
\end{array}\right.
$$
and shows how problems where only outer values are prescribed
are ill-posed in the classical sense. Different kinds of boundary conditions have
to be taken into account:
indeed, in the first case we have an $s$-harmonic function
associated to the prescribed data of $u_\sigma$ outside $B$;
in the second case all the mass of the boundary datum 
concentrates on $\partial B$.
This means that we need a notion of weak solution
that can deal at the same time with these two different boundary data,
one on the complement of the domain
and the other one on its boundary.
\smallskip

Recently, Felmer and Quaas \cite{felmer-quaas} and Chen, Felmer and Quaas \cite{chen-felmer}
have shown the
existence of large solutions to problems of the form
$$
\left\lbrace\begin{array}{ll}
\Ds u+{u}^{\,p}=f & \hbox{ in }\Omega, \\
u=g & \hbox{ in }\R^n\setminus\overline{\Omega}
\end{array}\right.
$$
both under assumptions of the explosion of the datum $g$ at $\partial\Omega$
and when $g=0$.
In all cases they need to assume $p>2s+1$.
Our approach allows to deal with general equations of the form
$$
\Ds u=f(x,u) \quad\hbox{ in }\Omega
$$
with no necessary assumptions on the sign and the growth of the nonlinearity $f$
and to provide large solutions in both cases where $u$ is
prescribed outside of $\Omega$ or only at $\partial\Omega$ as a measure.

\subsection{The notion of $s$-harmonicity}

Our starting point is the definition of $s$-harmonicity, found in Landkof \cite[\textsection 6.20]{landkof}.
Denote by
$$
\eta(x)=\left\lbrace\begin{array}{ll}
\displaystyle
\frac{c(n,s)}{{|x|}^n\,\left(|x|^2-1\right)^s} & |x|>1 \\
0 & |x|\leq 1
\end{array}\right.
$$
and by
$$
\eta_r(x)=\frac{1}{r^n}\;\eta\left(\frac{x}{r}\right)=\left\lbrace\begin{array}{ll}
\displaystyle
\frac{c(n,s)\,r^{2s}}{{|x|}^n\,\left(|x|^2-r^2\right)^s} & |x|>r, \\
0 & |x|\leq r.
\end{array}\right.
$$
The constant $c(n,s)$ above is chosen in such a way that
\begin{equation}\label{cns}
\int_{\R^n}\eta(x)\;dx=\int_{\R^n}\eta_r(x)\;dx=1
\end{equation}
and therefore, see \cite[\textsection 6.19]{landkof}
\begin{equation}\label{cns2}
c(n,s)=\frac{\Gamma(n/2)\,\sin(\pi s)}{\pi^{1+n/2}}.
\end{equation}

The definition of $s$-harmonicity is given via a
mean value property, namely

\begin{defi}\label{sharm-def} We say that a measurable nonnegative function 
$u:\R^n\rightarrow[0,+\infty]$ is
$s$-harmonic on an open set $\Omega\subseteq\R^n$ if $u\in C(\Omega)$ and
for any $x\in\Omega$ and $0<r<\dist(x,\partial\Omega)$
\begin{equation}\label{sharm-eq}
u(x)=\int_{\C B_r(x)}\frac{c(n,s)\,r^{2s}}{|y-x|^n\,\left(|y-x|^2-r^2\right)^s}\:u(y)\;dy
=\left(\eta_r*u\right)(x).
\end{equation}
\end{defi}

\subsection{Hypotheses and main results}

\subsection*{An integration by parts formula}\label{intparts-sec}

For any two functions $u,\ v$ in the Schwartz class $\mathcal{S}$,
the self-adjointness of the operator $\Ds$ entails
$$
\int_{\R^n}u\,\Ds v\ =\ \int_{\R^n}\Ds u\,v,
$$
this follows from the representation of the fractional Laplacian
via the Fourier transform,
see \cite[Paragraph 3.1]{hitchhiker}. By splitting $\R^n$ into two domains of integration 
\begin{equation}\label{intparts}
\int_{\Omega}u\,\Ds v-\int_{\Omega}\Ds u\,v\ =\ 
\int_{\R^n\setminus\Omega}\Ds u\,v-\int_{\R^n\setminus\Omega}u\,\Ds v.
\end{equation}

\begin{prop}\label{integrbypartsform}
Let $\Omega\subseteq\R^n$ open and bounded. 
Let $C^{2s+\eps}(\Omega)=\{v:\R^n\rightarrow\R \hbox{ such that } v\in C(\Omega)\hbox{ and for any }K\hbox{ compactly supported
in }\Omega, \hbox{ there exists }\alpha=\alpha(K,v)\hbox{ such that }v\in C^{2s+\alpha}(K) \}$.
If $u\in C^{2s+\eps}(\Omega)\cap L^\infty(\R^n)$ and 
$v=0$ in $\R^n\setminus\Omega$, $v\in C^{2s+\eps}(\Omega)\cap C^s(\R^n)$
and $\Ds v\in L^1(\R^n)$, then 
\begin{equation}\label{intpartsprop}
\int_{\Omega}u\,\Ds v-\int_{\Omega}\Ds u\,v\ =\ -\int_{\R^n\setminus\Omega}u\,\Ds v.
\end{equation}
\end{prop}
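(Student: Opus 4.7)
The plan is to recognise that once identity (\ref{intparts}) is known to hold for the specific $u$ and $v$ in the statement, the hypothesis $v\equiv 0$ on $\R^n\setminus\Omega$ automatically kills the term $\int_{\R^n\setminus\Omega}\Ds u\,v$ on its right-hand side and produces exactly (\ref{intpartsprop}). Since the author has already motivated (\ref{intparts}) at the level of the Schwartz class via self-adjointness of $\Ds$, the real task is to extend it to the weaker regularity class of the proposition.

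The first preparatory step is to check that every integral appearing in (\ref{intpartsprop}) is absolutely convergent. The two integrals featuring $\Ds v$ are immediately bounded by $\|u\|_{L^\infty(\R^n)}\,\|\Ds v\|_{L^1(\R^n)}$. The delicate one is $\int_\Omega \Ds u\,v$: the pointwise value $\Ds u(x)$ is well defined and continuous on $\Omega$ by the local $C^{2s+\eps}$-regularity together with the global boundedness of $u$, but it can blow up near $\partial\Omega$ at a rate $\dist(x,\partial\Omega)^{-2s}$ (the worst case, arising from a jump of $u$ across $\partial\Omega$ through the tail of the singular kernel). Since $v\in C^s(\R^n)$ and $v$ vanishes outside $\Omega$, one has $|v(x)|\leq C\,\dist(x,\partial\Omega)^s$ as $x\to\partial\Omega$; the product is therefore dominated by $\dist(x,\partial\Omega)^{-s}$, which is integrable on $\Omega$ because $s\in(0,1)$.

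With absolute convergence in hand, I would prove (\ref{intparts}) itself by approximation. Mollify $u$ and $v$ (truncating by a smooth cutoff at infinity if needed) to produce Schwartz functions $u_\delta,v_\delta$ for which $\Ds$ commutes with the convolution; apply (\ref{intparts}) to $(u_\delta,v_\delta)$, for which the Schwartz derivation is valid, and pass $\delta\to 0$ by dominated convergence separately on $\Omega$ and on $\R^n\setminus\Omega$. The dominating functions are precisely those produced in the previous paragraph: mollification does not enlarge either $\|u\|_{L^\infty(\R^n)}$ or the H\"older seminorm of $v$, so the bounds are uniform in $\delta$. The main obstacle---and the only genuinely delicate point---is this uniform boundary estimate on $\Ds u_\delta\cdot v_\delta$ near $\partial\Omega$, where the $C^s$-vanishing of $v_\delta$ must be balanced against the sharp $\dist^{-2s}$-blow-up of $\Ds u_\delta$; once this is in place, everything else is routine limit passage.
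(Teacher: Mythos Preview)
Your overall plan---mollify, apply the Schwartz identity, pass to the limit---is sound and close in spirit to the paper's, but your integrability argument for $\int_\Omega(\Ds u)\,v$ has a real gap. You assert $|\Ds u(x)|=O(\delta(x)^{-2s})$, attributing the worst case to the tail of the kernel. The tail indeed contributes at most $C\|u\|_{L^\infty}\,\delta(x)^{-2s}$, but the \emph{near} part of the singular integral is controlled only by the local $C^{2s+\alpha}$-seminorm of $u$ on $B_{\delta(x)/2}(x)$, and the space $C^{2s+\eps}(\Omega)$ as defined lets that seminorm grow without bound as $x\to\partial\Omega$. A bounded function such as $u(x)=\sin\!\big(\delta(x)^{-k}\big)$ in $\Omega$ (extended by zero) lies in $C^{2s+\eps}(\Omega)\cap L^\infty(\R^n)$ for every $k$, yet near $\partial\Omega$ its local frequency is $\sim\delta^{-(k+1)}$ and $\Ds u$ behaves like $\delta^{-2s(k+1)}$; for $k$ large enough the product $(\Ds u)\,v$ is not integrable. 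So neither the a priori finiteness of $\int_\Omega(\Ds u)\,v$ nor the uniform domination you need for $\Ds u_\delta\cdot v_\delta$ follows from the stated hypotheses, and the observation that ``mollification does not enlarge $\|u\|_{L^\infty}$'' is correct but does not deliver what the argument requires.

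The paper's proof sidesteps this by \emph{not} approximating $u$. It first takes $u\in\mathcal{S}$---so $\Ds u$ is globally Schwartz and $\int_\Omega v\,\Ds u$ is unproblematic---and mollifies only $v$, setting $v_k=\alpha_k*v$. The technical core is then the $L^1(\R^n)$-convergence $\Ds v_k\to\Ds v$: on $\{\delta>1/k\}$ one has $\Ds v_k=\alpha_k*\Ds v$, while on the thin strip $\{\delta<1/k\}$ one writes $\Ds v_k=v*\Ds\alpha_k$ and combines the $C^s$-vanishing $|v|\le C\delta^s$ with the explicit bound $|\Ds\alpha_k|\le Ck^{n+2s}$ to get an $L^1$-contribution of order $k^{-(1-s)}\to 0$; a Fatou/Brezis--Lieb argument then upgrades pointwise plus norm convergence to $L^1$-convergence. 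The passage from $u\in\mathcal{S}$ to general $u\in C^{2s+\eps}(\Omega)\cap L^\infty(\R^n)$ is in fact not spelled out in the paper either.
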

The proof can be found in the Appendix.

From now on the set $\Omega\subseteq\R^n$ will be an
open bounded domain with $C^{1,1}$ boundary.
More generally, we will prove in Section \ref{liner-sec} the following
\begin{prop}\label{intparts-prop} Let 
$\delta(x)=\dist(x,\partial\Omega)$ for any $x\in\R^n$,
and $u\in C^{2s+\eps}_{loc}(\Omega)$ such that
\begin{equation}\label{gg}
{\delta(x)}^{1-s}u(x)\in C(\overline{\Omega})
\qquad\hbox{ and }\qquad
\int_{\R^n}\frac{|u(x)|}{1+{|x|}^{n+2s}}\;dx<+\infty.
\end{equation}
Let
\begin{description}
\item[\rm --] $G_\Omega(x,y)$, $x,y\in\Omega,\,x\neq y$, be the Green function 
of the fractional Laplacian on $\Omega$, that is
$$
G_\Omega(x,y)=\frac{\A(n,-s)}{{|x-y|}^{n-2s}}-H_\Omega(x,y)
$$ 
and $H_\Omega$ is the unique function in $C^s(\R^n)$ solving
$$
\left\lbrace\begin{array}{ll}
\Ds H_\Omega(x,\cdot)=0 & \hbox{ in }\Omega \\
H_\Omega(x,y)=\frac{\A(n,-s)}{{|x-y|}^{n-2s}} & \hbox{ in }\R^n\setminus\overline{\Omega}
\end{array}\right.
$$
pointwisely,\footnote{the construction of $H$ can be found in the proof of Theorem \ref{meanvalue} below}
\item[\rm --] $P_\Omega(x,y)=-\Ds G_\Omega(x,y)$, $x\in\Omega,y\in\R^n\setminus\overline{\Omega}$,
be the corresponding Poisson kernel,
\item[\rm --] for $x\in\Omega,\theta\in\partial\Omega$,\footnote{this is a readaptation of the Martin kernel of $\Omega$} 
$$
M_\Omega(x,\theta)=\lim_{\stackrel{\hbox{\scriptsize $y\!\rightarrow\!\theta$}}{y\in\Omega}}\frac{G(x,y)}{{\delta(y)}^s},
$$
\item[\rm --] for $\theta\in\partial\Omega$, 
$$
Eu(\theta)=
\lim_{\stackrel{\hbox{\scriptsize $x\!\rightarrow\!\theta$}}{x\in\Omega}}
\frac{{\delta(x)}^{1-s}\,u(x)}{\int_{\partial\Omega}M_\Omega(x,\theta')\;d\mathcal{H}(\theta')},
$$ 
where the limit is well-defined in view of Lemma \ref{Eu} below.
\end{description}
Then the integration by parts formula
\begin{equation}\label{byparts}
\int_{\Omega}u\,\Ds v\ =\ 
\int_{\Omega}\Ds u\,v-\int_{\R^n\setminus\overline{\Omega}}u\,\Ds v
+\int_{\partial\Omega}Eu(\theta)\,D_sv(\theta)\;d\mathcal{H}(\theta)
\end{equation}
holds, where
$$
D_sv(\theta)=\lim_{\stackrel{\hbox{\scriptsize $x\!\rightarrow\!\theta$}}{x\in\Omega}}\frac{v(x)}{{\delta(x)}^s},
\quad\theta\in\partial\Omega
$$
for any $v\in C^s(\R^n)$ such that $\Ds v|_\Omega\in C^\infty_c(\Omega)$ 
and $v\equiv 0$ in $\R^n\setminus\Omega$.
Such a limit exists and is continuous in $\theta$
in view of Lemma \ref{testspace}.
In addition, we have the representation formula
\begin{equation}\label{representation}
u(x)=\int_\Omega G_\Omega(x,y)\Ds u(y)\; dy
-\int_{\R^n\setminus\overline{\Omega}}\Ds G_\Omega(x,y)\,u(y)\;dy
+\int_{\partial\Omega}D_sG_\Omega(x,\theta)\, Eu(\theta)\;d\mathcal{H}(\theta).
\end{equation}
\end{prop}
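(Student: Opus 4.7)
The plan is to prove the representation formula \eqref{representation} first by an exhaustion argument, and then derive the integration by parts \eqref{byparts} from it through Fubini. For the representation, I would exhaust $\Omega$ by $\Omega_\eta=\{x\in\Omega:\delta(x)>\eta\}$, which are $C^{1,1}$ for small $\eta$. On $\Omega_\eta$ the function $u$ is $C^{2s+\eps}$ up to the boundary and bounded, so the classical Green--Poisson representation gives, for $x\in\Omega_\eta$,
\begin{equation*}
u(x)=\int_{\Omega_\eta}G_{\Omega_\eta}(x,y)\,\Ds u(y)\,dy+\int_{\R^n\setminus\Omega_\eta}P_{\Omega_\eta}(x,y)\,u(y)\,dy,
\end{equation*}
and I split the second integral over $\R^n\setminus\overline\Omega$ and over the boundary layer $\Omega\setminus\overline{\Omega_\eta}$. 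As $\eta\downarrow 0$ the interior integral converges to $\int_\Omega G_\Omega(x,y)\Ds u(y)\,dy$ by monotonicity $G_{\Omega_\eta}\uparrow G_\Omega$; the exterior piece converges to $-\int_{\R^n\setminus\overline\Omega}\Ds_x G_\Omega(x,y)\,u(y)\,dy$ by dominated convergence using \eqref{gg}; and on the layer I factor $u(y)=\delta(y)^{-(1-s)}[\delta(y)^{1-s}u(y)]$, the bracket being continuous on $\overline\Omega$. Writing $y=\theta-t\nu(\theta)$ in Fermi coordinates with $t\in(0,\eta)$ and integrating out the normal variable $t$, with Lemma~\ref{Eu} supplying the precise behaviour of the normalising factor $\int_{\partial\Omega}M_\Omega(x,\theta')\,d\mathcal{H}(\theta')$ and the definition of $E$ identifying the boundary trace, the layer integral converges to $\int_{\partial\Omega}M_\Omega(x,\theta)\,Eu(\theta)\,d\mathcal{H}(\theta)=\int_{\partial\Omega}D_sG_\Omega(x,\theta)\,Eu(\theta)\,d\mathcal{H}(\theta)$, completing \eqref{representation}.

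With \eqref{representation} in hand I multiply both sides by $\Ds v(x)$ and integrate in $x$ over $\Omega$. Fubini applies because $\Ds v\in C_c^\infty(\Omega)$ is compactly supported inside $\Omega$, while $G_\Omega(\cdot,y)$, $|\Ds_x G_\Omega(\cdot,y)|$ and $M_\Omega(\cdot,\theta)$ are uniformly integrable on that support against, respectively, $\Ds u$, $u|_{\R^n\setminus\overline\Omega}$ (controlled by \eqref{gg}), and $Eu\in C(\partial\Omega)$. Swapping orders reduces the claim to three identities, valid for $y\in\Omega$, $y\in\R^n\setminus\overline\Omega$, and $\theta\in\partial\Omega$ respectively:
\begin{align*}
\int_\Omega G_\Omega(x,y)\,\Ds v(x)\,dx &= v(y),\\
\int_\Omega\big(-\Ds_x G_\Omega(x,y)\big)\,\Ds v(x)\,dx &= -\Ds v(y),\\
\int_\Omega M_\Omega(x,\theta)\,\Ds v(x)\,dx &= D_sv(\theta).
\end{align*}
The first is just the Green representation of $v$ itself, applicable because $v\equiv 0$ outside $\Omega$ and $\Ds v\in C_c^\infty(\Omega)$. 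The second follows by applying $\Ds$ in $y$ to the first identity at $y\notin\overline\Omega$, where differentiation under the integral sign is legitimate. The third is obtained by dividing the first identity by $\delta(y)^s$ and letting $y\to\theta$: the right-hand side converges to $D_sv(\theta)$ by Lemma~\ref{testspace}, and the left-hand side converges by dominated convergence to $\int_\Omega M_\Omega(x,\theta)\,\Ds v(x)\,dx$. Substituting these three identities into the swapped expression reproduces exactly \eqref{byparts}.

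The main obstacle I anticipate is the identification, within the proof of \eqref{representation}, of the limit of the boundary-layer integral with the surface integral against $M_\Omega(x,\theta)\,Eu(\theta)$. This requires sharp asymptotics for the truncated Poisson kernel $P_{\Omega_\eta}(x,y)$ as the layer thickness $\eta$ and the boundary distance $\delta(y)$ collapse at comparable rates, and a careful juggling of the three limits encoded in the definition of $E$---the rescaled trace $\delta^{1-s}u$, the normalisation $\int_{\partial\Omega}M_\Omega(x,\cdot)\,d\mathcal{H}$, and the $y\to\theta$ limit defining $M_\Omega$. Once these quantitative estimates are in place---they should follow from the $C^{1,1}$ regularity of $\partial\Omega$ together with Lemma~\ref{Eu}---everything else reduces to routine dominated-convergence arguments.
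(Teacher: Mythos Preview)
Your passage from the representation \eqref{representation} to the integration-by-parts formula \eqref{byparts} is correct and is exactly the paper's argument: the three kernel identities you list are the content of Lemma~\ref{testspace} (equations \eqref{repr-lapl}, \eqref{repr-deriv}, together with the Green representation of $v$), and the Fubini step is Proposition~\ref{strsol-weaksol}.

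For the representation formula itself the paper proceeds differently. It does not exhaust $\Omega$; instead it first shows (Theorem~\ref{pointwise}) that the right-hand side of \eqref{representation}, regarded as a function built from prescribed data $(f,g,h)$, is a pointwise solution of the Dirichlet problem, and then recovers \eqref{representation} for a given $u$ by uniqueness: set $f=\Ds u$, $g=u|_{\C\Omega}$, $h=Eu$ and compare. The boundary term emerges in the proof of Theorem~\ref{pointwise} not as a layer limit of Poisson integrals on shrinking domains, but through an approximation of $h$ by interior bump data $f_\eps$ supported in a collar (equation \eqref{h-approx}), whose Green potentials are shown to converge to the Martin integral via the coarea formula. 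This avoids ever comparing Poisson kernels on two different domains.

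Your exhaustion route is natural, but the obstacle you flag is genuine and is not disposed of by the lemmas you cite. The bound \eqref{est-poisson} for $P_{\Omega_\eta}$ is two-sided only up to unspecified constants, so integrating out the normal variable over the layer yields something \emph{comparable} to $\int_{\partial\Omega}\delta(x)^s|x-\theta|^{-n}\,[\delta^{1-s}u](\theta)\,d\mathcal{H}(\theta)$, but with an unidentified multiplicative factor---whereas you need the limit to be exactly $\int_{\partial\Omega}M_\Omega(x,\theta)\,Eu(\theta)\,d\mathcal{H}(\theta)$. Lemma~\ref{Eu} goes in the opposite direction (it computes the $E$-trace of a Martin integral) and says nothing about the behaviour of $P_{\Omega_\eta}(x,y)$ as $\eta\downarrow 0$ with $y$ in the collapsing layer. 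What your argument would actually require is a sharp first-order asymptotic of the form $P_{\Omega_\eta}(x,y)\sim c\,G_\Omega(x,y)/(\eta-\delta(y))^s$ with an identified constant $c$, i.e.\ a boundary-Harnack statement relating the kernels on $\Omega_\eta$ and $\Omega$. This is true in the $C^{1,1}$ setting but is a substantial input, not a consequence of the results you invoke.
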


The paper is organized as follows. 
Section \ref{meanvalue-sec} relates Definition \ref{sharm-def}
with the fractional Laplacian $\Ds$.
Section \ref{liner-sec} recalls some facts on Green functions and
Poisson kernels and it studies the linear Dirichlet problem
both in the pointwise and in a weak sense.
Section \ref{nonlin-sec} deals with the nonlinear problem.
Now, let us outline the main results in Section \ref{liner-sec}
and Section \ref{nonlin-sec}.

\subsection*{The Dirichlet problem}

For a fixed $x\in\Omega$ the Poisson kernel satisfies 
$$
0\leq-\Ds G_\Omega(x,y)\leq c\,\min\left\lbrace{\delta(y)}^{-s},{\delta(y)}^{-n-2s}\right\rbrace,
\qquad y\in\R^n\setminus\overline{\Omega}
$$
for some constant $c>0$ independent of $y$,
as it will be shown later on.
In particular any Dirichlet condition $u=g$ in $\R^n\setminus\overline{\Omega}$
satisfying \eqref{gintro} below is admissible
in the representation formula \eqref{representation}. 
We prove the following

\begin{theo}\label{pointwise} Let $f\in C^\alpha(\overline{\Omega})$
for some $\alpha\in(0,1)$, 
$g:\R^n\setminus\overline{\Omega}\rightarrow\R$ be any measurable function 
satisfying\footnote{compare also with equation \eqref{g} below}
\begin{equation}\label{gintro}
\int_{\R^n\setminus\overline{\Omega}}|g(y)|\min\left\lbrace{\delta(y)}^{-s},{\delta(y)}^{-n-2s}\right\rbrace\;dy<+\infty
\end{equation}
and $h\in C(\partial\Omega)$.
Then, the function defined by setting
\begin{equation}\label{linearsol}
u(x)=
\left\lbrace\begin{array}{ll}
\displaystyle \int_\Omega f(y)\,G_\Omega(y,x)\;dy
-\int_{\R^n\setminus\overline{\Omega}}g(y)\,\Ds G_\Omega(x,y)\;dy
+\int_{\partial\Omega}h(\theta)\,M_\Omega(x,\theta)\;d\mathcal{H}(\theta), & x\in\Omega \\
g(x), & x\in\R^n\setminus\overline\Omega 
\end{array}\right.
\end{equation}
belongs to $C^{2s+\eps}_{loc}(\Omega)$, fulfills \eqref{gg}, and $u$ is the only pointwise solution of 
$$
\left\lbrace\begin{array}{ll}
\Ds u = f & \hbox{ in }\Omega, \\
u = g & \hbox{ in }\R^n\setminus\overline{\Omega,} \\
Eu = h & \hbox{ on }\partial\Omega.
\end{array}\right.
$$
Here $Eu$ has been defined in Proposition \ref{intparts-prop}.
Moreover, if $g\in C(\overline{V_\eps})$ for some $\eps>0$,
where $V_\eps=\{x\in\R^n\setminus\overline{\Omega}:\delta(x)<\eps\}$
and $h=0$, then $u\in C(\overline{\Omega})$.
\end{theo}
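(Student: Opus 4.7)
The natural strategy is to split the formula \eqref{linearsol} into its three linear pieces
$$u=u_f+u_g+u_h,$$
where $u_f(x)=\int_\Omega f(y)G_\Omega(x,y)\,dy$, $u_g(x)=-\int_{\R^n\setminus\overline\Omega}g(y)\Ds G_\Omega(x,y)\,dy$, and $u_h(x)=\int_{\partial\Omega}h(\theta)M_\Omega(x,\theta)\,d\mathcal{H}(\theta)$, and to analyse each term separately with respect to four items: interior regularity, the pointwise equation in $\Omega$, the external values, and the limit $E\cdot$ at $\partial\Omega$. Interior regularity comes from differentiating under the integral sign (the kernels $G_\Omega(x,\cdot)$, $\Ds_xG_\Omega(x,\cdot)$ and $M_\Omega(x,\cdot)$ are smooth away from $x$ or $\partial\Omega$) together with the H\"older hypothesis on $f$ and the integrability assumption \eqref{gintro}: standard fractional estimates then place each $u_f,u_g,u_h\in C^{2s+\eps}_{\rm loc}(\Omega)$.

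For the equation in $\Omega$, the defining property $\Ds_xG_\Omega(\cdot,y)=\delta_y$ (and $\Ds_xM_\Omega(\cdot,\theta)=0$, $\Ds_x\bigl(-\Ds_yG_\Omega\bigr)=0$) yields $\Ds u_f=f$, $\Ds u_g=0$, $\Ds u_h=0$ pointwise; justifying the interchange of $\Ds$ and the integrals is routine using the H\"older regularity of $f$ for $u_f$ and the integrability \eqref{gintro} for $u_g$. For the external datum, $G_\Omega$ and $M_\Omega$ vanish outside $\overline\Omega$ while $-\Ds G_\Omega(x,\cdot)$ behaves on $\R^n\setminus\overline\Omega$ as the Poisson kernel giving back $g$ in the limit $x\to\R^n\setminus\overline\Omega$; but since we \emph{define} $u(x)=g(x)$ there, this item is automatic.

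The heart of the matter is the trace condition $Eu=h$. One needs three calculations at any $\theta_0\in\partial\Omega$:
\begin{enumerate}
\item $\delta(x)^{1-s}u_f(x)\to 0$, which follows from the boundary estimate $|G_\Omega(x,y)|\leq C\,\delta(x)^s\delta(y)^s|x-y|^{2s-n}$ and the boundedness of $f$;
\item $\delta(x)^{1-s}u_g(x)\to 0$, which follows from the analogous boundary decay of the Poisson kernel together with \eqref{gintro};
\item $\delta(x)^{1-s}u_h(x)\Big/\!\int_{\partial\Omega}M_\Omega(x,\theta')\,d\mathcal{H}(\theta')\to h(\theta_0)$.
\end{enumerate}
Item (3) is the main obstacle. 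It amounts to showing that the family of probability measures on $\partial\Omega$
$$\mu_x(d\theta)=\frac{M_\Omega(x,\theta)\,d\mathcal{H}(\theta)}{\int_{\partial\Omega}M_\Omega(x,\theta')\,d\mathcal{H}(\theta')}$$
concentrates at $\theta_0$ as $x\to\theta_0$ from $\Omega$, so that $\int h\,d\mu_x\to h(\theta_0)$ for $h\in C(\partial\Omega)$. This should be proved in two moves: a uniform lower bound on $M_\Omega(x,\theta)$ for $\theta$ close to $\theta_0$ obtained from sharp Green function estimates (cf.\ Lemma~\ref{Eu}), and a uniform upper bound $M_\Omega(x,\theta)\leq C\,\delta(x)^s|x-\theta|^{-n}$ for $\theta$ away from $\theta_0$, after which an approximate-identity argument closes the claim.

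Once these three limits are established, the fact that $u\in C^{2s+\eps}_{\rm loc}(\Omega)$ satisfies \eqref{gg} follows (item (3) shows in particular that $\delta^{1-s}u$ has a continuous boundary trace, namely $h\cdot\int_{\partial\Omega}M_\Omega\,d\mathcal{H}$). Uniqueness is then immediate from the representation formula \eqref{representation} in Proposition \ref{intparts-prop}: a pointwise solution $w$ with $f=g=h\equiv 0$ satisfies $\Ds w=0$ in $\Omega$, $w=0$ in $\R^n\setminus\overline\Omega$ and $Ew=0$ on $\partial\Omega$, so \eqref{representation} gives $w\equiv 0$. Finally, the last statement follows by showing that when $h=0$ the remaining term $u_g$ extends continuously to $g$ on $\partial\Omega$, provided $g$ is continuous in a collar of $\partial\Omega$; this is a standard application of dominated convergence combined with the fact that $\int_{\R^n\setminus\overline\Omega}(-\Ds G_\Omega)(x,y)\,dy\to 1$ as $x\to\partial\Omega$, while $u_f(x)$ tends to $0$ by item (1) and the stronger bound $|u_f(x)|\leq C\delta(x)^s$.
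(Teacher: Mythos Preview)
Your decomposition $u=u_f+u_g+u_h$ matches the paper's, and your treatment of the trace $Eu=h$ (item~(3), via concentration of the measures $\mu_x$) is precisely Lemma~\ref{Eu}. The substantive divergence is in how you establish $\Ds u_g=0$ and $\Ds u_h=0$ in $\Omega$.

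You describe the interchange of $\Ds$ with the integral as ``routine''. The paper does \emph{not} attempt this interchange; it proves instead that $u_g$ and $u_h$ satisfy the mean value property of Definition~\ref{sharm-def}, which is equivalent to $s$-harmonicity but far easier to justify because the kernel $\eta_r(\cdot-x)$ is bounded on $\C B_r(x)$. For $u_g$ the paper proceeds by a four-step approximation (first $g$ a Riesz potential of a measure supported in $\C\Omega$, then $g$ smooth with compact support, then bounded, then general $g$ via monotone convergence of $g_N=g\wedge N$). For $u_h$ it approximates $h$ by interior data $f_\eps$ as in~\eqref{h-approx} and passes to the limit in the mean value formula~\eqref{meanform}. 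Your direct route is not obviously wrong, but it is not routine either: $g$ may blow up at $\partial\Omega$ (only~\eqref{gintro} is assumed), the Poisson kernel $-\Ds G_\Omega(x,y)$ is itself singular as $y\to\partial\Omega$, and computing $\Ds_x$ of $u_g$ requires controlling a double singular integral. The Martin kernel $M_\Omega(x,\theta)$ is likewise singular on the diagonal. The mean value approach sidesteps these difficulties entirely. For $u_f$ the paper also differs slightly: it writes $u_f=u_1-u_2$ with $u_1=\Gamma_s*(f\chi_\Omega)$ and $u_2$ the $s$-harmonic extension of $u_1|_{\C\Omega}$, reducing to the already-treated $u_g$ case rather than interchanging $\Ds$ with the Green integral.

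For uniqueness you invoke the representation formula~\eqref{representation} from Proposition~\ref{intparts-prop}. Watch for circularity: that proposition is proved in Section~\ref{liner-sec} using the machinery developed for Theorem~\ref{pointwise}. The paper instead appeals directly to the Maximum Principle (Lemma~\ref{maxprinc}), which is logically prior.
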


\begin{rmk}\rm Even if it is irrelevant to write $\int_{\R^n\setminus\Omega}$
or $\int_{\R^n\setminus\overline{\Omega}}$ in formula \eqref{linearsol}, with the latter notation
we would like to stress on the fact that the boundary $\partial\Omega$
plays an important role in this setting.
\end{rmk}

\begin{rmk}\rm Since $\partial\Omega\in C^{1,1}$, we will exploit the behaviour
of the Green function and the Poisson kernel
described by \cite[Theorem 2.10 and equation (2.13) resp.]{chen}:
there exist $c_1=c_1(\Omega,s)>0$, $c_2=c_2(s,\Omega)$ such that
\begin{equation}\label{est-poisson}
\frac{{\delta(x)}^s}{c_1{\delta(y)}^s\left(1+\delta(y)\right)^s{|x-y|}^n}
\ \leq\ -\Ds G_\Omega(x,y)\ \leq\ 
\frac{c_1{\delta(x)}^s}{{\delta(y)}^s\left(1+\delta(y)\right)^s{|x-y|}^n},
\quad x\in\Omega,\ y\in\R^n\setminus\overline\Omega,
\end{equation}
and
\begin{equation}\label{est-green}
\frac{1}{c_2\,|x-y|^n}\left(|x-y|^2\,\wedge\,\delta(x)\delta(y)\right)^s\ 
\leq\ G_\Omega(x,y)\ \leq\ 
\frac{c_2}{|x-y|^n}\left(|x-y|^2\,\wedge\,\delta(x)\delta(y)\right)^s\ 
\qquad x,y\in\Omega.
\end{equation}
\end{rmk}

\begin{rmk}[Construction of large $s$-harmonic functions]\label{largesharm}\rm 
The case when $f=0$ in Theorem \ref{pointwise} corresponds to 
$s$-harmonic functions: when $h\not\equiv 0$ then $u$ automatically 
explodes somewhere on the boundary (by definition of $E$), while
if $h\equiv 0$ then large $s$-harmonic functions can be built
as follows.
Take any positive $g$ satisfying \eqref{gintro} with
$$
\lim_{\delta(x)\downarrow 0}g(x)=+\infty
$$
and let $g_N=\min\{g,N\}$, $N\in\N$.
By Theorem \ref{pointwise}, the corresponding solutions $u_N\in C(\overline{\Omega})$.
In particular, $u_N=N$ on $\partial\Omega$ and by the Maximum Principle
${\{u_N\}}_N$ is increasing. Hypotheses \eqref{gintro} guarantees a uniform bound 
on ${\{u_N\}}_N$. Then 
$$
u_N(x)=-\int_{\R^n\setminus\Omega}g_N(y)\Ds G_\Omega(x,y)\;dy
$$
increases to a $s$-harmonic function $u$ such that
$$
\liminf_{\stackrel{\hbox{\scriptsize $x\in\Omega$}}{x\rightarrow\partial\Omega}}u(x)\geq
\liminf_{\stackrel{\hbox{\scriptsize $x\in\Omega$}}{x\rightarrow\partial\Omega}}u_N(x)=N,
\qquad \hbox{ for any }N\in\N.
$$
\end{rmk}

Next, in view of Theorem \ref{pointwise},
we introduce the test function space
$$
\T(\Omega)=\left\lbrace\phi\in C^\infty(\Omega):
\hbox{ there exists }\psi\in C^\infty_c(\Omega)
\hbox{ such that }
\left\lbrace\begin{array}{ll}
\Ds \phi=\psi & \hbox{ in }\Omega \\
\phi=0 & \hbox{ in }\R^n\setminus\overline\Omega \\
E\phi=0 & \hbox{ in }\partial\Omega
\end{array}\right.\right\rbrace.
$$
and starting from the integration by parts formula \eqref{byparts},
we introduce the following notion of weak solution  

\begin{defi}\label{weakdefiintro} Given three Radon measures $\lambda\in\mathcal{M}(\Omega)$, 
$\mu\in\mathcal{M}(\R^n\setminus\Omega)$ and $\nu\in\mathcal{M}(\partial\Omega)$, such that
$$
\int_\Omega\delta(x)^s\;d|\lambda|(x)<+\infty,\quad
\int_{\R^n\setminus\overline\Omega}\min\{\delta(x)^{-s},\delta(x)^{-n-2s}\}\;d|\mu|(x)<+\infty,
\quad |\nu|(\Omega)<+\infty
$$
we say that a function $u\in L^1(\Omega)$
is a solution of 
$$
\left\lbrace\begin{array}{ll}
\Ds u=\lambda & \hbox{ in }\Omega \\
u=\mu & \hbox{ in }\R^n\setminus\overline{\Omega} \\
Eu=\nu & \hbox{ on }\partial\Omega
\end{array}\right.
$$
if for every $\phi\in\T(\Omega)$ it is
\begin{equation}\label{weakdefi-eq}
\int_\Omega u(x)\Ds \phi(x)\;dx\ =\ \int_\Omega\phi(x)\;d\lambda(x)
-\int_{\R^n\setminus\overline{\Omega}}\Ds \phi(x)\;d\mu(x)+\int_{\partial\Omega}D_s\phi(\theta)\;d\nu(\theta).
\end{equation}
\end{defi}
The integrals in the definition are finite for any $\phi\in\T(\Omega)$
in view of Lemma \ref{testspace}.

Thanks to the representation formula \eqref{representation},
we prove

\begin{theo}\label{existence-weak2}
Given two Radon measures $\lambda\in\mathcal{M}(\Omega)$
and 
$\mu\in\mathcal{M}(\R^n\setminus\overline{\Omega})$ such that
$$
\int_\Omega G_\Omega(x,y)\;d|\lambda|(y)<+\infty,
\qquad\hbox{and}\qquad
-\int_{\R^n\setminus\overline{\Omega}}\Ds G_\Omega(x,y)\;d|\mu|(y)<+\infty,\qquad
\hbox{ for a.e. }x\in\Omega,
$$
and a Radon measure $\nu\in\mathcal{M}(\partial\Omega)$ such that $|\nu|(\partial\Omega)<+\infty$,
the problem
$$
\left\lbrace\begin{array}{ll}
\Ds u=\lambda & \hbox{ in }\Omega \\
u=\mu & \hbox{ in }\R^n\setminus\overline{\Omega} \\
Eu=\nu & \hbox{ on }\partial\Omega
\end{array}\right.
$$
admits a unique solution $u\in L^1(\Omega)$ in the weak sense.

In addition, we have the representation formula 
$$
u(x)=\int_\Omega G_\Omega(x,y)\;d\lambda(y)-\int_{\C\Omega}\Ds G_\Omega(x,y)\;d\mu(y)
+\int_{\partial\Omega}M_\Omega(x,y)\;d\nu(y)
$$
and
$$
\Arrowvert u\Arrowvert_{L^1(\Omega)}\leq C\left(
\Arrowvert{\delta(x)}^s\lambda\Arrowvert_{\mathcal{M}(\Omega)}+
\Arrowvert\min\{{\delta(x)}^{-s},{\delta(x)}^{-n-2s}\}\,\mu\Arrowvert_{\mathcal{M}(\R^n\setminus\overline{\Omega})}+
\Arrowvert\nu\Arrowvert_{\mathcal{M}(\partial\Omega)}\right).
$$
for some constant $C=C(n,s,\Omega)>0$.
\end{theo}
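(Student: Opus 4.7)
The plan is to establish existence by verifying that the function $u$ defined by the representation formula satisfies Definition \ref{weakdefiintro}, and to obtain uniqueness via a Stampacchia-type duality argument using the test function space $\T(\Omega)$.

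For the $L^1$ bound, I would apply Fubini to each of the three summands of $u$ and invoke the classical integral estimates deduced from \eqref{est-green} and \eqref{est-poisson}: namely $\int_\Omega G_\Omega(x,y)\,dx \leq C\,\delta(y)^s$ uniformly in $y\in\Omega$, $\int_\Omega |{-}\Ds G_\Omega(x,y)|\,dx \leq C\,\min\{\delta(y)^{-s},\delta(y)^{-n-2s}\}$ uniformly in $y\in\R^n\setminus\overline{\Omega}$, and $\int_\Omega M_\Omega(x,\theta)\,dx \leq C$ uniformly in $\theta\in\partial\Omega$. Pairing these with the respective measures and summing over the three terms yields the stated estimate; in particular, finiteness of $u(x)$ for almost every $x$ follows directly from the integrability hypotheses imposed on $\lambda$, $\mu$ and $\nu$.

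For the weak formulation, fix $\phi\in\T(\Omega)$ with $\Ds\phi=\psi\in C^\infty_c(\Omega)$. Theorem \ref{pointwise} applied to $\phi$ (with data $f=\psi$, $g=0$ and $h=0$) gives the representation $\phi(x)=\int_\Omega G_\Omega(x,y)\,\psi(y)\,dy$. Using the symmetry of $G_\Omega$, this immediately produces the two companion identities
$$
D_s\phi(\theta)=\int_\Omega M_\Omega(x,\theta)\,\psi(x)\,dx, \qquad \Ds \phi(y)=\int_\Omega \Ds_y G_\Omega(x,y)\,\psi(x)\,dx \text{ for } y\in\R^n\setminus\overline{\Omega},
$$
where the passage to the limit through the integral is justified by \eqref{est-green}, \eqref{est-poisson} and the compact support of $\psi$ inside $\Omega$. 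Substituting these expressions into the right-hand side of \eqref{weakdefi-eq} and exchanging the order of integration by Fubini (legitimate thanks to the assumed integrability of $\lambda$, $\mu$, $\nu$ together with the bounds above) collapses all three contributions to $\int_\Omega u\,\psi\,dx=\int_\Omega u\,\Ds\phi\,dx$, which is what we wanted.

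Uniqueness is then a standard duality argument. Given any $\psi\in C^\infty_c(\Omega)$, Theorem \ref{pointwise} supplies a $\phi$ solving $\Ds\phi=\psi$ in $\Omega$ with vanishing exterior and $E$-data; its smoothness on $\Omega$ follows from interior regularity of $\Ds$ applied to the representation $\phi=\int_\Omega G_\Omega(\cdot,y)\psi(y)\,dy$, so $\phi\in\T(\Omega)$. Inserting such $\phi$ into \eqref{weakdefi-eq} with $\lambda=\mu=\nu=0$ yields $\int_\Omega u\,\psi\,dx=0$ for every $\psi\in C^\infty_c(\Omega)$, hence $u=0$ a.e. The main technical obstacle I expect is justifying the limit exchange defining $D_s\phi(\theta)$ inside the $\psi$-integral and then the subsequent Fubini application on $\partial\Omega\times\Omega$: this requires producing a majorant uniform in $\theta\in\partial\Omega$, which is possible because $\mathrm{supp}\,\psi$ is a positive distance away from $\partial\Omega$, so that \eqref{est-green} gives $G_\Omega(y,x)/\delta(y)^s$ bounded on $\mathrm{supp}\,\psi$ as $y\to\theta$. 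The analogous manipulation for the exterior integral is controlled by \eqref{est-poisson} and the integrability assumption on $\mu$.
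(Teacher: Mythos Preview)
Your proposal is correct and follows essentially the same route as the paper: the existence argument is identical (Fubini plus the identities \eqref{repr-lapl} and \eqref{repr-deriv} from Lemma \ref{testspace}), and the $L^1$ estimate is the content of Theorem \ref{reg-l1sol}, which the paper proves by the same Fubini-and-kernel-bounds strategy you outline, only packaged via the auxiliary function $\zeta$ solving $\Ds\zeta=1$. The only cosmetic difference is uniqueness: the paper states it as a weak Maximum Principle (Lemma \ref{max-princweak2}), but the proof of that lemma is precisely your duality argument restricted to $\psi\geq 0$, so the two are the same.
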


We will conclude Section \ref{liner-sec} by showing that
\begin{prop} The weak solution of
$$
\left\lbrace\begin{array}{ll}
\Ds u(x)=\frac{1}{{\delta(x)}^\beta} & \hbox{ in }\Omega \\
u=0 & \hbox{ in }\R^n\setminus\overline{\Omega} \\
Eu=0 & \hbox{ on }\partial\Omega
\end{array}\right.
$$
satisfies 
\begin{eqnarray}
\displaystyle
c_1{\delta(x)}^s
\leq u(x)\leq
c_2{\delta(x)}^s & &
\hbox{ for }0<\beta<s, \nonumber \\ 
\displaystyle
c_3{\delta(x)}^s\log\frac{1}{\delta(x)}
\leq u(x)\leq
c_4{\delta(x)}^s\log\frac{1}{\delta(x)} & &
\hbox{ for }\beta=s, \label{udelta} \\
\displaystyle
c_5{\delta(x)}^{-\beta+2s}
\leq u(x) \leq
c_6{\delta(x)}^{-\beta+2s} & &
\hbox{ for }s<\beta<1+s. \nonumber
\end{eqnarray}
Moreover, there exist a constant $\underline{c}=\underline{c}(n,s,\Omega)>0$
and $\overline{c}=\overline{c}(n,s,\Omega)$
such that the solution of 
$$
\left\lbrace\begin{array}{ll}
\Ds u=0 & \hbox{ in }\Omega \\
u=g & \hbox{ in }\R^n\setminus\overline{\Omega} \\
Eu=0 & \hbox{ on }\partial\Omega
\end{array}\right.
$$
satisfies
\begin{equation}\label{uleqg}
\underline{c}\,\underline{g}(\delta(x)):=
\underline{c}\,\inf_{\delta(y)=\delta(x)}g(y)
\leq u(x)\leq 
\overline{c}\sup_{\delta(y)=\delta(x)}g(y)
=:\overline{c}\,\overline{g}(\delta(x))
\qquad x\in\Omega,
\end{equation}
for any $g$ satisfying \eqref{gintro} such that $\underline{g},\,\overline{g}$
are decreasing functions in $0^+$ and
$$
\lim_{t\downarrow 0}\underline{g}(t)=+\infty.
$$
\end{prop}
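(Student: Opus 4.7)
The plan is to deploy the representation formulas from Theorem \ref{existence-weak2} for both problems and then extract the claimed growth from careful integration against the Green function and Poisson kernel estimates \eqref{est-green}, \eqref{est-poisson}.

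For the first problem, one has
$$u(x) = \int_\Omega G_\Omega(x,y)\,\delta(y)^{-\beta}\,dy.$$
I would fix $x\in\Omega$ with $d:=\delta(x)$ small and split $\Omega = A_1\cup A_2$, where $A_1 = \{y\in\Omega:|x-y|\leq d/2\}$. On $A_1$ we have $\delta(y)\asymp d$ and $|x-y|^2\leq \delta(x)\delta(y)$, so \eqref{est-green} gives $G_\Omega(x,y)\asymp |x-y|^{2s-n}$, and the contribution is
$$\int_{A_1}G_\Omega(x,y)\,\delta(y)^{-\beta}\,dy \;\asymp\; d^{-\beta}\int_0^{d/2} r^{2s-1}\,dr \;\asymp\; d^{2s-\beta}.$$
The region $A_2$ I would treat by flattening $\partial\Omega$ near the boundary projection of $x$ and projecting onto the normal coordinate $r=\delta(y)$; integration in the tangential variables produces a factor $1/(d+r)$, reducing the main part of the $A_2$-contribution to
$$d^s\int_0^{d_0}\frac{r^{s-\beta}}{d+r}\,dr,$$
which via $r=dt$ evaluates to $\asymp d^s$ for $0<\beta<s$, to $\asymp d^s\log(1/d)$ for $\beta=s$, and to $\asymp d^{2s-\beta}$ for $s<\beta<1+s$. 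Combining with the $A_1$-contribution $d^{2s-\beta}$ yields the upper estimates in \eqref{udelta}; the matching lower bounds follow by restricting to $A_1$ (for $\beta\geq s$) or to a fixed compact subset of $\Omega$ on which $G_\Omega(x,y)\gtrsim d^s$ and $\delta(y)^{-\beta}\gtrsim 1$ (for $\beta<s$).

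For the second problem, Theorem \ref{existence-weak2} gives
$$u(x) = -\int_{\R^n\setminus\overline\Omega}\Ds G_\Omega(x,y)\,g(y)\,dy,$$
with kernel controlled two-sidedly by \eqref{est-poisson}. By the pointwise sandwich $\underline g(\delta(y))\leq g(y)\leq \overline g(\delta(y))$ and the monotonicity of the integral in $g$, it is enough to prove both directions with $g$ replaced by $\overline g\circ\delta$ or $\underline g\circ\delta$. Flattening the boundary near the projection of $x$ and integrating the tangential variables reduces the estimate to controlling
$$d^s\int_0^{\infty}\frac{\overline g(r)}{r^s(1+r)^s(d+r)}\,dr$$
(and the analogue for $\underline g$). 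I would split the integral at $r=d$: on $(0,d)$ use $d+r\asymp d$ together with integrability of $r^{-s}\,\overline g(r)$ from \eqref{gintro}; on $(d,\infty)$ use the monotonicity $\overline g(r)\leq \overline g(d)$ to pull $\overline g(d)$ outside and evaluate $\int_d^\infty dr/(r^s(1+r)^s(d+r))$ explicitly, getting an $O(d^{-s})$ contribution so that the product with $d^s$ is exactly of order $\overline g(d)$. The lower bound with $\underline g$ follows by the symmetric argument, using $\underline g(r)\geq \underline g(d)$ on $(0,d)$ and $\lim_{t\downarrow 0}\underline g(t)=+\infty$.

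The main obstacle is the second part: securing the constants $\overline c,\underline c$ depending only on $n,s,\Omega$ and not on $g$. This relies on the subtle interplay between the decreasing character of $\overline g,\underline g$ and the integrability condition \eqref{gintro}, which together must localize the one-dimensional integrals above near $r\asymp d$, where the values of $\overline g$ and $\underline g$ are comparable to $\overline g(d)$ and $\underline g(d)$ respectively. For the first problem, the three-way case analysis on $\beta$ versus $s$ is then essentially routine once the boundary-flattening integral has been carried out.
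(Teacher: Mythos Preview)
Your approach mirrors the paper's: both parts are handled via the representation formulas, the two-sided estimates \eqref{est-green}--\eqref{est-poisson}, and reduction to one-dimensional integrals in the normal coordinate. The paper carries out the computations first on the ball in explicit spherical coordinates and then transfers to general $C^{1,1}$ domains by a boundary diffeomorphism; your direct boundary-flattening is an equivalent route.

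There is one genuine slip in the lower bounds for \eqref{udelta}. When $\beta=s$, restricting to $A_1=\{|x-y|\le d/2\}$ yields only
\[
\int_{A_1}G_\Omega(x,y)\,\delta(y)^{-s}\,dy\ \asymp\ d^{-s}\int_0^{d/2}r^{2s-1}\,dr\ \asymp\ d^{s},
\]
which misses the factor $\log(1/d)$. That factor lives entirely in the $A_2$ contribution and requires a matching \emph{lower} bound on the tangential integral (the paper establishes exactly this, its claim \eqref{45}), after which $d^s\int_0^{d_0}(d+r)^{-1}\,dr$ produces the logarithm. Your $A_1$ argument suffices only for $\beta>s$; for $\beta=s$ you must also go through the $A_2$ lower bound.

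For Part~2 you have correctly located the crux. The $(d,\infty)$ half of the upper bound and the $(0,d)$ half of the lower bound go through as you describe. What does \emph{not} follow from integrability of $r^{-s}\overline g(r)$ alone is
\[
d^{s-1}\int_0^{d}r^{-s}\,\overline g(r)\,dr\ \lesssim\ \overline g(d);
\]
for instance with $\overline g(r)=r^{-\alpha}$, $0<\alpha<1-s$, the left-hand side equals $\overline g(d)/(1-s-\alpha)$, so the constant necessarily feels how close $g$ is to the borderline of \eqref{gintro}. The paper attacks this term by combining the monotonicity of $\overline g$ with the integrability of $r^{-s}\overline g$ through the auxiliary function $G(r)=r^{-s}\overline g(r)$, extracting a growth control on $G$ near $0$ and then bounding $\tfrac{1}{d}\int_0^d G$ by $G(d)$. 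This is precisely the ``subtle interplay'' you flag, and it is where the real work in proving \eqref{uleqg} lies.
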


\subsection*{The nonlinear problem}

We consider nonlinearities $f:\Omega\times\R\rightarrow\R$ 
satisfying hypotheses 
\begin{description}
\item[\it f.1)] $f\in C(\Omega\times\R)$, $f\in L^\infty(\Omega\times I)$ for any bounded $I\subseteq\R$
\item[\it f.2)] $f(x,0)=0$ for any $x\in\Omega$, and
$f(x,t)\geq 0$ for any $x\in\Omega,\,t>0$,
\end{description}
and all positive boundary data $g$ that satisfy \eqref{gintro}.

After having constructed large $s$-harmonic functions,
we first prove the following preliminary

\begin{theo}\label{nl-cs}
Let $f:\Omega\times\R\rightarrow\R$ be a function 
satisfying f.1).
Let $g:\R^n\setminus\overline{\Omega}\rightarrow\R$ be a measurable bounded function.
Assume the nonlinear problem
$$
\left\lbrace\begin{array}{ll}
\Ds u=-f(x,u) & \hbox{ in }\Omega \\
u=g & \hbox{ in }\R^n\setminus\overline{\Omega} \\
Eu=0 & \hbox{ on }\partial\Omega
\end{array}\right.
$$
admits a subsolution $\underline{u}\in L^1(\Omega)$ 
and a supersolution $\overline{u}\in L^1(\Omega)$ in the weak sense
$$
\left\lbrace\begin{array}{ll}
\Ds\underline{u}\leq -f(x,\underline{u}) & \hbox{ in }\Omega \\
\underline{u} \leq g & \hbox{ in }\R^n\setminus\overline{\Omega}\\
E\underline{u}=0 & \hbox{ on }\partial\Omega
\end{array}\right.
\qquad\hbox{and}\qquad
\left\lbrace\begin{array}{ll}
\Ds\overline{u}\geq -f(x,\overline{u}) & \hbox{ in }\Omega \\
\overline{u} \geq g & \hbox{ in }\R^n\setminus\overline{\Omega}\\
E\overline{u}=0 & \hbox{ on }\partial\Omega
\end{array}\right.
$$
i.e. for any $\phi\in\T(\Omega)$, $\Ds\phi|_\Omega\geq 0$, it is
$$
\int_\Omega\underline{u}\,\Ds\phi\leq
-\int_\Omega f(x,\underline{u})\,\phi
-\int_{\R^n\setminus\overline{\Omega}} g\,\Ds\phi,
\qquad
\int_\Omega\overline{u}\,\Ds\phi\geq
-\int_\Omega f(x,\overline{u})\,\phi
-\int_{\R^n\setminus\overline{\Omega}} g\,\Ds\phi.
$$
Assume also $\underline{u}\leq\overline{u}$ in $\Omega$, 
and $\underline{u},\overline{u}\in L^\infty(\Omega)\cap C(\Omega)$.
Then the above nonlinear problem has a weak solution $u$ 
in the sense of Definition \ref{weakdefiintro}
satisfying
$$
\underline{u}\leq u\leq\overline{u}.
$$
In addition,
\begin{itemize}
\item if $f$ is increasing in the second variable,
i.e. $f(x,s)\leq f(x,t)$ whenever $s\leq t$, for all $x\in\overline{\Omega}$,
then there is a unique solution,
\item if not, there is a unique minimal solution $u_1$, that is
a solution $u_1$ such that $\underline{u}\leq u_1\leq v$ for any other supersolution $v\geq\underline{u}$.
\end{itemize} 
\end{theo}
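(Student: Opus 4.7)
The plan is to adapt the classical sub/supersolution scheme to the present nonlocal setting by combining a truncation of $f$ with Schauder's fixed point theorem and the weak maximum principle. Concretely, define
$$
\hat f(x,t)=f\bigl(x,\operatorname{med}\{\underline u(x),\,t,\,\overline u(x)\}\bigr),
$$
which is continuous in $t$ and bounded on $\Omega\times\R$ by f.1) and the boundedness of $\underline u,\overline u$; moreover $\hat f=f$ on $[\underline u(x),\overline u(x)]$. Introduce $T\colon C(\overline\Omega)\to C(\overline\Omega)$ mapping $v$ to the unique weak solution (Theorem \ref{existence-weak2}) of
$$
\Ds u=-\hat f(x,v)\ \hbox{ in }\Omega,\qquad u=g\ \hbox{ in }\R^n\setminus\overline\Omega,\qquad Eu=0\ \hbox{ on }\partial\Omega;
$$
the integrability hypotheses of Theorem \ref{existence-weak2} hold since $\hat f$ and $g$ are bounded. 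The representation through $G_\Omega$ and $P_\Omega$ together with the estimates \eqref{est-poisson}--\eqref{est-green} yields uniform $L^\infty$ bounds and equicontinuity on $\overline\Omega$ for $T(v)$, so $T$ is continuous and compact from $K=\{v\in C(\overline\Omega):\underline u\le v\le\overline u\}$ into $C(\overline\Omega)$. Schauder's theorem then produces $u\in K$ with $T(u)=u$.

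The second step is to verify $\underline u\le u\le\overline u$, so that $\hat f(\cdot,u)=f(\cdot,u)$ and $u$ solves the original problem. On the set $A=\{u<\underline u\}\subset\Omega$ the truncation collapses to $\hat f(x,u)=f(x,\underline u(x))$; combining this with the subsolution inequality for $\underline u$ yields $\Ds(\underline u-u)\le 0$ weakly in $A$. Since $\underline u\le u$ on $\Omega\setminus A$ by definition and $\underline u\le g=u$ on $\R^n\setminus\overline\Omega$ by assumption, the weak maximum principle forces $|A|=0$, and the symmetric argument gives $u\le\overline u$. The main obstacle I anticipate is casting the ``on $A$'' argument in a rigorous weak form: this requires testing \eqref{weakdefi-eq} against a Stampacchia-type approximation of $(\underline u-u)^+$ inside $\T(\Omega)$ and exploiting the nonlocal Kato inequality $\Ds((\underline u-u)^+)\le \chi_A\Ds(\underline u-u)$.

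Uniqueness when $f(x,\cdot)$ is nondecreasing follows from the same maximum principle applied to $w=u_1-u_2$: the identity $\Ds w=f(x,u_2)-f(x,u_1)$ together with the sign condition $(f(x,u_1)-f(x,u_2))\operatorname{sgn}(u_1-u_2)\ge 0$ forces $w\equiv 0$. For the minimal solution in the general, non-monotone case I would set
$$
u_1(x)\ :=\ \inf\bigl\{v(x):v\hbox{ weak supersolution with }\underline u\le v\le\overline u\bigr\}.
$$
The crucial ingredient is the lattice property that the minimum of two supersolutions is a supersolution, which follows from the nonlocal computation
$$
\Ds\min\{v_1,v_2\}(x)\ =\ \Ds v_i(x)-\A(n,s)\int_{\R^n}\bigl(\min\{v_1,v_2\}-v_i\bigr)(y)\,|x-y|^{-n-2s}\,dy\ \ge\ \Ds v_i(x)
$$
at any point $x$ with $v_i(x)\le v_j(x)$, since the integrand is nonpositive. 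Combined with the $L^1$ estimate of Theorem \ref{existence-weak2}, this allows me to realize $u_1$ as the monotone limit of a decreasing sequence of supersolutions; passing to the limit in \eqref{weakdefi-eq} proves that $u_1$ is itself a solution, and its minimality among supersolutions $\ge\underline u$ is tautological.
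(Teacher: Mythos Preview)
Your approach is essentially the paper's: truncate $f$ between $\underline u$ and $\overline u$, apply Schauder's fixed point theorem to the solution map, then use the weak maximum principle on the set where the fixed point would violate the ordering; uniqueness and minimality follow by comparison and a Perron-type argument (the paper simply cites Cl\'ement--Sweers/\cite{KO-dupaigne} for the latter).

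Two points are worth noting. First, your function space $C(\overline\Omega)$ is slightly off: since $g$ is only assumed bounded and measurable, the $s$-harmonic part of $T(v)$ need not be continuous up to $\partial\Omega$, so equicontinuity on $\overline\Omega$ can fail. The paper sidesteps this by first subtracting the $s$-harmonic extension of $g$, reducing to $g\equiv 0$, and then running Schauder on $L^\infty(\Omega)$, where the compactness comes cleanly from the $C^s(\R^n)$ regularity of \cite{rosserra}. Second, the ``obstacle'' you anticipate for the ordering step---needing a Kato-type inequality to test against an approximation of $(\underline u-u)^+$---is not needed. The paper's device is simpler: on the open set $A=\{u>\overline u\}$ (respectively $\{u<\underline u\}$) one tests directly with $\phi\in\T(A)$, $\Ds\phi|_A=\psi\ge 0$; since the truncation collapses on $A$ one gets $\int_A\overline u\,\psi\ge\int_A u\,\psi$ for all such $\psi$, forcing $A=\varnothing$. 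This avoids any nonlocal Kato machinery altogether.
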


In case our boundary datum $g$ is a nonnegative bounded function,
then Theorem \ref{nl-cs} provides a unique solution,
since we may consider $\overline{u}=\sup g$
and $\underline{u}=0$.
Then we attack directly the problem with unbounded boundary values,
and we are especially interested in those data
exploding on $\partial\Omega$.
The existence of large $s$-harmonic functions
turns out to be the key ingredient to prove all the following theorems,
that is,

\begin{theo}[Construction of large solutions]\label{large-building}
Let $f:\Omega\times\R\rightarrow\R$ be a function 
satisfying f.1) and f.2). Then there exist $u,\,v:\R^n\rightarrow[0,+\infty]$
such that
$$
\left\lbrace\begin{array}{l}
\Ds u= -f(x,u) \quad \hbox{ in }\Omega \\ \displaystyle
\lim_{\stackrel{\hbox{\scriptsize $x\!\rightarrow\!\partial\Omega$}}{x\in\Omega}}u(x)=+\infty,
\end{array}\right.
\qquad\hbox{ and }\qquad
\left\lbrace\begin{array}{l}
\Ds v= f(x,v) \quad \hbox{ in }\Omega \\ \displaystyle
\lim_{\stackrel{\hbox{\scriptsize $x\!\rightarrow\!\partial\Omega$}}{x\in\Omega}}v(x)=+\infty.
\end{array}\right.
$$
\end{theo}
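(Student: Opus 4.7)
The plan is to construct both $u$ and $v$ as monotone limits of solutions to problems with truncated exterior data, using the bounded $s$-harmonic approximants $u_N$ from Remark \ref{largesharm} as building blocks and Theorem \ref{nl-cs} as the existence engine. Fix a positive $g$ on $\R^n\setminus\overline\Omega$ satisfying \eqref{gintro} and diverging at $\partial\Omega$ (e.g.\ $g(x)=\delta(x)^{-\alpha}$ for $\alpha\in(0,s)$, truncated at infinity), set $g_N:=\min\{g,N\}$, and let $u_N\in C(\overline\Omega)$ denote the $s$-harmonic function with exterior datum $g_N$ and $Eu_N=0$ produced by Theorem \ref{pointwise}. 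From $\int_{\R^n\setminus\overline\Omega} P_\Omega(x,y)\,dy=1$ (applied to the constant $1$) we get $0\leq u_N\leq N$ in $\Omega$ with $u_N|_{\partial\Omega}=N$, and $u_N\uparrow U$ pointwise in $\Omega$, where $U$ is the large $s$-harmonic function of Remark \ref{largesharm}.

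For $u$ (absorption), I apply Theorem \ref{nl-cs} with $\overline u:=u_N$ (a supersolution since $\Ds u_N=0\geq -f(\cdot,u_N)$ by f.2) and $\underline u:=0$ (a subsolution by f.2) to obtain the minimal weak solution $v_N$ with $0\leq v_N\leq u_N\leq N$, $v_N=g_N$ outside $\Omega$, $Ev_N=0$, $\Ds v_N=-f(\cdot,v_N)$. Monotonicity in $N$ follows by minimality: $v_{N+1}$ solves its own equation with $v_{N+1}=g_{N+1}\geq g_N$ outside, hence is a supersolution of the $N$-th problem dominating $0$, so $v_N\leq v_{N+1}$. Set $u:=\lim_N v_N\leq U<+\infty$ in $\Omega$. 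On any compact $K\Subset\Omega$ the sequence $\{v_N\}$ is uniformly bounded by $\sup_K U$, so $\{f(\cdot,v_N)\}$ is uniformly bounded by f.1); standard interior Hölder regularity for $\Ds$ then gives uniform $C^{2s+\alpha}_{\mathrm{loc}}(\Omega)$ estimates, and monotone convergence yields $\Ds u=-f(x,u)$ in $\Omega$. For the boundary blow-up, each $v_N\in C(\overline\Omega)$ with $v_N|_{\partial\Omega}=N$ by Theorem \ref{pointwise}: given $M$, pick $N>M$ and $\eps>0$ with $v_N\geq M$ on $\{\delta<\eps\}\cap\Omega$; then $u\geq v_N\geq M$ there, so $u(x)\to+\infty$ as $\delta(x)\to 0$.

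For $v$ (source), the scheme runs in parallel but the roles of sub- and supersolutions swap: $u_N$ is now a subsolution ($\Ds u_N=0\leq f(\cdot,u_N)$), and the core task is to construct a bounded supersolution $\overline v_N$. I build it via the Picard iteration
\[
\Ds \overline v_N^{(k+1)}=f(\cdot,\overline v_N^{(k)})\ \text{in }\Omega,\qquad \overline v_N^{(k+1)}=g_N\ \text{in }\R^n\setminus\overline\Omega,\qquad E\overline v_N^{(k+1)}=0,
\]
starting from $\overline v_N^{(0)}=u_N$. Since $u_N\leq N$ and $f$ is locally bounded in the second variable by f.1), the right-hand side $f(\cdot,u_N)$ is bounded, and the representation \eqref{representation} together with the Green estimate \eqref{est-green} produces a uniform $L^\infty$-bound for the first iterate. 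Bootstrapping this estimate --- using that each bounded iterate keeps $f$ bounded --- yields a monotone family whose limit $\overline v_N$ is a supersolution of $\Ds v=f(\cdot,v)$ with exterior datum $g_N$ and $E\overline v_N=0$. Theorem \ref{nl-cs}, applied with $f$ replaced by $-f$ so that the absorption form matches the source equation, then produces a weak solution $v_N$ with $u_N\leq v_N\leq\overline v_N$. Monotonicity in $N$ and the same continuity-at-$\partial\Omega$ argument used for $u$ yield $v:=\lim_N v_N$ satisfying $\Ds v=f(x,v)$ in $\Omega$; the blow-up is inherited from $v\geq u_N$ with $u_N|_{\partial\Omega}=N$ for all $N$.

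The main obstacle is precisely the supersolution construction in the source case: without a Keller--Osserman-type growth restriction on $f$, uniform boundedness of the Picard iterates $\overline v_N^{(k)}$ in $k$ is not automatic. The key is that at each level the datum driving $f$ is already controlled on the scale of $u_N\leq N$, and the mass of the Green kernel $G_\Omega(x,\cdot)$ in \eqref{est-green} is concentrated where the inductive bound is already in force; this should let the bound propagate uniformly in $k$, so the iteration closes. A secondary but essentially routine point is ensuring $v_N\leq v_{N+1}$ in the source case, again handled through minimality as in the absorption case.
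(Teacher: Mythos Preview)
Your treatment of the absorption case is essentially the paper's own proof (which defers to Theorem \ref{-sign}): truncate the exterior datum, use $0$ and the $s$-harmonic $u_N$ as sub- and supersolutions, and pass to the monotone limit. That part is fine.

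The source case, however, has a real gap, and it is exactly the one you flag but then try to talk your way past. Your Picard iteration does \emph{not} close without a growth condition on $f$. Write $\zeta(x)=\int_\Omega G_\Omega(x,y)\,dy$; this is a fixed positive function on $\Omega$ (the solution of $\Ds\zeta=1$), bounded below on any compact by a positive constant $c_0$. From $\overline v_N^{(k)}\le B_k$ you only get $\overline v_N^{(k+1)}\le N+\Arrowvert\zeta\Arrowvert_\infty\cdot\sup_{\Omega\times[0,B_k]}f$, so $B_{k+1}$ is governed by $f(B_k)$; for, say, $f(x,t)=e^t$ the sequence $B_k$ explodes. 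Your sentence about ``the mass of the Green kernel being concentrated where the inductive bound is already in force'' is not correct: $G_\Omega(x,\cdot)$ has fixed positive mass throughout $\Omega$, independent of $k$, so the bound cannot propagate uniformly in $k$ for general $f$.

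The paper avoids this obstacle by a different idea: instead of fixing $g$ in advance and trying to bound an iteration, it \emph{chooses} $g$ adapted to $f$. Take an increasing $F\ge f$ with $F(t)\ge t^{4s/(1-s)}$, set $I(x)=\A(n,s)\inf_{\delta(z)=\delta(x)}\int_{\C\Omega}|z-y|^{-n-2s}\,dy$, and put $g=(F^{-1}(I)-1)/\overline c$. Then the $s$-harmonic function $v_0$ with this datum satisfies $F(v_0+1)\le I(x)\le\Ds\chi_\Omega(x)$ in $\Omega$, so $\chi_\Omega$ is a supersolution of the reduced problem $\Ds w=f(x,v_0+w)$, $w=0$ outside, in one stroke, with no iteration needed. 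The freedom to pick $g$ is what makes an explicit, $f$-independent supersolution available; your scheme gives up that freedom and then cannot recover it.
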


Depending on the nature of the nonlinearity $f$
one can be more precise about the Dirichlet values of $u$.
Namely,

\begin{theo}[Damping term]\label{-sign}
Let $f:\Omega\times\R\rightarrow\R$ be a function 
satisfying f.1) and f.2), and
$g:\R^n\setminus\overline{\Omega}\rightarrow[0,+\infty]$ a measurable function
satisfying \eqref{gintro};
let also be $h\in C(\partial\Omega)$, $h\geq 0$.
The semilinear problem 
$$
\left\lbrace\begin{array}{ll}
\Ds u(x)=-f(x,u(x)) & \hbox{ in }\Omega \\
u=g & \hbox{ in }\R^n\setminus\overline{\Omega}\\
Eu=h & \hbox{ on }\partial\Omega
\end{array}\right.
$$
satisfies the following:
\begin{description}
\item[\it i)] if $h\equiv 0$, the equation has a weak solution for any admissible $g$,
\item[\it ii)] if $h\not\equiv 0$ then
\begin{itemize}
\item the problem has a weak solution if there exist $a_1,\,a_2\geq0$ and $p\in[0,\frac{1+s}{1-s})$
such that 
$$
f(x,t)\leq a_1+a_2{t}^p,\quad \hbox{ for }t>0,
$$
\item the problem doesn't admit any weak solution if there exist $b_1,T>0$
such that 
$$
f(x,t)\geq b_1{t}^{\frac{1+s}{1-s}},\quad \hbox{ for }t>T.
$$
\end{itemize}
\end{description}
If, in addition, $f$ is increasing in the second variable then the problem admits 
only one positive solution.
\end{theo}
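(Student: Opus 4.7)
My plan is to handle the four assertions separately but through a common strategy: combine the sub/supersolution method of Theorem \ref{nl-cs}, the linear representation formula from Theorem \ref{existence-weak2}, and the sharp boundary behavior of the Martin and Poisson kernels to reduce each claim to an integrability check against the weight $\delta^s$ appearing in the weak formulation \eqref{weakdefi-eq}.

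For part (i) I would first note that $\underline u\equiv 0$ is a subsolution (since $f(x,0)=0$ and $g,h\geq 0$) and that the natural supersolution is $\overline u$, the linear $s$-harmonic extension from Theorem \ref{existence-weak2} with $\overline u=g$ outside and $E\overline u=0$; since $f\geq 0$, $\Ds\overline u=0\geq-f(x,\overline u)$. If $g$ is bounded, Theorem \ref{nl-cs} applies directly. Otherwise I would truncate $g_N=\min\{g,N\}$, obtain minimal solutions $0\leq u_N\leq\overline u_N\leq\overline u$, and pass to the monotone limit using the Green/Martin representation together with dominated convergence, using \eqref{gintro} and \eqref{est-poisson} to ensure $\overline u\in L^1(\Omega)$ and $f(\cdot,u_N)\,\delta^s\in L^1(\Omega)$ uniformly in $N$.

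For the existence half of (ii) the same scheme applies, now with $\overline u$ equal to the linear solution having $\overline u=g$ outside and $E\overline u=h$. The single delicate point is that $\overline u$ blows up at $\partial\Omega$: by Theorem \ref{pointwise} and the Martin kernel contribution one has $\overline u(x)\lesssim \delta(x)^{-(1-s)}$. Under the growth hypothesis $f(x,t)\leq a_1+a_2 t^p$ this forces
\[
\int_\Omega f(x,\overline u(x))\,\delta(x)^s\,dx\ \lesssim\ \int_\Omega\bigl(1+\delta(x)^{-(1-s)p}\bigr)\,\delta(x)^s\,dx<+\infty
\]
precisely when $p<(1+s)/(1-s)$, which is exactly the integrability needed to pass to the weak limit of the family obtained by truncating both $g$ and $h$. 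The uniqueness assertion under monotone $f$ follows by a standard comparison argument applied to $w=u_1-u_2$: this difference satisfies $w=0$ outside, $Ew=0$, and $\Ds w$ of opposite sign to $w$, so testing against the Green representation yields $w\equiv 0$.

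The nonexistence statement in the second bullet of (ii) is where the sharp exponent $(1+s)/(1-s)$ is forced and will be the main technical hurdle. Assume by contradiction a weak solution $u$ exists; since $f\geq 0$, $u$ is $s$-superharmonic above the linear solution with the same $E$-trace $h$, so the representation formula of Theorem \ref{existence-weak2} together with the sharp lower bound on the Martin kernel yields $u(x)\geq c\,\delta(x)^{-(1-s)}$ on a one-sided tube around any $\theta_0\in\partial\Omega$ where $h(\theta_0)>0$. Inserting this into $f(x,u)\geq b_1 u^{(1+s)/(1-s)}$ gives $f(x,u(x))\geq c'\,\delta(x)^{-(1+s)}$ near $\partial\Omega$. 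Taking any nonnegative $\phi\in\T(\Omega)$ with $\Ds\phi$ supported in a ball touching that tube, Lemma \ref{testspace} provides $\phi(x)\asymp D_s\phi(\theta)\,\delta(x)^s$ near the boundary, whence
\[
\int_\Omega f(x,u)\,\phi(x)\,dx\ \geq\ c''\int_{\text{tube}}\delta(x)^{-1}\,dx=+\infty,
\]
contradicting the finiteness demanded by \eqref{weakdefi-eq}. The most delicate step will be to extract the lower bound $u\geq c\,\delta^{-(1-s)}$ from the weak formulation alone; I plan to recover it by subtracting a bounded linear correction and invoking the weak comparison principle built into Theorem \ref{existence-weak2}.
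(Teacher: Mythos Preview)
For the existence assertions your plan is essentially the paper's: truncate $g$, invoke Theorem~\ref{nl-cs} on each bounded problem, and pass to the monotone limit with the $s$-harmonic extension as supersolution. One technical point you gloss over: the paper shows the limit of the $u_{N}$ is a weak solution by proving separately that it is a subsolution (via Fatou) and a supersolution (via an exhaustion $\Omega_k\nearrow\Omega$ and test functions $\phi_k\in\T(\Omega_k)$), not simply by dominated convergence. More substantively, for $h\not\equiv0$ the paper does not ``truncate $h$'' (which is already continuous and bounded) but instead approximates the boundary datum by an interior forcing $f_r$ supported in $\{\delta<r\}$ via~\eqref{h-approx}, so that each approximate problem has $Eu_r=0$ and falls under part~(i); the Martin integral appears only in the limit $r\downarrow0$. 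Your phrase ``truncating $h$'' does not correspond to any clear approximation scheme, whereas the paper's device is precisely what reduces case~(ii) to case~(i). Your uniqueness sketch matches the paper's.

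Your nonexistence argument contains a sign error. Since $\Ds u=-f(x,u)\le0$, the solution $u$ is $s$-\emph{sub}harmonic, not superharmonic, and comparison with the linear solution $u_0$ having the same data yields $u\le u_0$, the opposite of what you claim. The lower bound $u\ge c\,\delta^{-(1-s)}$ near $\{h>0\}$ must instead come from the representation $u=u_0-\int_\Omega G_\Omega(\cdot,y)f(y,u(y))\,dy$: one shows that under the standing hypothesis $\int_\Omega f(\cdot,u)\,\delta^s<\infty$ of Definition~\ref{weakdefiintro} the Green potential satisfies $\delta(x)^{1-s}\int_\Omega G_\Omega(x,y)f(y,u(y))\,dy\to0$ at $\partial\Omega$ (this is the content of~\eqref{udelta} as $\beta\uparrow1+s$), so $u$ inherits the blow-up of the Martin part of $u_0$. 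With that secured, your divergence computation $\int_\Omega f(x,u)\,\delta^s\gtrsim\int_\Omega\delta^{-1}=\infty$ gives the correct contradiction. You seem aware of the difficulty in your final sentence, but the argument as written is backwards. (The paper's own proof section for this theorem treats only the existence assertions and does not spell out the nonexistence step.)
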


\begin{theo}[Sublinear source]\label{sublinear}
Let $f:\Omega\times\R\rightarrow\R$ be a function 
satisfying f.1) and f.2), and
$g:\R^n\setminus\overline{\Omega}\rightarrow[0,+\infty]$ a measurable function
satisfying \eqref{gintro};
let also be $h\in C(\partial\Omega)$, $h\geq 0$.
Suppose also that 
$$
f(x,t)\leq\Lambda(t),\quad\hbox{ for all }x\in\overline{\Omega},\,t\geq 0
$$ 
where $\Lambda(t)$ is concave and $\Lambda'(t)\xrightarrow[]{t\uparrow\infty}0$.
Then there exists a positive weak solution $u$
to the semilinear problem 
$$
\left\lbrace\begin{array}{ll}
\Ds u(x)=f(x,u(x)) & \hbox{ in }\Omega \\
u=g & \hbox{ in }\R^n\setminus\overline{\Omega}\\
Eu=h & \hbox{ on }\partial\Omega.
\end{array}\right.
$$
\end{theo}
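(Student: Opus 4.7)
The plan is to combine a sub-super solution method with the sublinear growth of $\Lambda$. First I would construct an explicit subsolution $\underline{u}$ as the weak solution, provided by Theorem \ref{existence-weak2}, of the purely linear problem $\Ds u=0$ in $\Omega$, $u=g$ outside, $Eu=h$ on $\partial\Omega$. Since $g,h\geq 0$, the representation formula in Theorem \ref{existence-weak2} together with positivity of $G_\Omega$, $-\Ds G_\Omega$ and $M_\Omega$ gives $\underline{u}\geq 0$; hypothesis \textit{f.2} then yields $\Ds\underline{u}=0\leq f(x,\underline{u})$, so $\underline{u}$ is a weak subsolution.

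Next I would build a supersolution exploiting sublinearity. Concavity of $\Lambda$ with $\Lambda'(\infty)=0$ implies that for every $\epsilon>0$ there is $C_\epsilon>0$ with $\Lambda(t)\leq\epsilon t+C_\epsilon$ for $t\geq 0$, and in particular $\Lambda(t)=o(t)$. Assuming first that $g\in L^\infty$ (so $\underline{u}$ is bounded by Theorem \ref{pointwise}), I would set $\overline{u}=\underline{u}+w_M$, where $w_M$ weakly solves
$$\Ds w=M \ \text{in}\ \Omega, \qquad w=0 \ \text{in}\ \R^n\setminus\overline\Omega, \qquad Ew=0,$$
so that $\|w_M\|_\infty\leq CM$ by Theorem \ref{existence-weak2}. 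For $M$ sufficiently large, sublinearity gives $\Lambda(\|\underline{u}\|_\infty+CM)\leq M$, hence $\Ds\overline{u}=M\geq\Lambda(\overline{u})\geq f(x,\overline{u})$. For general $g$ satisfying \eqref{gintro}, I would truncate to $g_N=\min\{g,N\}$, solve the corresponding bounded-data problem, and pass to the limit using monotonicity (comparison principle) and a uniform bound furnished by the sublinearity of $\Lambda$ combined with the boundary estimate \eqref{uleqg}.

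With $\underline{u}\leq\overline{u}$ at hand, I would run a Schauder-type fixed point argument on the order interval $\mathcal{K}=\{v\in L^1(\Omega):\underline{u}\leq v\leq\overline{u}\}$. The map $T:v\mapsto u$, where $u$ is the weak solution of $\Ds u=f(x,v)$, $u=g$, $Eu=h$, is well-defined by Theorem \ref{existence-weak2} (since $0\leq f(x,v)\leq\Lambda(\overline{u})$ is integrable against $G_\Omega$ by \eqref{est-green}), maps $\mathcal{K}$ into itself by comparison with $\underline{u}$ and $\overline{u}$, and is compact thanks to the smoothing of the Green operator quantified by \eqref{est-green}. Schauder's fixed point theorem then yields $u=T(u)\in\mathcal{K}$, which is the desired nonnegative weak solution.

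The main obstacle I anticipate lies in the supersolution construction when $g$ or $h$ forces $\underline{u}$ to be unbounded near $\partial\Omega$, typically of order $\delta^{s-1}$. The sublinearity $\Lambda(t)=o(t)$ is exactly the quantitative slack that permits solving a linear problem with right-hand side $\Lambda(\overline{u})$ while retaining the same boundary blow-up rate as $\underline{u}$. Care is required to verify that $\Lambda(\overline{u})$ belongs to the weighted space $L^1(\Omega,\delta^s\,dx)$ demanded by Theorem \ref{existence-weak2}, and that the truncation/limit procedure preserves simultaneously the exterior value $u=g$ and the boundary trace $Eu=h$; this is the core technical issue that separates the sublinear source theorem from the bounded-data case covered by Theorem \ref{nl-cs}.
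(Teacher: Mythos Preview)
Your subsolution is exactly right and matches the paper's choice. The gap is in the supersolution. Your candidate $\overline{u}=\underline{u}+w_M$ with $\Ds w_M=M$ requires $\Lambda(\overline{u})\leq M$ pointwise, which is impossible as soon as $\underline{u}$ is unbounded---and this happens whenever $h\not\equiv 0$ (the Martin-kernel term forces $\underline{u}\sim\delta^{s-1}$ by \eqref{chen-martin}) or $g$ blows up at $\partial\Omega$. Truncating $g$ to $g_N$ does nothing for the $h$-contribution, and even when $h\equiv 0$ you would still need a supersolution for the \emph{limit} problem in order to bound the sequence $u_N$ uniformly; that is precisely the object you have not constructed. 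The sentence ``a uniform bound furnished by the sublinearity of $\Lambda$ combined with \eqref{uleqg}'' is where the real work hides, and as written it is circular.

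The paper bypasses this with a different supersolution that uses $\Lambda'(\infty)=0$ more directly than the affine bound $\Lambda(t)\leq\eps t+C_\eps$. Replace $g$ by $g_m=\max\{g,m\}$ and consider, on $D_m=\{u\in L^1(\Omega):u\geq m\}$, the map
\[
\mathcal{K}u(x)=u_0^{(m)}(x)+\int_\Omega G_\Omega(x,y)\,\Lambda(u(y))\,dy,
\]
where $u_0^{(m)}$ is the $s$-harmonic function with data $g_m,h$ (so $u_0^{(m)}\geq m$, hence $\mathcal{K}(D_m)\subset D_m$). Concavity gives $|\Lambda(u)-\Lambda(v)|\leq\Lambda'(m)\,|u-v|$ on $D_m$, so $\mathcal{K}$ is a contraction with constant $\|\zeta\|_\infty\,\Lambda'(m)$, where $\zeta(y)=\int_\Omega G_\Omega(x,y)\,dx$. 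For $m$ large this is $<1$, and the fixed point $\overline{u}$ solves $\Ds\overline{u}=\Lambda(\overline{u})\geq f(x,\overline{u})$ with boundary data $g_m\geq g$ and $E\overline{u}=h$: a supersolution that already carries the correct singular behaviour at $\partial\Omega$, with no truncation-and-limit needed. With $\underline{u}\leq\overline{u}$ in hand the paper finishes by monotone iteration from $\underline{u}$; your Schauder argument on the order interval would work equally well at that final stage.
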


\begin{theo}[Superlinear source]\label{superlinear}
Let $f:\Omega\times\R\rightarrow\R$ be a function 
satisfying f.1) and f.2).
For $0<\beta<1-s$, consider problems
$$
(\star)\ \left\lbrace\begin{array}{ll}
\Ds u(x)=\lambda f(x,u(x)) & \hbox{ in }\Omega \\
u(x)={\delta(x)}^{-\beta} & \hbox{ in }\R^n\setminus\overline{\Omega},\\
Eu=0 & \hbox{ on }\partial\Omega.
\end{array}\right.\quad\quad
(\star\star)\ \left\lbrace\begin{array}{ll}
\Ds u(x)=\lambda f(x,u(x)) & \hbox{ in }\Omega \\
u(x)=0 & \hbox{ in }\R^n\setminus\overline{\Omega},\\
Eu=1 & \hbox{ on }\partial\Omega.
\end{array}\right.
$$

\underline{Existence}. If
there exist $a_1,\,a_2,\,T>0$ and $p\geq 1$, such that
$$
f(x,t)\leq a_1+a_2\,{t}^p,\qquad x\in\Omega,\ t>T.
$$
and $p\beta<1+s$, then there exists $L_1>0$ depending on $\beta$ and $p$ such that
problem $(\star)$ admits a weak solution $u\in L^1(\Omega)$ for any $\lambda\in[0,L_1]$.
Similarly, if $p(1-s)<1+s$, then there exists $L_2>0$ depending on $p$ such that
problem $(\star\star)$ admits a weak solution $u\in L^1(\Omega)$ for any $\lambda\in[0,L_2]$.

\underline{Nonexistence}. If
there exist $,b,\,T>0$ and $q>0$, such that
$$
b\,{t}^q\leq f(x,t),\qquad x\in\Omega,\ t>T.
$$
and $q\beta>1+s$, then problem $(\star)$ admits a weak solution only for $\lambda=0$.
Similarly, if $q(1-s)>1+s$, then problem $(\star\star)$ admits a weak solution only for $\lambda=0$.
\end{theo}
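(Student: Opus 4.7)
In both problems the linear solution $u_0$ (corresponding to $\lambda=0$), whose existence is given by Theorem \ref{existence-weak2}, provides a canonical reference. By \eqref{uleqg} one has $u_0\asymp\delta^{-\beta}$ for $(\star)$, while for $(\star\star)$ the limiting example $u_{1-s}$ in the introduction together with Remark \ref{largesharm} gives $u_0\asymp\delta^{s-1}$. Since $f\geq 0$, in both cases $\Ds u_0=0\leq\lambda f(x,u_0)$ with the prescribed outer and boundary data, so $u_0$ is a subsolution.

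\textbf{Existence.} I would run the iteration
$$
u_{n+1}=u_0+L^{-1}\bigl(\lambda f(\cdot,u_n)\bigr),
$$
where $L^{-1}$ is the solution operator of Theorem \ref{existence-weak2} with zero outer and boundary data. The condition $p\beta<1+s$ (resp.\ $p(1-s)<1+s$) is precisely what is needed to make the source admissible: if $v\lesssim\delta^{-\beta}$, then
$$
\int_\Omega\delta^s\,f(x,v)\,dx\leq C\int_\Omega\delta^s\bigl(1+\delta^{-p\beta}\bigr)\,dx=C\int_\Omega\delta^{s-p\beta}\,dx<+\infty
$$
exactly when $s-p\beta>-1$. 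Combining this integrability with the three-case estimate for $L^{-1}(\delta^{-p\beta})$ from the proposition immediately preceding the theorem, I would fix a threshold $L_1(\beta,p)>0$ so small that the iteration remains trapped between $u_0$ and a suitable upper envelope: a multiple of $u_0$ when $(p-1)\beta\leq 2s$, or a function of the form $C\delta^{-\gamma}$ for some $\gamma\in[\beta,(1+s)/p)$ otherwise. The monotone case ($f(x,\cdot)$ nondecreasing) yields convergence by monotone convergence; in general, Schauder's theorem applied to the compact map $u\mapsto u_0+L^{-1}(\lambda f(\cdot,u))$ on the trapped region delivers a fixed point, which is the desired weak solution. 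Problem $(\star\star)$ is treated identically with $\beta$ replaced by $1-s$.

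\textbf{Non-existence.} Suppose, towards a contradiction, that $u\in L^1(\Omega)$ is a weak solution of $(\star)$ for some $\lambda>0$. Since $f\geq 0$, the weak comparison principle built into Theorem \ref{existence-weak2} gives $u\geq u_0$ a.e.\ in $\Omega$; in particular $u\geq c\delta^{-\beta}$ near $\partial\Omega$, so $u>T$ there, and the lower bound on $f$ yields $f(x,u)\geq bu^q\geq bc^q\delta^{-q\beta}$. On the other hand, the weak formulation itself requires $\lambda f(\cdot,u)$ to satisfy the integrability condition of Theorem \ref{existence-weak2}, whence
$$
+\infty>\int_\Omega\delta^s\,f(x,u)\,dx\geq bc^q\lambda\int_{\{\delta<\varepsilon\}}\delta^{s-q\beta}\,dx,
$$
forcing $s-q\beta>-1$, i.e.\ $q\beta<1+s$, contradicting $q\beta>1+s$. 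Hence $\lambda=0$ is the only possibility. The same argument applies verbatim to $(\star\star)$ after replacing $\beta$ by $1-s$.

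\textbf{Main obstacle.} The delicate step is the existence argument for large $p$, specifically in the range where $(p-1)\beta>2s$ (still compatible with $p\beta<1+s$): the iterates develop a boundary singularity strictly worse than $u_0$, so the naive envelope $Cu_0$ ceases to be preserved, and one must iterate in a weighted space of functions with singularity of order $\delta^{-\gamma}$ for a carefully tuned $\gamma\in[\beta,(1+s)/p)$. The smallness of $\lambda$ must be quantified so that the iterated exponent stays below the integrability threshold $(1+s)/p$, which is exactly the critical value delimiting existence from non-existence.
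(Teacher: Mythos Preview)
Your proposal follows essentially the same route as the paper. Both arguments take the $s$-harmonic function $u_0$ induced by the boundary data as a subsolution (with $u_0\asymp\delta^{-\beta}$ by \eqref{uleqg}, resp.\ $u_0\asymp\delta^{s-1}$ via \eqref{chen-martin}), build a supersolution by exploiting the boundary asymptotics \eqref{udelta}, and obtain nonexistence from the failure of the weighted integrability $\int_\Omega\delta^s f(x,u)\,dx<\infty$. The paper's nonexistence argument phrases this as $\int_\Omega G_\Omega(x,y)f(y,u(y))\,dy=+\infty$, but since $G_\Omega(x,y)\gtrsim c_x\delta(y)^s$ this is exactly your condition.

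Two small differences are worth noting. First, for the existence step the paper constructs the supersolution directly as $\overline u=u_0+\zeta$ with $\Ds\zeta=\delta^{-\gamma}$ (so $\zeta\asymp\delta^{2s-\gamma}$ by \eqref{udelta}) and then invokes the sub/supersolution Lemma~\ref{sub+super}, rather than running the Picard iteration and appealing to Schauder; the two presentations are of course equivalent. Second, the paper's choice of exponent is $\gamma=p\beta$ when $(p-1)\beta\leq 2s$, and $\gamma=\max\{2sp/(p-1),\,\beta+2s+\varepsilon\}$ otherwise; your envelope $C\delta^{-\gamma}$ with $\gamma\in[\beta,(1+s)/p)$ encodes the same self-consistency constraint $\gamma\geq p\gamma-2s$, and your ``main obstacle'' paragraph identifies precisely the regime $(p-1)\beta>2s$ that makes the exponent bookkeeping delicate. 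In that regime the two-term ansatz $u_0+\zeta$ is more flexible than a pure power $C\delta^{-\gamma}$, so if you want to make your sketch rigorous it is cleaner to follow the paper's choice of supersolution there.
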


We finally note that, with the definition of weak solution we are dealing with, 
the nonexistence of a weak solution implies \it complete blow-up\rm, meaning that:

\begin{defi} If for any nondecreasing sequence ${\{f_k\}}_{k\in\N}$
of bounded functions such that $f_k\uparrow f$ pointwisely as $k\uparrow+\infty$,
and any sequence ${\{u_k\}}_{k\in\N}$ of positive solutions to 
$$
\left\lbrace\begin{array}{ll}
\Ds u_k=f_k(x,u_k) & \hbox{ in }\Omega \\
u_k=g & \hbox{ in }\R^n\setminus\overline{\Omega} \\
Eu_k=h & \hbox{ on }\partial\Omega, 
\end{array}\right.
$$
there holds 
$$
\lim_{k\uparrow+\infty}\frac{u_k(x)}{{\delta(x)}^s}=+\infty,\quad
\hbox{ uniformly in }\,x\in\Omega,
$$
then we say there is complete blow-up.
\end{defi}

\begin{theo}\label{complete-blowup} Let $f:\Omega\times\R\rightarrow\R$ be a function 
satisfying f.1) and f.2)
and $g:\R^n\setminus\overline{\Omega}\rightarrow[0,+\infty]$ a measurable function
satisfying \eqref{gintro}; let also be 
$h\in C(\partial\Omega)$, $h\geq 0$ and $\lambda\geq 0$.
If there is no weak solution to 
$$
\left\lbrace\begin{array}{ll}
\Ds u=f(x,u) & \hbox{ in }\Omega \\
u=g & \hbox{ in }\R^n\setminus\overline{\Omega} \\
Eu=h & \hbox{ on }\partial\Omega, 
\end{array}\right.
$$
then there is complete blow-up.
\end{theo}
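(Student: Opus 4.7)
The plan is to argue by contrapositive: assuming no weak solution exists and complete blow-up fails, construct a weak solution to the limit problem. Up to working with the minimal positive solutions $\tilde u_k$ of the $k$th truncated problem in place of $u_k$ (so that the sequence $\{\tilde u_k\}$ is monotone nondecreasing — any given $u_k$ dominates $\tilde u_k$ by minimality, hence complete blow-up for $\tilde u_k$ transfers to $u_k$), define $\tilde u(x):=\lim_k\tilde u_k(x)\in[0,+\infty]$. The existence of such minimal $\tilde u_k$ for bounded $f_k$ follows from a monotone iteration scheme started from the $s$-harmonic lift of $(g,h)$, in the spirit of Theorem \ref{nl-cs}.

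Suppose complete blow-up fails. Then there exist $M>0$, a subsequence $k_j\to\infty$ and points $x_j\in\Omega$ with $\tilde u_{k_j}(x_j)\leq M\,\delta(x_j)^s$. From the representation formula of Theorem \ref{existence-weak2} one has
$$
\tilde u_{k_j}(x_j)\ =\ \int_\Omega G_\Omega(x_j,y)\, f_{k_j}(y,\tilde u_{k_j}(y))\,dy\ +\ W(x_j),
$$
where $W(x):=-\int_{\R^n\setminus\overline\Omega}\Ds G_\Omega(x,y)\, g(y)\,dy+\int_{\partial\Omega}M_\Omega(x,\theta)\, h(\theta)\,d\mathcal{H}(\theta)\geq 0$ is the $k$-independent $s$-harmonic contribution from the data. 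The key quantitative step is the uniform Green-function lower bound
$$
G_\Omega(x,y)\ \geq\ c_0\,\delta(x)^s\,\delta(y)^s\qquad \hbox{ for all }x,y\in\Omega,
$$
for some $c_0=c_0(n,s,\Omega)>0$, which is extracted from \eqref{est-green} by bounding $|x-y|\leq\mathrm{diam}(\Omega)$ and treating separately the cases $|x-y|^2\leq\delta(x)\delta(y)$ and its complement. Dropping the nonnegative $W(x_j)$ and dividing by $c_0\,\delta(x_j)^s$ then gives
$$
\int_\Omega \delta(y)^s\, f_{k_j}(y,\tilde u_{k_j}(y))\,dy\ \leq\ M/c_0,
$$
uniformly in $j$.

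On the full-measure set $\{\tilde u<+\infty\}$, continuity of $f$ together with $f_k\uparrow f$ and $\tilde u_{k_j}\to\tilde u$ pointwise gives $f_{k_j}(y,\tilde u_{k_j}(y))\to f(y,\tilde u(y))$, and Fatou's lemma yields $\int_\Omega\delta(y)^s f(y,\tilde u(y))\,dy\leq M/c_0<+\infty$. This integrability is exactly the admissibility condition required to invoke Theorem \ref{existence-weak2} with $\lambda:=f(\cdot,\tilde u)\,dx$, $\mu:=g$ and $\nu:=h$, producing an $L^1(\Omega)$ weak solution $v$ of the limit problem. Passing to the limit term-by-term in the representation formula for $\tilde u_{k_j}$ (monotone convergence for $W$ is trivial; for the Green-potential term one uses $f_{k_j}(\cdot,\tilde u_{k_j})\to f(\cdot,\tilde u)$ with the newly established integrable majorant, up to a further subsequence) identifies $\tilde u=v$ and exhibits $\tilde u$ as a weak solution, contradicting the nonexistence hypothesis.

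The main obstacle is the uniform lower bound $G_\Omega(x,y)\geq c_0\,\delta(x)^s\delta(y)^s$: it is precisely this estimate that upgrades a pointwise bound on $\tilde u_{k_j}(x_j)/\delta(x_j)^s$ at a single point into an integral bound on $\delta(y)^s f_{k_j}(\cdot,\tilde u_{k_j})$ over all of $\Omega$, which is what powers the subsequent Fatou/dominated convergence machinery. Everything else is a combination of the representation formula from the linear theory developed in Section \ref{liner-sec} with monotone/Fatou-style limit arguments, following the Brezis--Cabr\'e strategy for complete blow-up transcribed to the $L^1$-Stampacchia fractional framework.
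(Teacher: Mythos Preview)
Your proof is correct and follows essentially the same strategy as the paper: both hinge on the Green-function lower bound $G_\Omega(x,y)\geq c_0\,\delta(x)^s\delta(y)^s$ derived from \eqref{est-green}, the passage to minimal solutions to secure monotonicity in $k$, and a Fatou/monotone-convergence limit to manufacture a weak solution from a bounded weighted integral $\int_\Omega \delta^s f_k(\cdot,\tilde u_k)$. The only organizational difference is that the paper separates the argument into two forward claims (first $\int_\Omega f_k\delta^s\to+\infty$, then $u_k(x)\geq c\,\delta(x)^s\!\int_\Omega f_k\delta^s$) and reduces nonhomogeneous data to the homogeneous case via the substitution $F(x,t)=f(x,u_0(x)+t)$, whereas you run the contrapositive in one pass and absorb the boundary contribution directly into the representation formula through $W$; neither rearrangement changes the substance.
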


\subsection*{Notations}

In the following we will always use the following notations:
\begin{description}
\item $\C\Omega$, when $\Omega\subseteq\R^n$ is open, for $\R^n\setminus\overline{\Omega}$,
\item $\delta(x)$ for $\dist(x,\partial\Omega)$ once $\Omega\subseteq\R^n$ has been fixed,
\item $\mathcal{M}(\Omega)$, when $\Omega\subseteq\R^n$, for the space of measures on $\Omega$,
\item $\mathcal{H}$, for the $n-1$ dimensional Hausdorff measure, 
dropping the ``$n-1$'' subscript whenever there is no ambiguity,
\item $f\wedge g$, when $f,g$ are two functions, for the function $\min\{f,g\}$,
\item $C^{2s+\eps}(\Omega)=\{v:\R^n\rightarrow\R\hbox{ and for any }K\hbox{ compactly supported
in }\Omega, \hbox{ there exists }
\alpha=\alpha(K,v)\hbox{ such that }v\in C^{2s+\alpha}(K) \}$.
\end{description}

\section{A mean value formula}\label{meanvalue-sec}

Definition \ref{sharm-def} of
$s$-harmonicity turns out to be equivalent to 
have a null fractional Laplacian.
Since we couldn't find a precise reference for this,
we provide a proof.
Indeed, on the one hand we have that
any function $u$ which is $s$-harmonic in an open set $\Omega$ solves
$$
\Ds u(x)=0\qquad\hbox{ in }\Omega,
$$
indeed condition \eqref{sharm-eq}
can be rewritten, using \eqref{cns2},
$$
\int_{\C B_r(x)}\frac{u(y)-u(x)}{|y-x|^n\,\left(|y-x|^2-r^2\right)^s}\;dy =0
\qquad \hbox{ for any }r\in(0,\dist(x,\partial\Omega))
$$
and therefore
$$
0=\lim_{r\downarrow 0}\int_{\C B_r(x)}\frac{u(y)-u(x)}{|y-x|^n\,\left(|y-x|^2-r^2\right)^s}\;dy
=PV \int_{\R^n}\frac{u(y)-u(x)}{|y-x|^{n+2s}}\;dy
=-\frac{\Ds u(x)}{\A(n,s)}.
$$
Indeed, by dominated convergence, far from $x$ it is
$$
\int_{\C B_1(x)}\frac{u(y)-u(x)}{|y-x|^n\,\left(|y-x|^2-r^2\right)^s}\;dy
\xrightarrow{r\downarrow 0}\int_{\C B_1(x)}\frac{u(y)-u(x)}{|y-x|^{n+2s}}\;dy.
$$
Now, any function $u$ $s$-harmonic in $\Omega$ is smooth in $\Omega$:
this follows from the representation through the Poisson kernel on balls,
given in Theorem \ref{pointwise},
and the smoothness of the Poisson kernel,
see formula \eqref{pois-ball} below.
Since $u$ is a smooth function, a Taylor expansion when $|y-x|<1$
$$
u(y)-u(x)=\langle\,\nabla u(x),y-x\,\rangle+\theta(y-x),
\quad\hbox{where}\quad|\theta(y-x)|\leq C\,|y-x|^2
$$
implies
\begin{eqnarray}
 & & \left|\int_{B(x)\setminus B_r(x)}\frac{u(y)-u(x)}{|y-x|^n\,\left(|y-x|^2-r^2\right)^s}\;dy-
\int_{B(x)\setminus B_r(x)}\frac{u(y)-u(x)}{\left|y-x\right|^{n+2s}}\;dy\right| \nonumber \\
 & & \leq \int_{B(x)\setminus B_r(x)}\frac{|\theta(y-x)|}{{|y-x|}^n}\left(\frac{1}{\left(|y-x|^2-r^2\right)^s}-\frac{1}{\left|y-x\right|^{2s}}\right)\;dy 
\nonumber \\
 & & \leq C\int_{B(x)\setminus B_r(x)}\frac{1}{{|y-x|}^{n-2+2s}}\left[\left(1-\frac{r^2}{|y-x|^2}\right)^{-s}-1\right]\;dy \nonumber \\
 & & \leq C\,s\int_{B(x)\setminus B_r(x)}\frac{r^2}{{|y-x|}^{n+2s}}\;dy=C\,s\,r^2\int_r^1\rho^{-1-2s}\;d\rho\leq \frac{C{r}^{2-2s}}{2}
\xrightarrow[r\downarrow 0]{}0. \nonumber
\end{eqnarray}

\begin{theo}\label{meanvalue} Let $u:\R^n\rightarrow\R$ a measurable function such that 
for some open set $\Omega\subseteq\R^n$ is $u\in C^{2s+\eps}(\Omega)$.
Also, suppose that
$$
\int_{\R^n}\frac{|u(y)|}{{1+|y|}^{n+2s}}\;dy<+\infty.
$$
Then for any $x\in\Omega$ and $r>0$ such that $\overline{B_r(x)}\subseteq\Omega$, one has
\begin{equation}\label{meanform}
u(x)\ =\ \int_{\C B_r}\eta_r(y)\:u(x-y)\;dy + \gamma(n,s,r)\Ds u(z), 
\quad\quad
\gamma(n,s,r)=\frac{\Gamma(n/2)}{2^{2s}\,\Gamma\left(\frac{n+2s}{2}\right)\,\Gamma(1+s)}\;r^{2s}
\end{equation}
for some $z=z(x,s)\in\overline{B_r(x)}$.
\end{theo}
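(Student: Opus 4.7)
}
The strategy is to apply the pointwise representation formula of Theorem \ref{pointwise} on the ball $B = B_r(x)\subseteq\Omega$, using the very function $u$ (restricted to $\C B$) as exterior Dirichlet datum, and then invoke an integral mean value theorem together with an explicit computation of a torsion-type integral.

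\smallskip

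\noindent\textbf{Step 1 (representation on $B_r(x)$).} By hypothesis, $u\in C^{2s+\alpha}(\overline{B_r(x)})$ for some $\alpha>0$, hence $\Ds u\in C^{\alpha'}(\overline{B_r(x)})$ for some $\alpha'>0$; moreover the tail condition $\int_{\R^n}|u(y)|(1+|y|)^{-n-2s}\,dy<\infty$ guarantees the absolute convergence of all integrals involving $u$ against the Poisson kernel of $B_r(x)$. Theorem \ref{pointwise} applied on $B_r(x)$, with interior datum $f:=\Ds u$, exterior datum $g:=u|_{\C B_r(x)}$, and $h\equiv 0$, together with the identification $-\Ds G_{B_r(x)}(x,y)=\eta_r(y-x)$ for $y\in\C B_r(x)$ (this is the very definition of the constant $c(n,s)$ in \eqref{cns}), yields
\begin{equation}\label{planrep}
u(x)\ =\ \int_{B_r(x)}G_{B_r(x)}(y,x)\,\Ds u(y)\;dy\ +\ \int_{\C B_r(x)}\eta_r(y-x)\,u(y)\;dy.
\end{equation}
Since $\eta_r$ is radial, the second integral is precisely $(\eta_r*u)(x)=\int_{\C B_r}\eta_r(y)u(x-y)\,dy$.

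\smallskip

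\noindent\textbf{Step 2 (mean value theorem for integrals).} The Green function satisfies $G_{B_r(x)}(\cdot,x)\ge 0$ on $B_r(x)$ and vanishes continuously at $\partial B_r(x)$, while $\Ds u$ is continuous on the closed ball by Step 1. Then the classical mean value theorem for integrals produces a point $z=z(x,r)\in\overline{B_r(x)}$ with
\begin{equation*}
\int_{B_r(x)}G_{B_r(x)}(y,x)\,\Ds u(y)\;dy\ =\ \Ds u(z)\,\int_{B_r(x)}G_{B_r(x)}(y,x)\;dy.
\end{equation*}

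\smallskip

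\noindent\textbf{Step 3 (identifying the constant).} The remaining integral is the value at $x$ of the solution of $\Ds w=1$ in $B_r(x)$ with $w\equiv 0$ in $\C B_r(x)$, i.e.\ the fractional torsion function of the ball at its center. By Getoor's formula $\Ds\bigl[(1-|y|^2)^s_+\bigr]=\tfrac{2^{2s}\Gamma(1+s)\Gamma((n+2s)/2)}{\Gamma(n/2)}$ in $B_1$, so by translation and scaling
\begin{equation*}
\int_{B_r(x)}G_{B_r(x)}(y,x)\;dy\ =\ \frac{\Gamma(n/2)}{2^{2s}\,\Gamma((n+2s)/2)\,\Gamma(1+s)}\;r^{2s}\ =\ \gamma(n,s,r).
\end{equation*}
Combining with \eqref{planrep} gives \eqref{meanform}.

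\smallskip

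\noindent The main (and only non-routine) points to check carefully are that the tail integrability condition on $u$ is actually the right condition to make the exterior Poisson integral in \eqref{planrep} absolutely convergent (via the bound \eqref{est-poisson}, using that $y\mapsto\eta_r(y-x)$ decays like $|y|^{-n-2s}$ at infinity) and that Theorem \ref{pointwise}, though stated for $f\in C^\alpha(\overline{\Omega})$ and $g$ satisfying \eqref{gintro}, really applies here after noticing that only local Hölder regularity of $\Ds u$ near $\overline{B_r(x)}$ and the tail condition on $u$ are used in its proof; the torsion-function identity in Step 3 is standard and can be quoted from \cite{landkof}.
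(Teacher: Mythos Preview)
Your argument is correct and is, at heart, the same as the paper's: both obtain the Green representation
\[
u(x)=\int_{B_r(x)}G_{B_r(x)}(y,x)\,\Ds u(y)\,dy+\int_{\C B_r(x)}\eta_r(y-x)\,u(y)\,dy,
\]
then apply the integral mean value theorem (using $G_{B_r(x)}(\cdot,x)\ge 0$ and continuity of $\Ds u$) and identify $\int_{B_r(x)}G_{B_r(x)}(y,x)\,dy$ with the torsion function at the center. The only real difference is packaging: the paper derives the representation on $B_r$ from scratch --- it writes $G_{B_r}(0,\cdot)=\Gamma_s-\Gamma_s*\eta_r$ explicitly, mollifies $u$ to meet the hypotheses of Proposition~\ref{integrbypartsform}, and passes to the limit --- whereas you invoke Theorem~\ref{pointwise} as a black box together with uniqueness (Lemma~\ref{maxprinc}) to identify $u$ with the represented solution.

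One caution: in the paper's logical order Theorem~\ref{meanvalue} precedes Theorem~\ref{pointwise}, and the proof of the third case of Theorem~\ref{pointwise} (the $h\not\equiv 0$ part) actually \emph{uses} \eqref{meanform}. Your application only needs the $h\equiv 0$ cases, whose proofs are independent of Theorem~\ref{meanvalue}, so there is no genuine circularity; but if you adopt this route you should say so explicitly, or else appeal directly to the representation formula in Lemma~3.1.2\,\textit{iii)} (which rests only on Proposition~\ref{integrbypartsform}) rather than to Theorem~\ref{pointwise} as stated.
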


\begin{tfa}\rm 
Suppose, without loss of generality, that $x=0$.
Let $v=\Gamma_s-H$, where
$\Gamma_s$ is the fundamental solution\footnote{one possible construction 
and the explicit expression of the fundamental solution
can be found in \cite[paragraph 2.2]{extension}} of the fractional Laplacian 
$$
\Gamma_s(x)=\frac{-\A(n,-s)}{{|x|}^{n-2s}}
$$
and $H$ solves in the pointwise sense
$$
\left\lbrace\begin{array}{ll}
\Ds H=0 & \hbox{ in }B_r \\
H=\Gamma_s & \hbox{ in }\C B_r.
\end{array}\right.
$$
We claim that $H$ satisfies equality
$$
H(x)=H_1(x):=\int_{\R^n}\Gamma_s(x-y)\,\eta_r(y)\;dy=(\Gamma_s*\eta_r)(x).
$$
Indeed
\begin{enumerate}
\item $H_1$ is $s$-harmonic in $B_r$ because $H_1=\Gamma_s*\eta_r$,
$\Ds\Gamma_s=\delta_0$ in the sense of distributions
and $\eta_r=0$ in $B_r$,
\item since (cf. Appendix in \cite[p. 399ss]{landkof})
$$
\int_{\R^n}\Gamma_s(x-y)\,\eta_r(y)\;dy=
\int_{\C B_r}\Gamma_s(x-y)\,\eta_r(y)\;dy=
\Gamma_s(x),\qquad |x|>r,
$$
then $H_1=\Gamma_s$ in $\C B_r$, as desired.
\end{enumerate}
Finally, as in 1., note that 
$$
\Ds H(x)=\eta_r(x),\quad\hbox{when }|x|>r.
$$
Since $u\in C^{2s+\eps}(\Omega)$, $\Ds u\in C(\Omega)\subseteq C(\overline{B_r})$,
see \cite[Proposition 2.4]{silvestre} or Lemma \ref{lem-H} below.
Mollify $u$ in order to obtain a sequence $\{u_j\}_j\subseteq C^\infty(\R^n)$.
Hence,
\begin{multline}\label{73}
\int_{B_r}v\cdot\Ds u_j=\int_{B_r}\Gamma_s\cdot\Ds u_j-\int_{B_r}H\cdot\Ds u_j=\\=
\int_{\R^n}\Gamma_s\cdot\Ds u_j-\int_{\R^n}H\cdot\Ds u_j
=u_j(0)-\int_{\C B_r}u_j\,\eta_r,
\end{multline}
where we have used the integration by parts formula \eqref{integrbypartsform}
and the definition of $\Gamma_s$.
On the one hand we have now that $\Ds u_j\xrightarrow[]{j\uparrow+\infty}\Ds u$ uniformly in $B_r$, since
\begin{multline*}
\sup_{x\in B_r}\left|\Ds v_j(x)-\Ds v(x)\right|=\sup_{x\in B_r}\left|\A(n,s)\,PV\int_{\R^n}\frac{v_j(x)-v_j(y)-v(x)+v(y)}{{|x-y|}^{n+2s}}\;dy\right|\leq\\
\leq \A(n,s)\Arrowvert u_j-u\Arrowvert_{C^{2s+\eps}(B_{r+\delta})}
\sup_{x\in B_r}\int_{B_{r+\delta}}\frac{dy}{{|x-y|}^{n-\eps}}\;+\A(n,s)\,\sup_{x\in B_r}
\int_{\R^n\setminus B_{r+\delta}}\frac{2\,\Arrowvert v_k-v\Arrowvert_{L^\infty(\Omega_k)}}{{|x-y|}^{n+2s}}\;dy,
\end{multline*}
while on the other hand
$$
u_j(0)\xrightarrow[]{j\uparrow+\infty} u(0),\qquad
\int_{\C B_r}u_j\,\eta_r\xrightarrow[]{j\uparrow+\infty}\int_{\C B_r}u\,\eta_r
$$
so that we can let $j\rightarrow+\infty$ in equality \eqref{73}.
Collecting the information so far, we have
\begin{equation}\label{mv-first}
u(0)=
\int_{\C B_r} u\:\eta_r + \int_{B_r} v\;\Ds u=
\int_{\C B_r} u\:\eta_r + \Ds u(z)\cdot\int_{B_r}v
\end{equation}
for some $|z|\leq r$, by continuity of $\Ds u$ in $\overline{B_r}$
and since $v>0$ in $\overline{B_r}$.
The constant $\gamma(n,s,r)$ appearing in the statement 
equals to $\int_{B_r}v$.

Let us compute $\gamma(n,s,r)=\int_{B_r}v$.
If we consider the solution $\varphi_\delta$ to
$$
\left\lbrace\begin{array}{ll}
\Ds\varphi_\delta=1 & \hbox{ in }B_{r+\delta}, \\
\varphi_\delta=0 & \hbox{ in }\C B_{r+\delta}.
\end{array}\right.
$$
and we apply formula \eqref{mv-first} to $\varphi_\delta$ in place of $u$ to entail
$$
\varphi_\delta(0)=
\int_{\C B_r} \varphi_\delta\,\eta_r + \Ds \varphi_\delta(z)\cdot\int_{B_r}v
=\int_{B_{r+\delta}\setminus B_r} \varphi_\delta\,\eta_r +\int_{B_r}v
$$
The solution $\varphi_\delta$ is explicitly known (see \cite[equation (1.4)]{rosserra} and references therein) 
and given by
$$
\varphi_\delta(x)=\frac{\Gamma(n/2)}{2^{2s}\,\Gamma\left(\frac{n+2s}{2}\right)\,\Gamma(1+s)}
\left((r+\delta)^2-|x|^2\right)^s.
$$
Hence, by letting $\delta\downarrow 0$,
$$
\gamma(n,s,r)=\int_{B_r}v=\lim_{\delta\downarrow 0}\varphi_\delta(0)=
\frac{\Gamma(n/2)}{2^{2s}\,\Gamma\left(\frac{n+2s}{2}\right)\,\Gamma(1+s)}\;r^{2s}.
$$
\end{tfa}

\begin{rmk}\rm
The asymptotics as $s\uparrow1$ of \eqref{meanform}
are studied in Appendix \ref{meanform-app}.
\end{rmk}

\section{Linear theory for the fractional Dirichlet problem}\label{liner-sec}

Assume $\Omega\subseteq\R^n$ is open and bounded, with $C^{1,1}$ boundary.

\subsection{Preliminaries on fractional Green functions, Poisson kernels and Martin kernels}

Consider the function $G_\Omega:\Omega\times\R^n\rightarrow\R$
built as the family of solutions to the problems
$$
\forall\;x\in\Omega\qquad
\left\lbrace\begin{array}{ll}
\Ds G_\Omega(x,\cdot)=\delta_x & \hbox{ in }\Omega, \\
G_\Omega(x,y)=0 & \hbox{ in }\C\Omega.
\end{array}\right.
$$
This function can be written as the sum
$$
G_\Omega(x,y)=\Gamma_s(y-x)-H(x,y),
$$
where $\Gamma_s$ is the fundamental solution to the fractional 
Laplacian,
and 
$$
\forall\;x\in\Omega\qquad
\left\lbrace\begin{array}{ll}
\Ds H(x,\cdot)=0 & \hbox{ in }\Omega, \\
H(x,y)=\Gamma_s(y-x) & \hbox{ in }\C\Omega.
\end{array}\right.
$$

\begin{lem}\label{lem-H} Fix $x\in\Omega$. Then $H(x,\cdot)\in C^{2s+\eps}(\Omega)\cap C(\overline{\Omega})$.
\end{lem}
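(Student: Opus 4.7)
The plan is to build $H(x,\cdot)$ as a monotone limit of solutions with bounded continuous exterior data, then derive its regularity from that representation, in three steps.

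\emph{(i) Construction.} For fixed $x\in\Omega$ the datum $\Gamma_s(\cdot-x)$ is continuous on $\C\Omega$ and satisfies $\int_{\R^n}|\Gamma_s(y-x)|/(1+|y|^{n+2s})\,dy<+\infty$ since $2s<n$. I truncate to $g_N:=\Gamma_s(\cdot-x)\wedge N$. By classical theory for the fractional Dirichlet problem with bounded continuous exterior data (Bogdan et al.~\cite{sharm}, Landkof~\cite{landkof}), one obtains $H_N\in C(\overline\Omega)$ that is $s$-harmonic in $\Omega$ in the sense of Definition~\ref{sharm-def} and equals $g_N$ on $\C\Omega$. By the maximum principle $\{H_N\}$ is monotone increasing, and the integrability above, combined with Poisson kernel bounds on $\Omega$, produces a uniform pointwise upper bound. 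Hence $H_N\uparrow H(x,\cdot)$ pointwise in $\Omega$, and the limit inherits the $s$-harmonic mean value property via monotone convergence.

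\emph{(ii) Interior regularity.} Fix $y_0\in\Omega$ and a ball with $\overline{B_r(y_0)}\subset\Omega$. Apply the explicit Poisson representation on balls (formula \eqref{pois-ball} below) to write
$$H(x,y)=\int_{\C B_r(y_0)}P_{B_r(y_0)}(y,z)\,H(x,z)\,dz \qquad\hbox{for }y\in B_r(y_0).$$
The kernel $P_{B_r(y_0)}(y,z)$ is smooth in $y$ on $B_{r/2}(y_0)$ uniformly in $z\in\C B_r(y_0)$, and the tail behaviour of the integrand is controlled by the uniform integrability in (i). Differentiating under the integral sign yields $H(x,\cdot)\in C^\infty_{\rm loc}(\Omega)$, in particular $H(x,\cdot)\in C^{2s+\eps}(\Omega)$.

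\emph{(iii) Boundary continuity.} Fix $\theta\in\partial\Omega$. Since $x\in\Omega$, $|\theta-x|\geq\delta(x)>0$, so $\Gamma_s(\cdot-x)$ is continuous in a neighborhood of $\theta$. For $\eps>0$ pick $\rho>0$ with $|\Gamma_s(y-x)-\Gamma_s(\theta-x)|<\eps$ whenever $|y-\theta|<\rho$. The $C^{1,1}$ regularity of $\partial\Omega$ lets one place an interior-tangent ball $B\subset\Omega$ at $\theta$; I would build barriers from the explicit Poisson kernel of $B$ that bound the contribution to $H_N$ of the far part $\{|z-\theta|\geq\rho\}$ of $g_N$ near $\theta$, while the near part contributes at most $\eps$ by continuity. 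Letting $y\to\theta$, then $N\to\infty$, then $\eps\downarrow 0$ gives $H(x,y)\to\Gamma_s(\theta-x)$, so $H(x,\cdot)\in C(\overline\Omega)$.

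The main obstacle is step (iii): constructing barriers adapted to the $C^{1,1}$ geometry of $\partial\Omega$ in order to promote continuity of the exterior datum into continuity up to the boundary. Steps (i) and (ii) are essentially routine once the Poisson representation on balls is in hand.
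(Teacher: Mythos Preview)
Your approach is correct but differs substantially from the paper's. The paper uses a subtraction trick: pick a cutoff $\kappa\in C^\infty(\R^n)$ with $\kappa\equiv 1$ on $\overline{\C\Omega}$ and $\kappa\equiv 0$ near $x$, set $\gamma_s(y)=\kappa(y)\Gamma_s(y-x)\in C^\infty(\R^n)$, and observe that $h:=H(x,\cdot)-\gamma_s$ solves $\Ds h=-\Ds\gamma_s$ in $\Omega$ with $h=0$ in $\C\Omega$. Since $\Ds\gamma_s\in L^\infty\cap C^\infty$ by \cite[Proposition 2.7]{silvestre}, one quotes interior Schauder estimates for $C^{2s+\eps}(\Omega)$ and the boundary regularity of \cite[Proposition 1.1]{rosserra} to get $h\in C^s(\R^n)$, hence $H(x,\cdot)\in C^s(\R^n)\subset C(\overline\Omega)$. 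This is a two-line reduction that yields a strictly stronger conclusion than $C(\overline\Omega)$ and avoids any barrier construction.

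Your route is more self-contained: it does not invoke \cite{rosserra}, and step~(ii) via the ball Poisson kernel is a clean way to get $C^\infty_{\rm loc}$. However, two remarks. First, your truncation in step~(i) is superfluous: since $x\in\Omega$, the datum $\Gamma_s(\cdot-x)$ is already bounded and continuous on $\C\Omega$ (indeed $|y-x|\geq\delta(x)>0$ there), so you can solve directly with $g=\Gamma_s(\cdot-x)$ and skip the monotone limit. Second, your step~(iii) is exactly the content of the paper's later Paragraph~\ref{cont-sharm-sec}, which uses the Poisson kernel estimate \eqref{est-poisson} rather than interior-tangent-ball barriers; that argument is short and would replace your barrier sketch cleanly. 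So your outline works, but the paper's subtraction trick is the more efficient path and delivers $C^s$ up to the boundary for free.
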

\begin{tfa}\rm
Take $r=r(x)>0$ such that $\overline{B_r(x)}\subseteq\Omega$
and $\kappa$ a cutoff function
$$
0\leq \kappa\leq 1\hbox{ in }\R^n, \quad
\kappa\in C^\infty(\R^n), \quad
\kappa=1 \hbox{ in }\overline{\C\Omega}, \quad
\kappa=0 \hbox{ in }\overline{B_r(x)}
$$
and define $\gamma_s(y):=\kappa(y)\Gamma_s(y-x)$:
$\gamma_s\in C^\infty(\R^n),\gamma_s=\Gamma_s$ in $\C\Omega$, $\gamma_s=0$ in $B_r(x)$.
Then establish the equivalent problem
$$
\forall\;x\in\Omega\qquad
\left\lbrace\begin{array}{ll}
\Ds H(x,\cdot)=0 & \hbox{ in }\Omega \\
H(x,y)=\gamma_s(y-x) & \hbox{ in }\C\Omega
\end{array}\right.
$$
and by setting $h(x,y):=H(x,y)-\gamma_s(x,y)$ we obtain
$$
\forall\;x\in\Omega\qquad
\left\lbrace\begin{array}{ll}
\Ds h(x,\cdot)=-\Ds\gamma_s & \hbox{ in }\Omega, \\
h(x,y)=0 & \hbox{ in }\C\Omega.
\end{array}\right.
$$
Note that $\Ds\gamma_s\in L^\infty(\R^n)\cap C^\infty(\R^n)$ (see \cite[Proposition 2.7]{silvestre}):
from this we deduce that $h(x,\cdot)\in C^{2s+\eps}(\Omega)\cap C^s(\R^n)$
and then also $H(x,\cdot)\in C^{2s+\eps}(\Omega)\cap C^s(\R^n)$.
\end{tfa}

\begin{lem} The function $G_\Omega(x,y)$ we have just obtained
satisfies the following properties:
\begin{description}
\item[\it i)] $G_\Omega$ is continuous in $\Omega\times\Omega$ except on the diagonal,
where its singularity is inherited by the singularity in $0$ of $\Gamma_s$,
\item[\it ii)] $\Ds G_\Omega(x,\cdot)\in L^1(\C\Omega)$ for any $x\in\Omega$,
\item[\it iii)] for any $u\in C^{2s+\eps}(\Omega)\cap L^\infty(\R^n)$
and $x\in\Omega$
\begin{equation}\label{green-repr}
u(x)=\int_\Omega\Ds u(y)\:G_\Omega(x,y)\;dy-\int_{\C\Omega}u(y)\:\Ds G_\Omega(x,y)\;dy,
\end{equation}
\item[\it iv)] $\Ds G_\Omega(x,y)$, $x\in\Omega,\,y\in\C\Omega$
is given by the formula
\begin{equation}\label{green-pois}
\Ds G_\Omega(x,y)=
-\A(n,s)\,\int_{\Omega}\frac{G_\Omega(x,z)}{|z-y|^{n+2s}}\;dz,
\end{equation}
\item[\it v)] $G_\Omega(x,y)\geq 0$ for a.e. $(x,y)\in\Omega\times\Omega$, $x\neq y$,
and $\Ds G_\Omega(x,y)\leq 0$ for any $x\in\Omega,\,y\in\C\Omega$,
\item[\it vi)] it is
\begin{eqnarray}
\Gamma_s(y-x)=-\int_{\C\Omega}\Gamma_s(y-z)\cdot\Ds G_\Omega(x,z)\;dz
\qquad \hbox{for }x\in\Omega\hbox{ and }y\in\C\Omega, \label{gammas-eq}\\
G_\Omega(x,y)-\Gamma_s(y-x)=-\int_{\C\Omega}\Gamma_s(y-z)\cdot\Ds G_\Omega(x,z)\;dz
\qquad \hbox{for }x\in\Omega\hbox{ and }y\in\Omega. 
\end{eqnarray}
\end{description}
\end{lem}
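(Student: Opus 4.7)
The plan is to prove the six properties in the order iv), ii), iii), i), v), vi), which exploits each piece as it becomes available.

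Property iv) is essentially built into the definition of $G_\Omega$. For $x\in\Omega$ and $y\in\C\Omega$, the function $z\mapsto G_\Omega(x,z)$ vanishes identically on a whole open neighborhood of $y$ (namely $\C\Omega$), so the principal value defining $\Ds$ at $y$ reduces to a genuine Lebesgue integral, yielding
$$
\Ds G_\Omega(x,y)=\A(n,s)\int_{\R^n}\frac{G_\Omega(x,y)-G_\Omega(x,z)}{|y-z|^{n+2s}}\,dz=-\A(n,s)\int_\Omega\frac{G_\Omega(x,z)}{|y-z|^{n+2s}}\,dz.
$$
The right-hand side is finite because the only singularity of $G_\Omega(x,\cdot)$ in $\Omega$ is the locally integrable $|y-x|^{-(n-2s)}$ rate inherited from $\Gamma_s$. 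Property ii) is then an immediate consequence of the Poisson-kernel bound \eqref{est-poisson}: the majorant $c_1\delta(x)^s\delta(y)^{-s}(1+\delta(y))^{-s}|x-y|^{-n}$ is integrable in $y$ on $\C\Omega$, with exponent $-s>-1$ near $\partial\Omega$ and polynomial decay at infinity.

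Property iii) is obtained from the integration by parts formula (Proposition~\ref{integrbypartsform}) taken with $v(\cdot)=G_\Omega(x,\cdot)$ for fixed $x\in\Omega$. Since $v$ has an interior singularity at $x$, I excise a small ball $B_\rho(x)\subset\Omega$, apply \eqref{intpartsprop} on $\Omega_\rho:=\Omega\setminus\overline{B_\rho(x)}$ where $v\in C^{2s+\eps}$ with $\Ds v\equiv 0$, and pass to the limit $\rho\downarrow 0$. The extra contributions produced by the excision vanish in the limit thanks to the rate $|y-x|^{-(n-2s)}$ of $\Gamma_s$ (integrable because $n-2s<n$), the boundedness of $u$, and the continuity of $u$ at $x$ provided by $u\in C^{2s+\eps}(\Omega)$. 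This yields the representation \eqref{green-repr}.

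Once iii) is available, the remaining properties follow. The decomposition $G_\Omega(x,y)=\Gamma_s(y-x)-H(x,y)$ together with Lemma~\ref{lem-H} gives i), once joint continuity of $H$ is derived from the symmetry $H(x,y)=H(y,x)$---itself a consequence of the symmetry of $G_\Omega$, obtained from iii) applied twice (or equivalently from a symmetrized integration by parts). For v), set $w=G_\Omega(x,\cdot)$: it satisfies $\Ds w=0$ on $\Omega\setminus\{x\}$, vanishes on $\C\Omega$, and is strictly positive on a small ball $B_\rho(x)$ where $\Gamma_s$ dominates the continuous $H(x,\cdot)$; the weak minimum principle applied on $\Omega\setminus\overline{B_\rho(x)}$ followed by $\rho\downarrow 0$ gives $G_\Omega\geq 0$, whence $\Ds G_\Omega(x,\cdot)\leq 0$ on $\C\Omega$ is immediate from iv). For vi), I plug into iii) the choice $u=\Gamma_s(y-\cdot)$ for $y\in\C\Omega$ (so that $\Ds u=0$ on $\Omega$) to obtain the first identity, and $u=H(x,\cdot)$ (so that $\Ds u=0$ on $\Omega$ with boundary values $\Gamma_s(\cdot-x)$) to obtain the second, combined with $G_\Omega(x,y)-\Gamma_s(y-x)=-H(x,y)$ and symmetry.

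The main obstacle is iii). The function $G_\Omega(x,\cdot)$ lies outside the admissibility class of Proposition~\ref{integrbypartsform}, both because of its interior singularity at $x$ and because its decay at infinity is only that of $\Gamma_s$; the cut-out/limit procedure must therefore be carried out with explicit control of the resulting singular integrals near $x$, and the precise asymptotic $\Gamma_s(y-x)\sim|y-x|^{-(n-2s)}$ is what makes all the boundary contributions vanish. A secondary technical point arises in vi): the function $u=\Gamma_s(y-\cdot)$ is not globally in $L^\infty(\R^n)$, but its singularity sits in $\C\Omega$ away from $\partial\Omega$ for $y$ interior to $\C\Omega$, and convergence of the relevant integrals on $\C\Omega$ follows from \eqref{est-poisson}, so iii) can be applied after a routine truncation.
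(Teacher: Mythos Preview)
Your proposal is correct and reaches all six conclusions, but your route for (iii) differs from the paper's and carries a small technical burden you should be aware of.

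For (iii), the paper avoids excision altogether by splitting $G_\Omega(x,\cdot)=\Gamma_s(\cdot-x)-H(x,\cdot)$: the singular piece $\Gamma_s$ is handled directly via its fundamental-solution identity $\int_{\R^n}\Gamma_s(\cdot-x)\,\Ds u=u(x)$ (after mollifying $u$, as in the proof of Theorem~\ref{meanvalue}), while the regular piece $H(x,\cdot)\in C^{2s+\eps}(\Omega)\cap C(\overline\Omega)$ is integrated by parts without any singularity; the $\C\Omega$ contributions then recombine because $H=\Gamma_s$ there and $\Ds H=-\Ds G_\Omega$ on $\C\Omega$. Your excision argument is sound in principle, but note that Proposition~\ref{integrbypartsform} as stated requires $v\equiv 0$ on the complement of the working domain, and $G_\Omega(x,\cdot)$ is \emph{not} zero on the removed ball $B_\rho(x)$; so you cannot simply invoke \eqref{intpartsprop} on $\Omega_\rho$. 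You would need to cut $G_\Omega$ off near $x$ and track the commutator with $\Ds$, which works (via the $|y-x|^{-(n-2s)}$ rate you mention) but is heavier than the paper's decomposition.

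The remaining differences are minor. For (ii) you quote the Chen--Song bound \eqref{est-poisson} directly, whereas the paper re-derives a $\delta(y)^{-s}$ estimate from $|G_\Omega(x,y)|\le C\delta(y)^s$ and formula (iv); both are fine. For (v) the paper says only ``apply Lemma~\ref{maxprinc} to $H(x,\cdot)$'', which is terse since $-G_\Omega=H-\Gamma_s$ is singular at $x$; your excision of $B_\rho(x)$ before invoking the maximum principle is in fact the clean way to make that step rigorous. For (vi) the paper applies (iii) once to $u=H(x,\cdot)$ and reads off both identities from $H=\Gamma_s$ on $\C\Omega$; your alternative $u=\Gamma_s(y-\cdot)$ for $y\in\C\Omega$ with truncation is an equally valid route.
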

\begin{tfa}\rm We prove all conclusions step by step.

\it Proof of ii). \rm
First of all, we use the estimate
$$
|h(x,y)|\leq C\Arrowvert\Ds\gamma_s(x,\cdot)\Arrowvert_\infty\;\delta(y)^s,
\hbox{ where }h\hbox{ solves }\forall\;x\in\Omega\quad
\left\lbrace\begin{array}{ll}
\Ds h(x,\cdot)=-\Ds\gamma_s & \hbox{ in }\Omega \\
h(x,y)=0 & \hbox{ in }\C\Omega
\end{array}\right.
$$
and $\gamma_s$ is a regularization of $\Gamma_s$ as in Lemma \ref{lem-H};
for the inequality we refer to \cite[Proposition 1.1]{rosserra}. We deduce that,
for $y$ sufficiently close to $\partial\Omega$, it is
$$
|G_\Omega(x,y)|=|\Gamma_s(y-x)-H(x,y)|=|\gamma_s(y-x)-H(x,y)|=|h(x,y)|
\leq C\Arrowvert\Ds\gamma_s(x,\cdot)\Arrowvert_\infty \delta(y)^s.
$$
Then, for $y\in\C\Omega$,
\begin{equation}\label{est-pois}
\begin{split}
|\Ds G_\Omega(x,y)|=\left|\A(n,s)\,PV\int_{\R^n}\frac{G_\Omega(x,z)-G_\Omega(x,y)}{|y-z|^{n+2s}}\;dz\right|
=\qquad\qquad\qquad\\
=\A(n,s)\,\int_{\Omega}\frac{G_\Omega(x,z)}{|y-z|^{n+2s}}\;dz\leq
C\int_{\Omega}\frac{\delta(z)^s}{|y-z|^{n+2s}}\;dz
\end{split}
\end{equation}
and
\begin{equation}
\begin{split}
\int_{\C\Omega}|\Ds G_\Omega(x,y)|\;dy
\leq C\int_{\C\Omega}\int_{\Omega}\frac{\delta(z)^s}{|y-z|^{n+2s}}\;dz\;dy\leq\qquad\qquad\qquad\\
\leq C\int_{\Omega}\delta(z)^s\int_{\delta(z)}^{+\infty}\rho^{-1-2s}\;d\rho\;dz\leq
\frac{C}{2s}\int_\Omega\frac{dz}{\delta(z)^s}<+\infty.
\end{split}
\end{equation}

\it Proof of iii). \rm 
The function $G_\Omega(x,y)=\Gamma_s(y-x)-H(x,y)$ can be 
integrate by parts against any $u\in C^{2s+\eps}(\R^n\setminus\Omega)\cap L^\infty(\R^n)$,
since, according to Lemma \ref{lem-H}, $H(x,\cdot)\in C^{2s+\eps}_{loc}(\Omega)\cap C(\overline{\Omega})$ 
and so we can apply Proposition \ref{integrbypartsform}. Hence,
\begin{equation*}
u(x)=\int_\Omega\Ds u(y)\:G_\Omega(x,y)\;dy-\int_{\C\Omega}u(y)\:\Ds G_\Omega(x,y)\;dy.
\end{equation*}
This formula is a Green's representation formula:
$G_\Omega$ is the Green function
while its fractional Laplacian is the Poisson kernel.

\it Proof of iv). \rm We point how the computation 
of $\Ds G_\Omega(x,y)$, $x\in\Omega,\,y\in\C\Omega$
reduces to a more readable formula:
\begin{equation*}
-\Ds G_\Omega(x,y)=\A(n,s)\,PV\int_{\R^n}\frac{G_\Omega(x,z)-G_\Omega(x,y)}{|z-y|^{n+2s}}\;dz
=\A(n,s)\,\int_{\Omega}\frac{G_\Omega(x,z)}{|z-y|^{n+2s}}\;dz,
\end{equation*}
by simply recalling that $G_\Omega(x,z)=0$, when $z\in\C\Omega$.

\it Proof of v). \rm $G_\Omega(x,y)\geq 0$ for $(x,y)\in\Omega\times\Omega$, $x\neq y$,
in view of Lemma \ref{maxprinc} below applied to the function $H(x,\cdot)$.
Also, from this and \eqref{green-pois} we deduce that
\begin{equation}\label{pois-pos}
-\Ds G_\Omega(x,y)=\A(n,s)\,
\int_{\Omega}\frac{G_\Omega(x,z)}{|z-y|^{n+2s}}\;dz\geq 0.
\end{equation}

\it Proof of vi). \rm It suffices to apply \eqref{green-repr}
to the solution $H(x,y)$ of
$$
\left\lbrace\begin{array}{ll}
\Ds H(x,\cdot)=0 & \hbox{ in }\Omega \\
H(x,y)=\Gamma_s(y-x) & \hbox{ in }\C\Omega
\end{array}\right.
$$
to infer 
\begin{eqnarray*}
 & \displaystyle\Gamma_s(y-x)\ =\ -\int_{\C\Omega}\Gamma_s(y-z)\cdot\Ds G_\Omega(x,z)\;dz
\qquad \hbox{for }x\in\Omega\hbox{ and }y\in\C\Omega, & \\
 & \displaystyle H(x,y)\ =\ -\int_{\C\Omega}\Gamma_s(y-z)\cdot\Ds G_\Omega(x,z)\;dz
\qquad \hbox{for }x\in\Omega\hbox{ and }y\in\Omega. &
\end{eqnarray*}
\end{tfa}

\begin{lem} For any $x\in\Omega,\,\theta\in\partial\Omega$ the function
$$
M_\Omega(x,\theta)=\lim_{\stackrel{\hbox{\scriptsize $y\in\Omega$}}{y\rightarrow\theta}}
\frac{G_\Omega(x,y)}{{\delta(y)}^s}
$$
is well-defined in $\Omega\times\partial\Omega$ and
for any $h\in C(\partial\Omega),\,\psi\in C^\infty_c(\Omega)$ one has
$$
\int_\Omega\left(\int_{\partial\Omega}M_\Omega(x,\theta)\,h(\theta)\;d\mathcal{H}(\theta)\right)\psi(x)\;dx
=\int_{\partial\Omega}h(\theta)\,D_s\phi(\theta)\;d\mathcal{H}(\theta),
$$
where
$$
D_s\phi(\theta)=\lim_{\stackrel{\hbox{\scriptsize $y\in\Omega$}}{y\rightarrow\theta}}
\frac{\phi(y)}{{\delta(y)}^s}
\qquad\hbox{ and }\qquad
\phi(y)=\int_\Omega G_\Omega(x,y)\,\psi(x)\;dx.
$$
\end{lem}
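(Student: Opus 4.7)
The plan is to first establish that the limit defining $M_\Omega(x,\theta)$ exists, and then derive the integral identity via Fubini and dominated convergence.

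For the existence of $M_\Omega$, fix $x\in\Omega$ and any $r\in(0,\delta(x))$. Consider $y\mapsto G_\Omega(x,y)$ on $\Omega\setminus\overline{B_r(x)}$: it vanishes on $\C\Omega$ and is $s$-harmonic on $\Omega\setminus\overline{B_r(x)}$ with bounded values along $\partial B_r(x)$. The boundary regularity theory of Ros-Oton and Serra for solutions to $\Ds w=f$ with zero exterior data on $C^{1,1}$ domains then yields $G_\Omega(x,\cdot)/\delta^s\in C^\alpha(\overline\Omega\setminus B_r(x))$ for some $\alpha>0$, so the limit
$$
M_\Omega(x,\theta)=\lim_{\stackrel{\hbox{\scriptsize $y\in\Omega$}}{y\to\theta}}\frac{G_\Omega(x,y)}{\delta(y)^s}
$$
exists for every $\theta\in\partial\Omega$ and depends continuously on $\theta$. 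A uniformity argument, based on choosing a single $r>0$ that works for all $x$ in a fixed compact $K\Subset\Omega$, shows the convergence is uniform in $x\in K$.

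For the identity, set $K=\mathrm{supp}\,\psi\Subset\Omega$. By the representation formula \eqref{green-repr}, $\phi(y)=\int_\Omega G_\Omega(x,y)\,\psi(x)\,dx$ is the pointwise solution to $\Ds\phi=\psi$ in $\Omega$, $\phi=0$ on $\C\Omega$, so the same Ros-Oton-Serra regularity gives $\phi/\delta^s\in C^\alpha(\overline\Omega)$ and hence $D_s\phi(\theta)$ is well defined and continuous in $\theta$. By Fubini,
$$
\frac{\phi(y)}{\delta(y)^s}=\int_\Omega \frac{G_\Omega(x,y)}{\delta(y)^s}\,\psi(x)\,dx.
$$
For $y$ sufficiently close to $\partial\Omega$, $|x-y|\geq\dist(K,\partial\Omega)/2$ whenever $x\in K$, so the Green estimate \eqref{est-green} furnishes the uniform bound $G_\Omega(x,y)/\delta(y)^s\leq C(K)$. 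Dominated convergence then gives
$$
D_s\phi(\theta)=\int_\Omega M_\Omega(x,\theta)\,\psi(x)\,dx,
$$
and multiplying by $h(\theta)$, integrating over $\partial\Omega$ and applying Fubini once more delivers the claimed formula.

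The main obstacle is the first step. Although \eqref{est-green} shows $G_\Omega(x,y)/\delta(y)^s$ is comparable to $\delta(x)^s/|x-y|^n$ near the boundary, it does not by itself imply convergence as $y\to\theta$. One has to invoke finer boundary regularity: either the $u/\delta^s$-H\"older regularity of Ros-Oton-Serra applied away from the pole $x$, or equivalent explicit Martin-kernel constructions of Bogdan-type for $C^{1,1}$ domains. Once convergence is available, the rest of the argument is a clean application of Fubini and the Lebesgue dominated convergence theorem.
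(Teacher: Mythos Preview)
Your argument is correct and takes a genuinely different route from the paper's. The paper does not pass to the limit $y\to\theta$ directly; instead it introduces an approximating family $f_\eps$ obtained by thickening $h$ into a thin collar $\{0<\delta<\eps\}$, sets $u_\eps(x)=\int_\Omega f_\eps(y)G_\Omega(x,y)\,dy$, and computes $\int_\Omega u_\eps\psi$ in two ways via the coarea formula, obtaining in the limit both sides of the identity. Your approach is more direct: Fubini gives $\phi(y)/\delta(y)^s=\int_\Omega \psi(x)\,G_\Omega(x,y)/\delta(y)^s\,dx$, the Green estimate \eqref{est-green} gives a uniform dominating function on $\mathrm{supp}\,\psi$, and dominated convergence plus one more Fubini finish the job. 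This is cleaner for the lemma itself; the paper's $f_\eps$ construction, on the other hand, is not wasted effort, since it is reused verbatim in the third case of the proof of Theorem~\ref{pointwise} to show that $\int_{\partial\Omega}M_\Omega(x,\theta)h(\theta)\,d\mathcal{H}(\theta)$ is $s$-harmonic.

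One small imprecision in your first step: the Ros-Oton--Serra result you cite is stated for $\Ds w=f$ in $\Omega$ with $w\equiv 0$ on all of $\C\Omega$, whereas $G_\Omega(x,\cdot)$ on the subdomain $\Omega\setminus\overline{B_r(x)}$ carries nonzero exterior data on $B_r(x)$. What you actually need is their \emph{local} boundary Krylov-type estimate (or, as you note at the end, the Bogdan/Chen--Song Martin-kernel construction), which applies because $G_\Omega(x,\cdot)$ is $s$-harmonic and bounded near $\partial\Omega$ and vanishes on $\C\Omega$. The paper has the same implicit dependence: it asserts continuity of $\psi_1(y)=\phi(y)/\delta(y)^s$ up to $\partial\Omega$ by citing the Green estimates in \cite{chen}, and in the final line writes the pointwise limit $\lim_{y\to\theta}G_\Omega(x,y)/\delta(y)^s$ without further comment. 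So on this delicate point the two proofs are on equal footing.
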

\begin{tfa}\rm
For a small parameter $\eps>0$ define $\Omega_\eps=\{x\in\overline{\Omega}:0\leq\delta(x)<\eps\}$:
associate at any $x\in\Omega$ a couple $(\rho,\theta)$ where $\rho=\delta(x)$
and $\theta\in\partial\Omega$ satisfies $|x-\theta|=\rho$: such a $\theta$
is uniquely determined for small $\eps$ since $\partial\Omega\in C^{1,1}$.
Take also $\varphi\in C^\infty(\R)$, with $\varphi(0)=1$ and supported in $[-1,1]$.
With a slight abuse of notation define
\begin{equation}\label{h-approx}
f_\eps(x)=f_\eps(\rho,\theta)=h(\theta)\,\frac{\varphi(\rho/\eps)}{K_\eps},
\quad K_\eps=\frac{1}{1+s}\int_0^\eps\varphi(r/\eps)\;dr.
\end{equation}
Consider the functions
$$
u_\eps(x)=\int_{\Omega}f_\eps(y)\,G_\Omega(x,y)\;dy,
$$
and any function $\psi\in C^\infty_c(\Omega)$:
\begin{eqnarray*}
\displaystyle \int_\Omega u_\eps\psi & = & 
\int_{\Omega}f_\eps(y)\left[\int_\Omega\psi(x)\,G_\Omega(x,y)\;dx\right]\;dy = 
\int_{\Omega_\eps}h(\theta(y))\,\frac{\varphi(\rho(y)/\eps)}{K_\eps}\left[\int_\Omega\psi(x)\,G_\Omega(x,y)\;dx\right]\;dy \\
& = & \int_{\Omega_\eps}h(\theta(y))\,\frac{\varphi(\rho(y)/\eps)\,{\delta(y)}^s}{K_\eps}
\left[\int_\Omega\psi(x)\,\frac{G_\Omega(x,y)}{{\delta(y)}^s}\;dx\right]\;dy.
\end{eqnarray*}
Note that the function $\psi_1(y)=\int_\Omega\psi(x)\,G_\Omega(x,y)\,{\delta(y)}^{-s}\;dx$ is 
continuous in $\overline{\Omega_\eps}$, as a consequence of the boundary
behaviour of $G_\Omega$, see \cite[equation (2.13)]{chen}.
Then
\begin{multline*} 
\int_{\Omega_\eps}h(\theta(y))\,\frac{\varphi(\rho(y)/\eps)\,{\delta(y)}^s}{K_\eps}\,\psi_1(y)\;dy=
\frac{1}{(1+s)\,K_\eps}\int_0^\eps\varphi(r/\eps)\left(\int_{\{{\delta}^{1+s}=r\}}h(\theta)\,\psi_1(r,\theta)\;d\mathcal{H}(\theta)\right)\;dr \\
\xrightarrow{\eps\downarrow0}\int_{\partial\Omega}h(\theta)\,\psi_1(0,\theta)\;d\mathcal{H}(\theta)
=\int_{\partial\Omega}h(\theta)\int_\Omega \psi(x)\lim_{y\rightarrow\theta}\frac{G_\Omega(x,y)}{{\delta(y)}^{s}}
\;dx\;d\mathcal{H}(\theta)
\end{multline*}
where the limit has been computed using the coarea formula.
Hence
\begin{equation}\label{000}
\int_\Omega u_\eps\psi\xrightarrow{\eps\downarrow0}\int_\Omega
\left(\int_{\partial\Omega}M_\Omega(x,\theta)\,h(\theta)\;d\mathcal{H}(\theta)\right)\psi(x)\;dx
\end{equation}
where
$$
M_\Omega(x,\theta)=\lim_{\stackrel{\hbox{\scriptsize $y\in\Omega$}}{y\rightarrow\theta}}\frac{G_\Omega(x,y)}{{\delta(y)}^{s}}.
$$
In addition,
\begin{eqnarray}
\int_\Omega u_\eps\psi & = & 
\int_{\Omega_\eps}h(\theta(y))\,\frac{\varphi(\rho(y)/\eps)}{K_\eps}\left[\int_\Omega\psi(x)\,G_\Omega(x,y)\;dx\right]\;dy 
=
\int_{\Omega_\eps}h(\theta(y))\,\frac{\varphi(\rho(y)/\eps)\,\delta(y)^s}{K_\eps\,\delta(y)^s}\,\phi(y)\;dy \nonumber \\
& \xrightarrow[]{\eps\downarrow 0} &
\int_{\partial\Omega}h(\theta)\,D_s\phi(\theta)\;d\mathcal{H}(\theta), \label{001}
\end{eqnarray}
again by applying the coarea formula as before.
So the limits \eqref{000} and \eqref{001} must coincide.
\end{tfa}

\begin{rmk}\rm The function $M_\Omega(x,\theta)$ we have just introduced
is closely related to the \it Martin kernel \rm 
$$
\widetilde M_\Omega(x,\theta)=\lim_{\stackrel{\hbox{\scriptsize $y\in\Omega$}}{y\rightarrow\theta}}
\frac{G_\Omega(x,y)}{G_\Omega(x_0,y)}.
$$
For this reason we borrow
the usual notation of the Martin kernel.
\end{rmk}

\begin{lem}\label{Eu} For any $h\in C(\partial\Omega)$ define
$$
u(x)=\int_{\partial\Omega}M_\Omega(x,\theta)h(\theta)\;d\mathcal{H}(\theta),
\qquad x\in\Omega.
$$
Then for any $\theta^*\in\partial\Omega$
$$
Eu(\theta^*):=\lim_{\stackrel{\hbox{\scriptsize $x\in\Omega$}}{x\rightarrow\theta^*}}
\frac{{\delta(x)}^{1-s}\,u(x)}{\int_{\partial\Omega} M_\Omega(x,\theta)\;d\mathcal{H}(\theta)}=h(\theta^*).
$$
\end{lem}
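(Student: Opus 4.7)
The plan is to treat the quotient appearing in the definition of $Eu(\theta^*)$ as a weighted average of $h$ against the Martin measure. Setting $I(x) := \int_{\partial\Omega} M_\Omega(x,\theta')\, d\mathcal{H}(\theta')$ and $d\mu_x(\theta) := M_\Omega(x,\theta)\, d\mathcal{H}(\theta)/I(x)$, one has $u(x)/I(x) = \int_{\partial\Omega} h\, d\mu_x$, so the heart of the matter is to show that the probability measure $\mu_x$ concentrates at $\theta^*$ as $x \to \theta^*$ from inside $\Omega$. The $\delta(x)^{1-s}$ factor in the definition of $E$ then plays the role of the renormalization that turns the divergent $I(x) \sim c_{\theta^*}\delta(x)^{s-1}$ into a positive finite limit, so that together with the concentration statement it yields exactly $Eu(\theta^*) = h(\theta^*)$.

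For the concentration, I would rely on the two-sided estimate \eqref{est-green}. Passing to the limit $y \to \theta$ from $\Omega$ inside that estimate gives the near-boundary asymptotic $M_\Omega(x,\theta) \asymp \delta(x)^s/|x-\theta|^n$ for $x \in \Omega$ close to $\partial\Omega$ and $\theta \in \partial\Omega$. Fix $r>0$ and set $V_r := \{\theta \in \partial\Omega : |\theta - \theta^*| < r\}$. For $\theta \in \partial\Omega \setminus V_r$ and $x$ sufficiently close to $\theta^*$ the bound $|x-\theta| \geq r/2$ gives $M_\Omega(x,\theta) \leq C(r)\delta(x)^s$, whence
$$\int_{\partial\Omega \setminus V_r} M_\Omega(x,\theta)\, d\mathcal{H}(\theta) = O\bigl(\delta(x)^s\bigr);$$
meanwhile, integrating the lower Martin-kernel bound over $V_r$ in local boundary coordinates (straightening $\partial\Omega$ near $\theta^*$) produces $\int_{V_r} M_\Omega(x,\theta)\, d\mathcal{H}(\theta) \asymp c_{\theta^*} \delta(x)^{s-1}$. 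Consequently $\mu_x(\partial\Omega \setminus V_r) = O(\delta(x)) \to 0$, which is the concentration.

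With concentration in hand, for any $\varepsilon > 0$ choose $r$ so that $|h(\theta) - h(\theta^*)| < \varepsilon$ on $V_r$; writing
$$u(x) - h(\theta^*) I(x) = \int_{\partial\Omega} \bigl(h(\theta) - h(\theta^*)\bigr) M_\Omega(x,\theta)\, d\mathcal{H}(\theta)$$
and splitting the integral between $V_r$ and its complement yields $|u(x) - h(\theta^*) I(x)| \leq \varepsilon I(x) + 2\Arrowvert h\Arrowvert_\infty \cdot O(\delta(x)^s)$. Dividing by $I(x)$ and letting $x \to \theta^*$ followed by $\varepsilon \to 0$ gives $u(x)/I(x) \to h(\theta^*)$, from which the prescribed $\delta(x)^{1-s}$ normalization produces the stated identity. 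The main obstacle is pinning down the sharp asymptotic $\delta(x)^{1-s} I(x) \to c_{\theta^*} \in (0,\infty)$ and matching it with the analogous leading behavior of $\delta(x)^{1-s} u(x)$: the qualitative comparison $I(x) \asymp \delta(x)^{s-1}$ is immediate from \eqref{est-green}, but the sharp constant requires a local flattening argument near $\theta^*$ and a reduction to the explicit half-space Martin kernel.
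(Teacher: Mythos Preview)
Your core argument is the paper's argument: both invoke the two-sided Martin estimate $M_\Omega(x,\theta)\asymp\delta(x)^s|x-\theta|^{-n}$ (recorded in the paper as \eqref{chen-martin}, obtained from \eqref{est-green}) and use it to show that the normalized average $u(x)/I(x)=\int_{\partial\Omega} h\,d\mu_x$ converges to $h(\theta^*)$. You phrase this as concentration of the probability measure $\mu_x$ via a near/far split on $\partial\Omega$; the paper instead bounds
\[
\Bigl|\frac{u(x)}{I(x)}-h(\theta^*)\Bigr|\ \le\ C\,\delta(x)\int_{\partial\Omega}\frac{|h(\theta)-h(\theta^*)|}{|x-\theta|^n}\,d\mathcal H(\theta)
\]
and then estimates that surface integral directly in local graph coordinates near $\theta^*$, using the modulus of continuity of $h$. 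The two are repackagings of the same computation.

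Your closing paragraph, however, worries about more than is needed. The paper never extracts a \emph{sharp} limit $\delta(x)^{1-s}I(x)\to c_{\theta^*}$, nor any reduction to a half-space model. It simply sets $L(x):=\delta(x)^{1-s}I(x)$, records from the Martin bound that $L$ is trapped between two positive constants, and then works with $\delta(x)^{1-s}u(x)/L(x)$, which is identically $u(x)/I(x)$. In other words, the $\delta(x)^{1-s}$ in the numerator of the stated expression for $Eu$ is there precisely to cancel the $\delta(x)^{s-1}$ growth of $I(x)$; once you have proved $u(x)/I(x)\to h(\theta^*)$ you are finished. The only ``local flattening'' the paper performs is the graph-coordinate estimate of the surface integral above, and that needs no sharp constants---just the crude bound $\int_{B_r'}(|x|^2+|\theta'|^2)^{-n/2}\,d\theta'\le C/|x|$.
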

\begin{tfa}\rm
Denote by
$$
L(x)={\delta(x)}^{1-s}\int_{\partial\Omega} M_\Omega(x,\theta)\;d\mathcal{H}(\theta),
\qquad x\in\Omega
$$
for which we have
$$
\int_{\partial\Omega}\frac{\delta(x)}{c\,{|x-\theta|}^n}\;d\mathcal{H}(\theta)
\leq L(x)\leq
\int_{\partial\Omega}\frac{c\,\delta(x)}{{|x-\theta|}^n}\;d\mathcal{H}(\theta),
$$
i.e. $L$ is a bounded quantity.
Indeed, referring to estimates on the Green function in \cite[equation (2.13)]{chen}, we have inequalities
\begin{equation}\label{chen-martin}
\frac{{\delta(x)}^s}{c\,{|x-\theta|}^n}\leq M_\Omega(x,\theta)\leq\frac{c\,{\delta(x)}^s}{{|x-\theta|}^n},
\qquad x\in\Omega,\theta\in\partial\Omega.
\end{equation} 
Thus,
\begin{multline*}
\left|\frac{{\delta(x)}^{1-s}\,u(x)}{L(x)}-h(\theta^*)\right|=
\left|\frac{{\delta(x)}^{1-s}}{L(x)}\int_{\partial\Omega}M_\Omega(x,\theta)h(\theta)\;d\mathcal{H}(\theta)
-h(\theta^*)\frac{{\delta(x)}^{1-s}}{L(x)}\int_{\partial\Omega}M_\Omega(x,\theta)\;d\mathcal{H}(\theta)\right|\leq\\
\leq \frac{{\delta(x)}^{1-s}}{L(x)}\int_{\partial\Omega}
M_\Omega(x,\theta)\left|h(\theta)-h(\theta^*)\right|\;d\mathcal{H}(\theta)
\leq C\delta(x)\int_{\partial\Omega}\frac{\left|h(\theta)-h(\theta^*)\right|}{{|x-\theta|}^n}\;d\mathcal{H}(\theta) 
\end{multline*}
Describe $\partial\Omega$ as a graph in a neighborhood of $0$,
i.e.
$$
\Gamma\subseteq\partial\Omega\;\hbox{ open},\quad
\Gamma\ni\theta=(\theta',\phi(\theta'))\hbox{ for some }\phi:B_r'\subseteq\R^{n-1}\rightarrow\R,\ \phi(0)=\theta^*,\ \phi\in C^{1,1}(B_r(0)).
$$
Let us now write, 
$$
\int_{\Gamma}\frac{\left|h(\theta)-h(\theta^*)\right|}{{|x-\theta|}^n}\;d\mathcal{H}(\theta)
\leq \int_{\Gamma}\frac{\left|h(\theta)-h(\theta^*)\right|}
{\left[|x-\theta^*|^2+|\theta^*-\theta|^2-2\langle x-\theta^*,\theta^*-\theta\rangle\right]^{n/2}}
\;d\mathcal{H}(\theta)
$$
where
$$
2\langle x-\theta^*,\theta^*-\theta\rangle=2|x-\theta^*|\cdot|\theta^*-\theta|
\left\langle\frac{x-\theta^*}{|x-\theta^*|},\frac{\theta^*-\theta}{|\theta^*-\theta|}\right\rangle
\leq 2\mu|x-\theta^*|\cdot|\theta^*-\theta|,\quad\theta\in\Gamma
$$
for some $\mu<1$. Then
$$
\int_{\Gamma}\frac{\left|h(\theta)-h(\theta^*)\right|}{{|x-\theta|}^n}\;d\mathcal{H}(\theta)\leq
\frac{1}{(1-\mu)^{n/2}}\int_\Gamma\frac{\left|h(\theta)-h(\theta^*)\right|}
{\left[|x-\theta^*|^2+|\theta^*-\theta|^2\right]^{n/2}}\;d\mathcal{H}(\theta)
$$
Suppose without loss of generality that $\theta^*=0$ and denote by $\omega$ the modulus of continuity of $h$:
\begin{multline*}
\int_{\Gamma}\frac{\left|h(\theta)-h(\theta^*)\right|}{{|x-\theta|}^n}\;d\mathcal{H}(\theta)\leq
\frac{\sup_\Gamma\omega(|\theta|)}{(1-\mu)^{n/2}}\int_{B_r'}\frac{d\theta'}{\left[|x|^2+|\theta'|^2+|\phi(\theta')|^2\right]^{n/2}} \leq \\
\leq  
\frac{\sup_\Gamma\omega(|\theta|)}{(1-\mu)^{n/2}}\int_0^r\frac{\rho^{n-1}\;d\rho}{\left[|x|^2+\rho^2\right]^{n/2}} 
\leq
\frac{\sup_\Gamma\omega(|\theta|)}{(1-\mu)^{n/2}}\int_0^r\frac{d\rho}{|x|^2+\rho^2}
\leq\frac{\sup_\Gamma\omega(|\theta|)}{|x|\,(1-\mu)^{n/2}}
\end{multline*}
and therefore
\begin{multline*}
\delta(x)\int_{\partial\Omega}\frac{\left|h(\theta)-h(\theta^*)\right|}{{|x-\theta|}^n}\;d\mathcal{H}(\theta)=\\=
\delta(x)\int_{\Gamma}\frac{\left|h(\theta)-h(\theta^*)\right|}{{|x-\theta|}^n}\;d\mathcal{H}(\theta)+
\delta(x)\int_{\partial\Omega\setminus\Gamma}\frac{\left|h(\theta)-h(\theta^*)\right|}{{|x-\theta|}^n}\;d\mathcal{H}(\theta)
\leq\\\leq 
\frac{\sup_\Gamma\omega(|\theta|)}{(1-\mu)^{n/2}}+o(\delta(x)).
\end{multline*}
Now, since $\sup_\Gamma\omega(|\theta|)\leq\omega(\hbox{diam}\Gamma)$,
where diam$\Gamma=\sup_{\theta\in\Gamma}|\theta|$, and $\Gamma$ is arbitrary,
we deduce
$$
\delta(x)\int_{\partial\Omega}\frac{\left|h(\theta)-h(\theta^*)\right|}{{|x-\theta|}^n}\;d\mathcal{H}(\theta)
\xrightarrow[x\rightarrow\theta^*]{}0.
$$
\end{tfa}

\subsection{Linear theory for smooth data: proof of Theorem \ref{pointwise}}

We start by stating

\begin{lem}[Maximum Principle]\label{maxprinc} 
Let $u:\R^n\rightarrow\R$ be a function in
$C^{2s+\eps}(\Omega)\cap C(\overline{\Omega})$,
and
$$
\int_{\R^n}\frac{|u(y)|}{1+|y|^{n+2s}}\;dy<+\infty,\qquad\hbox{and}\qquad
\left\lbrace\begin{array}{ll}
\Ds u\leq 0 & \hbox{ in }\Omega \\
u\leq 0 & \hbox{ in }\C\Omega.
\end{array}\right.
$$
Then $u\leq 0$ in $\Omega$.
\end{lem}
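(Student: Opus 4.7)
The plan is a contradiction argument based on the mean value formula from Theorem \ref{meanvalue}, which already provides the natural sub-mean-value inequality for subsolutions.

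First I would observe that for any $x\in\Omega$ and any $r\in(0,\delta(x))$, formula \eqref{meanform} applied to $u$ gives $u(x)-\int_{\C B_r(x)}\eta_r(y-x)u(y)\,dy=\gamma(n,s,r)\,\Ds u(z)$ for some $z\in\overline{B_r(x)}\subseteq\Omega$, so the hypothesis $\Ds u\leq 0$ in $\Omega$ yields the pointwise inequality
\begin{equation*}
u(x)\ \leq\ \int_{\C B_r(x)}\eta_r(y-x)\,u(y)\,dy,\qquad\hbox{for every }x\in\Omega,\ r\in(0,\delta(x)).
\end{equation*}
Now assume by contradiction that $M:=\max_{\overline\Omega}u>0$ (the maximum exists since $u\in C(\overline\Omega)$ and $\overline\Omega$ is compact). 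Splitting the integral on $\Omega\setminus B_r(x)$ (where $u\leq M$) and on $\C\Omega$ (where $u\leq 0$), and using \eqref{cns} together with $\eta_r\equiv0$ on $B_r(x)$, the inequality rewrites as
\begin{equation*}
u(x)\ \leq\ M\Bigl(1-\textstyle\int_{\C\Omega}\eta_r(y-x)\,dy\Bigr),\qquad x\in\Omega,\ r\in(0,\delta(x)).
\end{equation*}

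Next I would treat two cases according to where $M$ is attained. \emph{Interior case:} if $u(x_0)=M$ for some $x_0\in\Omega$, pick any $r\in(0,\delta(x_0))$; since $\Omega$ is bounded, $\C\Omega$ has infinite Lebesgue measure and $\eta_r(\,\cdot-x_0)>0$ on the whole of $\C B_r(x_0)\supseteq\C\Omega$, so $\int_{\C\Omega}\eta_r(y-x_0)\,dy>0$. Plugging $x=x_0$ in the boxed inequality above gives $M\leq M(1-\text{positive})$, hence $M\leq 0$, contradicting $M>0$. \emph{Boundary case:} suppose $u<M$ everywhere in $\Omega$ and $u(\theta_0)=M$ for some $\theta_0\in\partial\Omega$. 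Here I would choose $r=\delta(x)/2$ and show that there is a constant $c_0=c_0(\Omega,s)>0$ such that
\begin{equation*}
\int_{\C\Omega}\eta_{\delta(x)/2}(y-x)\,dy\ \geq\ c_0,\qquad\hbox{for all }x\in\Omega\hbox{ sufficiently close to }\partial\Omega.
\end{equation*}
This implies $u(x)\leq M(1-c_0)$ near the boundary, which together with the continuity $u(x)\to u(\theta_0)=M$ as $x\to\theta_0$ in $\overline\Omega$ produces the desired contradiction.

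The only nontrivial step is the uniform lower bound on $\int_{\C\Omega}\eta_{\delta(x)/2}(y-x)\,dy$. Since $\partial\Omega\in C^{1,1}$, there exists a uniform exterior ball radius $R_0>0$; for $x\in\Omega$ with $\delta(x)=d$ small and $r=d/2$, the exterior ball tangent at the projection of $x$ onto $\partial\Omega$ contains a region $E\subseteq\C\Omega$ with $|E|\geq c_1 d^{n}$ contained in the annulus $\{y:d\leq|y-x|\leq C d\}$; on $E$ the density satisfies $\eta_r(y-x)\geq c_2 r^{2s}/\bigl(d^{n}(d^{2}-r^{2})^{s}\bigr)\geq c_3/d^{n}$, so that $\int_E\eta_r\geq c_0>0$ uniformly in $x$. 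This is the main geometric obstacle, but it is a standard consequence of the $C^{1,1}$ regularity of $\partial\Omega$ and the explicit form of $\eta_r$, so I do not expect any further complication.
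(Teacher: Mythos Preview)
Your argument is correct, but it takes a different and longer route than the paper. The paper dispenses with the mean value formula altogether: assuming $\Omega^+=\{u>0\}\neq\varnothing$, it picks a global maximum point $x_0\in\Omega$ of $u$ (which exists since $u\in C(\overline\Omega)$) and simply evaluates
\[
\Ds u(x_0)=\A(n,s)\,PV\int_{\R^n}\frac{u(x_0)-u(y)}{|x_0-y|^{n+2s}}\,dy>0,
\]
the strict positivity coming from the region $\C\Omega$ where $u(x_0)-u(y)\geq u(x_0)>0$. This contradicts $\Ds u\leq0$ in one line.

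Your approach via Theorem~\ref{meanvalue} is legitimate (there is no circularity: that theorem is proved by exhibiting the explicit kernel $\Gamma_s*\eta_r$ and does not rely on the present lemma), and it has one genuine advantage: you treat explicitly the case where $\max_{\overline\Omega}u$ might be attained only on $\partial\Omega$, which the paper's proof glosses over since the hypotheses do not force $u\leq0$ on $\partial\Omega$. Your exterior-ball estimate for $\int_{\C\Omega}\eta_{\delta(x)/2}(\cdot-x)$ is correct and rules that case out cleanly. On the other hand, once one observes (or assumes) that the maximum is interior, the paper's direct computation is considerably shorter and avoids the geometric lemma entirely.
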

\begin{tfa}\rm Call $\Omega^+=\{x\in\R^n:u>0\}$:
$\Omega^+$ is an open set contained in $\Omega$.
Assume by contradiction that $\Omega^+\neq\varnothing$.
By continuity of $u$ in $\Omega^+$, 
there exists $x_0\in\Omega^+$ such that
$u(x_0)=\max\{u(x):x\in\Omega^+\}$, but this point $x_0$
will be also a global maximum for $u$ since outside $\Omega^+$
the function $u$ is nonpositive.
Thus
$$
\Ds u(x_0)=\A(n,s)\,
PV\int_{\R^n}\frac{u(x_0)-u(y)}{|x_0-y|^{n+2s}}>0
$$
contradicting our hypotheses. Therefore $\Omega^+$ is empty.
\end{tfa}

By splitting $g$ into its positive and negative part,
it suffices to prove Theorem \ref{pointwise} in the case where $g\geq0$.
So, from now on we will deal with nonnegative boundary data $g:\C\Omega\rightarrow[0,+\infty)$
which are measurable functions with
\begin{equation}\label{g}
0\leq-\int_{\C\Omega}g(y)\cdot\Ds G_\Omega(x,y)\;dy<+\infty
\qquad x\in\Omega.
\end{equation}
Note that, in view of equation \eqref{est-pois},
\begin{equation}\label{est-pois2}
0\leq-\Ds G_\Omega(x,y)\leq C\int_{\C\Omega}\frac{\delta(y)^s}{\left|y-z\right|^{n+2s}}\;dz
\leq C\int_{\delta(y)}^{+\infty}\frac{d\rho}{\rho^{1+s}}=\frac{C}{s}\frac{1}{\delta(y)^s},
\qquad x\in\Omega,\ y\in\C\Omega.
\end{equation}
\medskip

\begin{proof}[Proof of Theorem \ref{pointwise}]
We split the proof by building the solution associated to each 
datum $f$, $g$ and $h$ separately.

\paragraph{First case: $f,\,h\equiv 0$.}

We present here a readaptation of \cite[Lemma 1.13]{landkof}.

The function $u$ defined by equation \eqref{linearsol} is 
continuous in $\Omega$ as an application of the Dominated Convergence Theorem
and inequality \eqref{est-pois2}.
The continuity up to the boundary is postponed to Paragraph \ref{cont-sharm-sec}.

For the sake of clarity we divide the proof in four steps:
for special forms of $g$, for $g$ regular enough, for $g$ bounded
and finally for any other $g$.

\it Step 1. \rm Suppose we have a measure $\nu$, such that $\nu(\Omega)=0$ and
$$
g(x)=\int_{\C\Omega}\Gamma_s(x-y)\;d\nu(y)\quad x\in\C\Omega.
$$
Then set
$$
\widetilde{u}(x)=\int_{\R^n}\Gamma_s(x-y)\;d\nu(y)\quad\forall x\in\R^n:
$$
\begin{description}
\item[-] $\widetilde{u}=g$, since $\nu$ is supported in $\C\Omega$,
\item[-] $\widetilde{u}=u$ in $\Omega$, where $u$ is given by
$$
u(x)=-\int_{\C\Omega}g(y)\,\Ds G_\Omega(x,y)\;dy,\quad x\in\Omega.
$$
Indeed,
\begin{eqnarray*}
-\int_{\C\Omega}g(y)\cdot\Ds G_\Omega(x,y)\;dy & = & 
-\int_{\C\Omega}\Ds G_\Omega(x,y)\left(\int_{\C\Omega}\Gamma_s(z-y)\;d\nu(z)\right)\;dy \\ & = & 
-\int_{\C\Omega}\left(\int_{\C\Omega}\Gamma_s(z-y)\cdot\Ds G_\Omega(x,y)\;dy\right)\;d\nu(z) \\ 
\hbox{\small [in view of equation \eqref{gammas-eq}]}
& = & 
\int_{\C\Omega}\Gamma_s(z-y)\;d\nu(z)
\quad = \quad \widetilde{u}(x)
\end{eqnarray*}
\item[-] $u$ is $s$-harmonic in $\Omega$:
\begin{eqnarray*}
\left(\eta_r*u\right)(x) & = & 
\int_{\C B_r(x)}\eta_r(y-x)\cdot u(y)\;dy \\
& = & 
\int_{\C B_r(x)}\eta_r(y-x)\left(\int_{\R^n}\Gamma_s(z-y)\;d\nu(z)\right)\;dy \\
& = & 
\int_{\R^n}\left(\int_{\C B_r(x)}\Gamma_s(z-y)\,\eta_r(y-x)\;dy\right)\;d\nu(z)
\quad = \quad \int_{\R^n}\Gamma_s(x-z)\;d\nu(z) 
\ = \ u(x)
\end{eqnarray*}
by choosing $0<r<\delta(x)\leq|z-x|$
and exploiting the $s$-harmonicity of $\Gamma_s$.
\end{description}

\it Step 2. \rm If $g\in C^\infty(\overline{\C\Omega})$ and supp$\,g$ is bounded,
then $g$ admits an extension $\tilde{g}\in C^\infty_c(\R^n)$ and (see \cite[Lemma 1.1]{landkof})
there exists an absolutely continuous measure $\nu$, with 
density $\Psi$, such that 
$$
\tilde{g}(x)=\int_{\C\Omega}\Gamma_s(x-y)\;d\nu(y)
\quad\hbox{for any $x\in\R^n$}.
$$
Denote by $\nu_\Omega$ the measure obtained by restricting $\nu$ to $\Omega$, i.e.
$\nu_\Omega(A)=\nu(A\cap\Omega)$ for any measurable $A$ and $\nu_\Omega$
has density $\Psi_\Omega=\Psi\chi_\Omega$, and 
$$
\nu_\Omega'(y)=\left\lbrace\begin{array}{ll}
\int_\Omega\Ds G_\Omega(x,y)\;d\nu_\Omega(x)
=-\int_\Omega\Ds G_\Omega(x,y)\,\Psi_\Omega(x)\;dx & y\in\C\Omega \\
0 & y\in\Omega
\end{array}\right.:
$$
the integral is well-defined because $\Psi_\Omega\in L^1(\Omega)$
while $\Ds G_\Omega(\cdot,y)\in C(\overline\Omega)$ for any fixed $y\in\C\Omega$
as a consequence of $G_\Omega(\cdot,y)\in C(\overline\Omega)$ and equation \eqref{green-pois}.
Define $\gamma=\nu-\nu_\Omega+\nu_\Omega'$ which is a measure supported in $\C\Omega$.
Then, when $x\in\C\Omega$,
$$
\int_{\R^n}\Gamma_s(x-y)\;d\nu_\Omega'(y)=
-\int_\Omega\left(\int_{\C\Omega}\Gamma_s(x-y)\,\Ds G_\Omega(z,y)\;dy\right)
\;d\nu_\Omega(z)=
\int_\Omega\Gamma_s(x-z)\;d\nu_\Omega(z)
$$ 
where we have used \eqref{gammas-eq}. Therefore
$$
\int_{\C\Omega}\Gamma_s(x-y)\;d\gamma(y)
=\int_{\C\Omega}\Gamma_s(x-y)\;d\nu(y)=\tilde{g}(x)
$$
so that we can apply the previous step of the proof.

\it Step 3. \rm For $g\in L^\infty(\C\Omega)$,
consider a sequence 
$\{g_N\}_{n\in\N}\subseteq C^\infty(\overline{\C\Omega})$ uniformly bounded 
and converging pointwisely to $g$.
The corresponding sequence of $s$-harmonic functions $u_N$ converges to $u$,
since
$$
u_N(x)=-\int_{\C\Omega}g_N(y)\,\Ds G_\Omega(x,y)\;dy
\xrightarrow[N\uparrow+\infty]{}
u(x)=-\int_{\C\Omega}g(y)\,\Ds G_\Omega(x,y)\;dy
$$
by Dominated Convergence.
Then, again by the Dominated Convergence theorem we have
$$
u_N\ =\ \eta_\delta*u_N\longrightarrow\eta_\delta*u,\quad \hbox{in }\Omega
$$
therefore
$$
u\ =\ \lim u_N\ =\ \eta_\delta*u,\quad \hbox{in }\Omega,
$$
i.e. $u$ is $s$-harmonic in $\Omega$.

\it Step 4. \rm For a general measurable nonnegative $g$ it suffices now to consider
an increasing sequence $g_N$ converging to $g$, e.g. $g_N=\min\{g,N\}$.
Then the corresponding sequence of $s$-harmonic functions $u_N$ converges to $u$.
Moreover, the sequence $\{u_N\}_N$ is increasing:
$$
u_{N+1}(x)=-\int_{\C\Omega}g_{N+1}(y)\,\Ds G_\Omega(x,y)\;dy\geq
-\int_{\C\Omega}g_N(y)\,\Ds G_\Omega(x,y)\;dy=u_N(x).
$$
Then, thanks to the Monotone Convergence theorem we have
$$
u_N\ =\ \eta_\delta*u_N\longrightarrow\eta_\delta*u,\quad \hbox{in }\Omega
$$
therefore
$$
u=\lim u_N=\eta_\delta*u,\quad \hbox{in }\Omega.
$$

\it Uniqueness. \rm Finally, if $g\in C(\overline{\C\Omega})$,
the solution we have built is the only solution in $C(\overline{\Omega})$
as an application of Lemma \ref{maxprinc}.


\begin{rmk}\rm Suppose to have $g:\C\Omega\rightarrow[0,+\infty)$
for which \eqref{g} fails and there is a set $\mathcal{O}\subseteq\Omega$,
$|\mathcal{O}|>0$, in which
$$
-\int_{\C\Omega}g(y)\,\Ds G_\Omega(x,y)\;dy=+\infty,\qquad x\in\mathcal{O}.
$$
It is not possible in this case to have a pointwise solution of
$$
\left\lbrace\begin{array}{ll}
\Ds u=0 & \hbox{ in }\Omega, \\
u=g & \hbox{ in }\C\Omega.
\end{array}\right.
$$
Indeed, if we set $g_N=\min\{g,N\}$, $N\in\N$, then
$g_N$ converges monotonically to $g$, and 
$$
u(x)\geq-\int_{\C\Omega}g_N(y)\,\Ds G_\Omega(x,y)\;dy
\xrightarrow{\ N\uparrow+\infty\ }+\infty
$$
for all $x\in\mathcal{O}$.
\end{rmk}

\paragraph{Second case: $g,\,h\equiv 0$.}

Use the construction of $G_\Omega$ to write for $x\in\Omega$
$$
u(x)=\int_{\Omega}f(y)\,G_\Omega(y,x)\;dy=
\int_{\Omega}f(y)\,\Gamma_s(y-x)\;dy-\int_{\Omega}f(y)\,H(y,x)\;dy:
$$
the first addend is a function $u_1(x)$ which solves 
$\Ds u_1=f\chi_\Omega$ in $\R^n$, let us turn to the second one:
\begin{eqnarray*}
u_2(x)\ :=\ \int_{\Omega}f(y)H(y,x)\;dy & = & 
-\int_{\Omega}f(y)\left[
\int_{\C\Omega}\Gamma_s(y-z)\Ds H(x,z)\;dz\right]\;dy \\
& = & -\int_{\C\Omega}\Ds H(x,z)\left[
\int_{\Omega}f(y)\Gamma_s(y-z)\;dy\right]\;dz \\
& = & -\int_{\C\Omega}\Ds H(x,z)\,u_1(z)\;dz \ =\ -\int_{\C\Omega}u_1(z)\,\Ds G_\Omega(x,z)\;dz.
\end{eqnarray*}
According to the \it Step 4 \rm above, $u_2$ solves
$$
\left\lbrace\begin{array}{ll}
\Ds u_2=0 & \hbox{ in }\Omega \\
u_2=u_1 & \hbox{ in }\C\Omega
\end{array}\right.
$$
therefore $u=u_1-u_2$ and 
\begin{equation}\label{56}
\left\lbrace\begin{array}{ll}
\Ds u=\Ds u_1-\Ds u_2=f & \hbox{ in }\Omega \\
u=u_1-u_2=0 & \hbox{ in }\C\Omega.
\end{array}\right.
\end{equation}
Finally, $u\in C^{2s+\alpha}(\Omega)$ thanks to \cite[Proposition 2.8]{silvestre},
while for inequality
$$
|u(x)|\leq C \Arrowvert f\Arrowvert_\infty\, \delta(x)^s
$$
we refer to \cite[Proposition 1.1]{rosserra}.

\begin{rmk}\rm Note that these computations give an alternative integral representation
to the one provided in equation \eqref{green-repr}
for $u$, meaning that we have both
$$
\int_{\Omega} G_\Omega(x,y)\,\Ds u(y)\;dy
\ =\ u(x)\ =\ 
\int_{\Omega} G_\Omega(y,x)\,\Ds u(y)\;dy,
$$
so we must conclude that $G_\Omega(x,y)=G_\Omega(y,x)$ for $x,y\in\Omega,\ x\neq y$.
\end{rmk}

\paragraph{Third case: $f,\,g\equiv 0$.}

The function $u(x)=\int_{\partial\Omega}M(x,\theta)\,h(\theta)\;d\mathcal{H}(\theta)$
is $s$-harmonic: to show this we use both the construction of $M(x,\theta)$
and the mean value formula \eqref{meanform}. 
Using the notations of \eqref{h-approx}, for any $0<r<\delta(x)$
there exists $z\in \overline{B_r(x)}$
$$
u_\eps(x)=\int_{\C B_r}\eta_r(y)\,u_\eps(x-y)\;dy + \gamma(n,s,r)\Ds u_\eps(z)
=\int_{\C B_r}\eta_r(y)\,u_\eps(x-y)\;dy + \gamma(n,s,r)\,f_\eps(z)
$$
where the equality $\Ds u_\eps=f_\eps$ holds throughout $\Omega$ in view of 
\eqref{56}.
Letting $\eps\downarrow 0$ we have both
$$
u_\eps(x)\xrightarrow{\eps\downarrow 0}u(x)=\int_{\partial\Omega}M(x,\theta)\,h(\theta)\;d\mathcal{H}(\theta)
$$
and 
$$
f_\eps(z)\xrightarrow{\eps\downarrow 0} 0,\qquad\hbox{ for any }z\in\Omega.
$$
This implies that we have equality
$$
u(x)=\int_{\C B_r}\eta_r(y)\,u(x-y)\;dy,
$$
i.e. $u$ is $s$-harmonic.
\end{proof}

\begin{lem}[An explicit example on the ball]\label{expl-sol}
The functions
$$
u_\sigma(x)=\left\lbrace\begin{array}{ll}
\displaystyle\frac{c(n,s)}{\left(1-|x|^2\right)^\sigma} & |x|<1 \\
\displaystyle\frac{c(n,s+\sigma)}{\left(1-|x|^2\right)^\sigma} & |x|>1 \\
\end{array}\right.\quad 0<\sigma<1-s,\quad\qquad
u_{1-s}(x)=\left\lbrace\begin{array}{ll}
\displaystyle\frac{c(n,s)}{\left(1-|x|^2\right)^{1-s}} & |x|<1 \\
0 & |x|>1
\end{array}\right.
$$
are $s$-harmonic in the ball $B=B_1(0)$, where $c(n,s)$ is given by \eqref{cns2}.
\end{lem}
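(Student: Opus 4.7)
\emph{Proof plan.} I treat the two cases separately, relying on the classical explicit Poisson kernel of the ball at fractional order $s$,
\begin{equation*}
P_B(x,y) = c(n,s)\left(\frac{1-|x|^2}{|y|^2-1}\right)^{\!s}\frac{1}{|x-y|^n},\qquad x\in B,\ y\in\C B,
\end{equation*}
which one reads off the normalization \eqref{cns} at $x=0$ and extends to general $x\in B$ by a classical computation (cf.\ \cite{landkof}).

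For $u_\sigma$ with $0<\sigma<1-s$ I would invoke Theorem~\ref{pointwise} with $f=h=0$ and exterior datum $g(y)=c(n,s+\sigma)/(|y|^2-1)^\sigma$. Hypothesis \eqref{gintro} is routine: near $\partial B$ the integrand behaves like $(|y|-1)^{-(\sigma+s)}$ with $\sigma+s<1$, while at infinity it decays as $|y|^{-n-2s-2\sigma}$. Inserting $g$ into \eqref{linearsol} factors the Poisson integral as
\begin{equation*}
c(n,s)(1-|x|^2)^s \cdot c(n,s+\sigma)\int_{\C B}\frac{dy}{(|y|^2-1)^{s+\sigma}|x-y|^n}.
\end{equation*}
The remaining integral is the Poisson-integral-of-$1$ identity at fractional order $\tau=s+\sigma\in(s,1)$,
\begin{equation*}
c(n,\tau)\int_{\C B}\frac{dy}{(|y|^2-1)^\tau|x-y|^n}=(1-|x|^2)^{-\tau},
\end{equation*}
obtained by applying Theorem~\ref{pointwise} at order $\tau$ to the tautologically $\tau$-harmonic constant function $1$. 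Substituting back reproduces $c(n,s)(1-|x|^2)^{-\sigma}$, so $u_\sigma$ agrees on $B$ with the unique pointwise solution of Theorem~\ref{pointwise} and is therefore $s$-harmonic. This Poisson-integral-of-$1$ identity carries essentially all the arithmetic weight of the argument, and the restriction $\sigma+s<1$ is decisive for it.

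For $u_{1-s}$ the plan is to pass to the limit $\sigma\uparrow 1-s$ in the mean value formula for $u_\sigma$. Inside $B$, $u_\sigma$ increases monotonically to $u_{1-s}$; outside $B$, the normalization $c(n,s+\sigma)=\Gamma(n/2)\sin(\pi(s+\sigma))/\pi^{1+n/2}$ tends to $0$, so $u_\sigma\to 0=u_{1-s}$ pointwise on $\C B$. Fix $x\in B$ and $r<1-|x|$ and split
\begin{equation*}
u_\sigma(x)=\int_{B\setminus B_r(x)}\eta_r(y-x)\,u_\sigma(y)\,dy\;+\;c(n,s+\sigma)\int_{\C B}\frac{\eta_r(y-x)\,dy}{(|y|^2-1)^\sigma}.
\end{equation*}
The first integral converges by monotone convergence. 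The second carries the vanishing factor $c(n,s+\sigma)$ in front of an integral that stays uniformly bounded in $\sigma$ (use that $\eta_r(y-x)$ is bounded on $\C B$ since $|y-x|\geq 1-|x|>r$ there, and that the integrand is $O((|y|-1)^{s-1})$ near $\partial B$ and $O(|y|^{-n-2s-2\sigma})$ at infinity), hence vanishes in the limit. In the limit one recovers the mean value formula for $u_{1-s}$; since $u_{1-s}\in C(B)$ by inspection, Definition~\ref{sharm-def} is met. The delicate point is precisely guaranteeing that no mass concentrates on $\partial B$ in the limit $\sigma\uparrow 1-s$, which is what the dominated convergence bound rules out.
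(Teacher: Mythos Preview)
Your proof is correct and follows essentially the same approach as the paper: for $0<\sigma<1-s$ you factor the Poisson integral so as to recognize the order-$(s+\sigma)$ Poisson integral of the constant $1$, and for $\sigma\uparrow 1-s$ you pass to the limit in the mean value identity, handling the $\C B$ piece via the vanishing constant $c(n,s+\sigma)\to 0$ against a uniformly bounded integral (the paper phrases this as $g_\sigma\to 0$ in $L^1(\C B)$ with $\eta_r(\cdot-x)$ bounded on $\C B$, which is the same thing).
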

\begin{tfa}\rm
According to \cite[equation (1.6.11')]{landkof} and in view to
the computations due to Riesz \cite{riesz}, the Poisson kernel
for the ball $B$ of radius $1$ and centered at $0$ has the
explicit expression
\begin{equation}\label{pois-ball}
-\Ds G_B^s(x,y)=\frac{c(n,s)}{|x-y|^n}
\left(\frac{1-|x|^2}{|y|^2-1}\right)^s,\qquad
c(n,s)=\frac{\Gamma(n/2)\,\sin(\pi s)}{\pi^{1+n/2}}.
\end{equation}
We construct here the $s$-harmonic function
induced by data
$$
h(\theta)=0,\qquad
g_\sigma(y)=\frac{c(n,s+\sigma)}{\left(|y|^2-1\right)^\sigma},\quad 0<\sigma<1-s.
$$
Indeed, it can be explicitly computed
\begin{eqnarray*}
\left(1-|x|^2\right)^\sigma u_\sigma(x) & = & 
-\left(1-|x|^2\right)^\sigma\int_{\C B}g(y)\cdot\Ds G_B^s(x,y)\;dy \\
& = & 
\int_{\C B}\frac{c(n,s)}{|x-y|^n}\cdot
\frac{{(1-|x|^2)}^{s+\sigma}}{{(|y|^2-1)}^s}\:g(y)\;dy
\ = \ 
\int_{\C B}\frac{c(n,s)\,c(n,s+\sigma)}{|x-y|^n}\cdot\frac{{(1-|x|^2)}^{s+\sigma}}{{(|y|^2-1)}^{s+\sigma}}\;dy \\
& = & 
-c(n,s)\int_{\C B}(-\Delta)^{s+\sigma}G_B^{s+\sigma}(x,y)\;dy
\quad = \quad c(n,s)
\end{eqnarray*}
therefore the function
$$
u_\sigma(x)=\left\lbrace\begin{array}{ll}
\displaystyle\frac{c(n,s)}{\left(1-|x|^2\right)^\sigma} & x\in B \\
\displaystyle\frac{c(n,s+\sigma)}{\left(1-|x|^2\right)^\sigma} & x\in\C B \\
\end{array}\right.
$$
solves the problem
$$
\left\lbrace\begin{array}{ll}
\Ds u_\sigma=0 & \hbox{ in }B \\
\displaystyle u_\sigma(x)=g_\sigma(x)=\frac{c(n,s+\sigma)}{\left(|x|^2-1\right)^\sigma} & \hbox{ in }\C B.
\end{array}\right.
$$
We are interested in letting $\sigma\rightarrow 1-s$.
Obviously,
$$
u_\sigma(x)\ \xrightarrow{\sigma\rightarrow 1-s}\ u(x)=
\left\lbrace\begin{array}{ll}
\displaystyle \frac{c(n,s)}{\left(1-|x|^2\right)^{1-s}} & \hbox{ in }B\\
\displaystyle 0 & \hbox{ in }\C B
\end{array}\right.
$$
everywhere in $\R^n\setminus\partial B$. $s$-harmonicity is preserved,
since for $x\in B$ and any $r\in(0,1-|x|)$,
\begin{multline*}
u(x)=\lim_{\sigma\rightarrow 1-s}u_\sigma(x)=
\lim_{\sigma\rightarrow 1-s}\int_{\C B_r}u_\sigma(y)\,\eta_r(x-y)\;dy= \\
=\lim_{\sigma\rightarrow 1-s}\int_{\C B_r\cap B}u_\sigma(y)\,\eta_r(x-y)\;dy+
\lim_{\sigma\rightarrow 1-s}\int_{\C B}g_\sigma(y)\,\eta_r(x-y)\;dy= \\
=\int_{\C B_r\cap B}u(y)\,\eta_r(x-y)\;dy=
\int_{\C B_r}u(y)\,\eta_r(x-y)\;dy
\end{multline*}
since $g_\sigma\xrightarrow[]{\sigma\rightarrow 1-s}0$ in $L^1(\C B)$,
while $\eta_r(\,\cdot\,-x)$ is bounded in $\C B$ for $0<r<\delta(x)$.
For any $\psi\in C^\infty_c(\Omega)$ we have
$$
\int_B u_\sigma\psi=-\int_{\C B}g_\sigma(x)\int_B \psi(z)\,\Ds G_B^s(z,x)\;dz\;dx=
-\int_{\C B}\frac{c(n,s+\sigma)}{\left(|x|^2-1\right)^\sigma}\int_B \psi(z)\,\Ds G_B^s(z,x)\;dz\;dx.
$$
Then on the one hand
$$
\int_B u_\sigma\psi\xrightarrow{\sigma\rightarrow 1-s}\int_B u\psi=c(n,s)\int_{B}\frac{\psi(x)}{\left(1-|x|^2\right)^{1-s}}\;dx.
$$
On the other hand
\begin{multline*}
 -\int_{\C B}\frac{c(n,s+\sigma)}{\left(|x|^2-1\right)^\sigma}\int_B \psi(z)\,\Ds G_B^s(z,x)\;dz\;dx = \\
=
\int_{\C B}\frac{c(n,s+\sigma)}{\left(|x|^2-1\right)^\sigma}\left[\int_B \psi(z)\frac{c(n,s)}{|z-x|^n}\left(\frac{1-|z|^2}{|x|^2-1}\right)^s\;dz\right]\;dx 
=\\=
\int_{\C B}\frac{c(n,s+\sigma)\,c(n,s)}{\left(|x|^2-1\right)^{\sigma+s}}\int_B\left[\psi(z)\frac{\left(1-|z|^2\right)^s}{|z-x|^n}\;dz\right]\;dx.
\end{multline*}
Note that the function $\int_B \psi(z)\cdot\frac{(1-|z|^2)^s}{|z-x|^n}\;dz$
is $C(\overline{\C B})$ in the $x$ variable, since $\psi\in C^\infty_c(B)$.
Splitting $x\in\C B$ in spherical coordinates, 
i.e. $x=\rho\theta$, $\rho=|x|\in(1,+\infty)$ and $|\theta|=1$, and denoting by $\phi\in C(\overline{\Omega})$
the function satisfying $\Ds\phi|_\Omega=\psi$, $\phi=0$ in $\C\Omega$,
\begin{equation}\label{678}
-c(n,s+\sigma)\int_{\C B}\frac{\Ds\phi(x)}{\left(|x|^2-1\right)^\sigma}\;dx =
 \int_1^{+\infty}\frac{c(n,s+\sigma)\,c(n,s)}{\left(\rho^2-1\right)^{\sigma+s}}\cdot\psi_1(\rho)\rho^{n-1}\;d\rho
\end{equation}
where $\psi_1(\rho)=\int_{\partial B}\int_B \psi(z)\,\frac{\left(1-|z|^2\right)^s}{|z-\rho\theta|^n}\;dz\;d\mathcal{H}(\theta)$
is continuous on $[1,+\infty)$ and 
has a decay at infinity which is comparable to that of $\rho^{-n}$.
Therefore, as $\sigma\rightarrow1-s$,
\begin{multline*}
-c(n,s+\sigma)\int_{\C B}\frac{\Ds\phi(x)}{\left(|x|^2-1\right)^\sigma}\;dx \longrightarrow
c(n,s)\,c(n,1/2)\,\frac{\psi_1(1)}{2}=\\
=\frac{c(n,s)\,c(n,1/2)}{2}\int_{\partial B}\int_B \psi(z)\cdot\frac{\left(1-|z|^2\right)^s}{|z-\theta|^n}\;dz\;d\mathcal{H}(\theta).
\end{multline*}
Indeed, 
by the definition of $c(n,s)$ in \eqref{cns}, it is 
$$
-c(n,s+\sigma)\int_1^{+\infty}\frac{\rho\;d\rho}{\left(\rho^2-1\right)^{\sigma+s}}=-\frac{c(n,s+\sigma)}{2(1-s-\sigma)}
\xrightarrow[]{\sigma\uparrow 1-s}\frac{c(n,1/2)}{2}
$$
and, since in \eqref{678} the product $\psi_1(\rho)\,\rho^{n-2}\in C([1,+\infty))$,
$$
-c(n,s+\sigma)\int_{\C B}\frac{\Ds\phi(x)}{\left(|x|^2-1\right)^\sigma}\;dx \longrightarrow
c(n,s)\,c(n,1/2)\,\frac{\psi_1(1)}{2}.
$$
So
$$
\int_B u\psi
=\frac{c(n,s)\,c(n,1/2)}{2}\int_{\partial B}\int_B \psi(z)\cdot\frac{\left(1-|z|^2\right)^s}{|z-\theta|^n}\;dz\;d\mathcal{H}(\theta),
$$
i.e. 
$$
u(x)=\frac{c(n,s)\,c(n,1/2)}{2}\int_{\partial B}\frac{\left(1-|x|^2\right)^s}{|x-\theta|^n}\;d\mathcal{H}(\theta)
$$
and indeed the kernel $M_B(x,\theta)$ for the ball is
$$
M_B(x,\theta)=
\lim_{\stackrel{\hbox{\scriptsize $y\in\Omega$}}{y\rightarrow\theta\in\partial\Omega}}\frac{G_B^s(x,y)}{{\delta(y)}^s}
=\frac{c(n,s)}{2}\cdot\frac{\left(1-|x|^2\right)^s}{|x-\theta|^n}.
$$
and so $u$ solves 
$$
\left\lbrace\begin{array}{ll}
\Ds u=0 & \hbox{ in } B \\
u=0 &\hbox{ in }\C B \\
Eu=c(n,1/2) & \hbox{ on }\partial B.
\end{array}\right.
$$
\end{tfa}

\subsection{The linear Dirichlet problem: an $L^1$ theory}

We define $L^1$ solutions for the Dirichlet problem,
in the spirit of Stampacchia \cite{stampacchia}.
A proper functional space in which to consider 
test functions is the following.

\begin{lem}[Test function space]\label{testspace}
For any $\psi\in C^\infty_c(\Omega)$ 
the solution $\phi$ of
$$
\left\lbrace\begin{array}{ll}
\Ds \phi = \psi & \hbox{ in }\Omega \\
\phi = 0 & \hbox{ in }\C\Omega \\
E\phi = 0 & \hbox{ on }\partial\Omega
\end{array}\right.
$$
satisfies the following
\begin{enumerate}
\item $\Ds\phi\in L^1(\C\Omega)$ and for any $x\in\C\Omega$
\begin{equation}\label{repr-lapl}
\Ds\phi(x)=\int_\Omega\psi(z)\,\Ds G_\Omega(z,x)\;dz,
\end{equation}
\item for any $x\in\Omega,\,\theta\in\partial\Omega$
\begin{equation}\label{repr-deriv}
\int_\Omega M_\Omega(x,\theta)\psi(x)\;dx=D_s\phi(\theta).
\end{equation}
\end{enumerate}
\end{lem}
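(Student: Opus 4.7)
The plan is to leverage the pointwise representation of $\phi$ from Theorem \ref{pointwise} and reduce everything to properties of the Green and Martin kernels that have already been established.

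First I would record that, by Theorem \ref{pointwise} applied with $f=\psi$, $g\equiv 0$, $h\equiv 0$, the solution admits the pointwise representation
$$
\phi(x)=\int_\Omega G_\Omega(x,y)\,\psi(y)\;dy,\qquad x\in\Omega,\qquad\phi\equiv 0 \text{ in }\C\Omega.
$$
From here, item 1 is almost immediate: for $x\in\C\Omega$, since $\phi(x)=0$ and $\phi\equiv 0$ on $\C\Omega$, the definition of $\Ds$ collapses to
$$
\Ds\phi(x)=-\A(n,s)\int_\Omega\frac{\phi(y)}{|x-y|^{n+2s}}\;dy.
$$
Substituting the Green representation for $\phi(y)$ and interchanging the order of integration via Fubini (justified by the nonnegativity of $G_\Omega$ and compactness of $\mathrm{supp}\,\psi$) yields
$$
\Ds\phi(x)=-\int_\Omega\psi(z)\left[\A(n,s)\int_\Omega\frac{G_\Omega(y,z)}{|x-y|^{n+2s}}\,dy\right]dz=\int_\Omega\psi(z)\,\Ds G_\Omega(z,x)\;dz,
$$
where in the last step I invoke formula \eqref{green-pois}. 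Integrability $\Ds\phi\in L^1(\C\Omega)$ then follows from one more application of Fubini and the fact, already established in the proof of property \emph{ii)} of the Green function lemma, that $\Ds G_\Omega(z,\cdot)\in L^1(\C\Omega)$ for every $z\in\Omega$, with the $L^1$ bound locally uniform in $z$ on the compact set $\mathrm{supp}\,\psi$.

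For item 2, I would start from the same representation and write
$$
\frac{\phi(y)}{\delta(y)^s}=\int_\Omega\psi(x)\,\frac{G_\Omega(x,y)}{\delta(y)^s}\;dx,\qquad y\in\Omega.
$$
As $y\to\theta\in\partial\Omega$, the integrand converges pointwise to $\psi(x)\,M_\Omega(x,\theta)$ by definition of $M_\Omega$. To apply dominated convergence I would use the upper bound in \eqref{est-green}, which gives, for $y$ in a small neighborhood of $\partial\Omega$ and $x\in K:=\mathrm{supp}\,\psi$ (a compact subset of $\Omega$, so $\mathrm{dist}(K,\partial\Omega)>0$), the uniform estimate
$$
\frac{G_\Omega(x,y)}{\delta(y)^s}\le \frac{c_2\,\delta(x)^s}{|x-y|^n}\le C_K,
$$
making the integrand dominated by $C_K\,\|\psi\|_\infty\,\mathbf{1}_K(x)$. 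Hence the limit passes inside and yields \eqref{repr-deriv}. The same dominated-convergence argument, together with continuity of $\theta\mapsto M_\Omega(x,\theta)$ for $x\in K$ (which follows from the Green function's boundary behavior), establishes that $D_s\phi\in C(\partial\Omega)$, matching the claim made in Proposition \ref{intparts-prop}.

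I expect the only subtle point to be a careful justification of the uniform domination in the second step; everything else is a direct application of Fubini and the representation formulas already proven. In particular, the crucial fact that $\psi$ has compact support in $\Omega$ keeps $x$ away from $\partial\Omega$, which is exactly what is needed to make the Martin-kernel limit well behaved and uniform in the parameter.
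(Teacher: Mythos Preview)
Your proposal is correct and follows essentially the same route as the paper: the Green representation $\phi(x)=\int_\Omega G_\Omega(x,y)\psi(y)\,dy$, then Fubini plus formula \eqref{green-pois} for item 1, and a dominated-convergence passage to the Martin kernel for item 2. Your justification of the domination step, exploiting that $\mathrm{supp}\,\psi$ is compactly contained in $\Omega$ so that $|x-y|$ is bounded below, is in fact more explicit than the paper's version, which simply swaps limit and integral without comment.
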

\begin{tfa}\rm The solutions $\phi$ is given by 
$\phi(x)=\int_{\R^n}\psi(y)\,G_\Omega(y,x)\;dy\in C^{2s+\eps}(\Omega)\cap C(\overline{\Omega})$.
Also, when $x\in\C\Omega$,
\begin{multline*}
\Ds\phi(x)=-\A(n,s)\,\int_\Omega\frac{\phi(y)}{|y-x|^{n+2s}}\;dy=
-\int_\Omega\frac{\A(n,s)}{|y-x|^{n+2s}}\left[\int_\Omega\psi(z)\,G_\Omega(z,y)\;dz\right]\;dy=\\
=-\int_\Omega\psi(z)\left[\A(n,s)\,\int_\Omega\frac{G_\Omega(z,y)}{|y-x|^{n+2s}}\;dy\right]\;dz=
\int_\Omega\psi(z)\,\Ds G_\Omega(z,x)\;dz.
\end{multline*}
Thanks to the integrability of $\Ds G_\Omega(z,x)$ in $\C\Omega$,
also $\Ds \phi\in L^1(\C\Omega)$,
so that $\phi$ is an admissible function for the integration by parts formula
\eqref{intpartsprop}.
Moreover,
$$
\int_\Omega M_\Omega(x,\theta)\psi(x)\;dx=D_s\phi(\theta).
$$
Indeed,
$$
\int_\Omega M_\Omega(x,\theta)\psi(x)\;dx =
\int_\Omega \psi(x)\,
\lim_{\stackrel{\hbox{\scriptsize $z\!\in\!\Omega$}}{z\rightarrow\theta}}\frac{G_\Omega(x,z)}{{\delta(z)}^s}\;dx
= \lim_{\stackrel{\hbox{\scriptsize $z\!\in\!\Omega$}}{z\rightarrow\theta}}\int_\Omega 
\frac{\psi(x)\,G_\Omega(x,z)}{{\delta(z)}^s}\;dx =
\lim_{\stackrel{\hbox{\scriptsize $z\!\in\!\Omega$}}{z\rightarrow\theta}}\frac{\phi(z)}{{\delta(z)}^s} =
D_s\phi(\theta).
$$\end{tfa}

Then, our space of test functions will be 
$$
\T(\Omega)=\left\lbrace\phi\in C(\R^n):
\left\lbrace\begin{array}{ll}
\Ds \phi=\psi & \hbox{ in }\Omega \\
\phi=0 & \hbox{ in }\C\Omega \\
E\phi=0 & \hbox{ in }\partial\Omega
\end{array}\right.,\ 
\hbox{ when }\psi\in C^\infty_0(\Omega)\right\rbrace.
$$
Note that the map $D_s$ is well-defined from $\T(\Omega)$ to $C(\partial\Omega)$
as a consequence of the results in \cite[Theorem 1.2]{rosserra}.
We give the following

\begin{defi} Given three Radon measures $\lambda\in\mathcal{M}(\Omega)$, 
$\mu\in\mathcal{M}(\C\Omega)$ and $\nu\in\mathcal{M}(\partial\Omega)$,
we say that a function $u\in L^1(\Omega)$
is a solution of 
$$
\left\lbrace\begin{array}{ll}
\Ds u=\lambda & \hbox{ in }\Omega \\
u=\mu & \hbox{ in }\C\Omega \\
Eu=\nu & \hbox{ on }\partial\Omega
\end{array}\right.
$$
if for every $\phi\in\T(\Omega)$ it is
\begin{equation}\label{weakdefi}
\int_\Omega u(x)\Ds \phi(x)\;dx\ =\ \int_\Omega\phi(x)\;d\lambda(x)
-\int_{\C\Omega}\Ds \phi(x)\;d\mu(x)+\int_{\partial\Omega}D_s\phi(\theta)\;d\nu(\theta)
\end{equation}
where $\displaystyle D_s\phi(\theta)=\lim_{\stackrel{\hbox{\scriptsize $x\in\Omega$}}{x\rightarrow\theta}}\frac{\phi(x)}{{\delta(x)}^s}$.
\end{defi}

\begin{prop}\label{strsol-weaksol}\rm Solutions provided by Theorem \ref{pointwise}
are solutions in the above $L^1$ sense.
\end{prop}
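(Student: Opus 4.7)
The plan is to simply plug the explicit representation \eqref{linearsol} for the pointwise solution $u$ into the left-hand side of \eqref{weakdefi} and apply Fubini together with the duality identities recorded in Lemma \ref{testspace}. Fix $\phi\in\T(\Omega)$ with $\Ds\phi|_\Omega=\psi\in C^\infty_c(\Omega)$. Since $\Ds\phi(x)=\psi(x)$ for $x\in\Omega$ and $u$ satisfies \eqref{linearsol} in $\Omega$, the left-hand side becomes
$$
\int_\Omega u(x)\,\Ds\phi(x)\,dx=\int_\Omega\psi(x)\left[\int_\Omega f(y)G_\Omega(y,x)\,dy-\int_{\C\Omega}g(y)\Ds G_\Omega(x,y)\,dy+\int_{\partial\Omega}h(\theta)M_\Omega(x,\theta)\,d\mathcal{H}(\theta)\right]dx.
$$

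Next I would exchange the order of integration in each of the three summands. The first term becomes $\int_\Omega f(y)\phi(y)\,dy$ after recalling that $\phi(y)=\int_\Omega\psi(x)G_\Omega(x,y)\,dx$ together with the symmetry $G_\Omega(x,y)=G_\Omega(y,x)$ (pointed out in the remark following the proof of the \textit{Second case} in Theorem \ref{pointwise}); this produces the $\int_\Omega\phi\,d\lambda$ contribution with $d\lambda=f\,dx$. The second term, after Fubini, contains the inner integral $\int_\Omega\psi(x)\Ds G_\Omega(x,y)\,dx$ which, by formula \eqref{repr-lapl} of Lemma \ref{testspace}, equals $\Ds\phi(y)$ for $y\in\C\Omega$, yielding $-\int_{\C\Omega}g(y)\Ds\phi(y)\,dy=-\int_{\C\Omega}\Ds\phi\,d\mu$. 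Finally, the third term contains $\int_\Omega M_\Omega(x,\theta)\psi(x)\,dx$ which, by \eqref{repr-deriv}, equals $D_s\phi(\theta)$, giving $\int_{\partial\Omega}h(\theta)D_s\phi(\theta)\,d\mathcal{H}(\theta)=\int_{\partial\Omega}D_s\phi\,d\nu$. Adding the three contributions yields exactly the right-hand side of \eqref{weakdefi}, as required.

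The only non-trivial point is the justification of Fubini in each of the three exchanges. For the $f$-term this is immediate since $f$ is bounded, $\psi$ compactly supported, and $G_\Omega\in L^1(\Omega\times\Omega)$ by \eqref{est-green}. For the $g$-term one combines the hypothesis \eqref{gintro} on $g$ with the estimate \eqref{est-poisson} on $-\Ds G_\Omega(x,y)$, which shows that $\psi(x)\,g(y)\,\Ds G_\Omega(x,y)$ is absolutely integrable over $\Omega\times\C\Omega$ (the factor $\delta(x)^s$ in \eqref{est-poisson} is harmless against $\psi\in C^\infty_c(\Omega)$, and the $\min\{\delta(y)^{-s},\delta(y)^{-n-2s}\}$ factor is precisely what \eqref{gintro} controls). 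For the $h$-term one uses $h\in C(\partial\Omega)$ and the two-sided bound \eqref{chen-martin} on $M_\Omega$, again contracting $\psi$ against the singularity.

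I expect the main technical obstacle to be verifying the absolute integrability needed for the $g$-term, because it is where the sharp behavior of the Poisson kernel at the boundary of $\Omega$ interacts with the admissibility class \eqref{gintro}; however, the estimates \eqref{est-poisson} and \eqref{est-pois2} already established in the preliminary section make this a routine verification rather than a genuinely hard step. Once Fubini is justified, the proof reduces to the three applications of the identities of Lemma \ref{testspace} sketched above.
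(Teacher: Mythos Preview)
Your proposal is correct and follows essentially the same approach as the paper: plug the representation formula \eqref{linearsol} into the left-hand side of \eqref{weakdefi}, apply Fubini to each of the three terms, and invoke the identities \eqref{repr-lapl} and \eqref{repr-deriv} from Lemma \ref{testspace}. The paper's proof is slightly terser and does not spell out the Fubini justifications you provide, but the structure is identical.
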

\begin{tfa}\rm Indeed, if 
$$
u(x)=\int_\Omega f(y)\,G_\Omega(y,x)\;dy-\int_{\C\Omega}g(y)\,\Ds G_\Omega(x,y)\;dy
+\int_{\partial\Omega}M(x,\theta)\,h(\theta)\;d\mathcal{H}(\theta),
$$
then for any $\phi\in\T(\Omega)$,
\begin{eqnarray*}
& & \int_\Omega u(x)\,\Ds\phi(x)\;dx = \\ 
& & = \int_\Omega \left[\int_\Omega f(y)\,G_\Omega(y,x)\;dy-\int_{\C\Omega}g(y)\,\Ds G_\Omega(x,y)\;dy
	+\int_{\partial\Omega}M(x,\theta)\,h(\theta)\;d\mathcal{H}(\theta)\right]\,\Ds\phi(x)\;dx \\ 
& & =\int_\Omega f(y)\underbrace{\left[\int_\Omega \Ds\phi(x)\,G_\Omega(y,x)\;dx\right]}_{=\ \phi(y)}\;dy-
	\int_{\C\Omega}g(y)\underbrace{\left[\int_\Omega\Ds\phi(x)\,\Ds G_\Omega(x,y)\;dx\right]}_{=\ \Ds\phi(y)}\;dy \\
& & \quad +\int_{\partial\Omega}h(\theta)\underbrace{\left[\int_\Omega M(x,\theta)\Ds\phi(x)\right]
 	}_{=\ D_s\phi(\theta)}d\mathcal{H}(\theta)
\end{eqnarray*}
where we have used both \eqref{repr-lapl} and \eqref{repr-deriv}.
\end{tfa}

\begin{proof}[Proof of Theorem \ref{existence-weak2}]
In case $\nu=0$, we claim that the solution is given by formula
$$
u(x)=\int_\Omega G_\Omega(y,x)\;d\lambda(y)-\int_{\C\Omega}\Ds G_\Omega(x,y)\;d\mu(y).
$$
Take $\phi\in\T(\Omega)$:
\begin{multline*}
\int_\Omega u(x)\,\Ds\phi(x)\;dx=
\int_\Omega \left[\int_\Omega G_\Omega(y,x)\;d\lambda(y)-\int_{\C\Omega}\Ds G_\Omega(x,y)\;d\mu(y)\right]\,\Ds\phi(x)\;dx=\\
=\int_\Omega\left[\int_\Omega \Ds\phi(x)\,G_\Omega(y,x)\;dx\right]\;d\lambda(y)-
\int_{\C\Omega}\left[\int_\Omega\Ds\phi(x)\,\Ds G_\Omega(x,y)\;dx\right]\;d\mu(y)=\\
=\int_\Omega\phi(y)\;d\lambda(y)-\int_{\C\Omega}\Ds\phi(y)\;d\mu(y)
\end{multline*}
again using \eqref{repr-lapl}.
Then, we claim that the function
$$
u(x)=\int_{\partial\Omega}M(x,\theta)\;d\nu(\theta)
$$
solves problem
$$
\left\lbrace\begin{array}{ll}
\Ds u=0 & \hbox{ in }\Omega, \\
u=0 & \hbox{ in }\C\Omega, \\
Eu=\nu & \hbox{ on }\partial\Omega.
\end{array}\right.
$$
This, along with the first part of the proof, proves our thesis.
Take $\phi\in\T(\Omega)$ and call $\psi=\Ds\phi|_\Omega\in C^\infty_c(\Omega)$.
Then
\begin{eqnarray*}
\int_\Omega\left(\int_{\partial\Omega}M(x,\theta)\;d\nu(\theta)\right)\psi(x)\;dx
& = & 
\int_{\partial\Omega}\left(\int_\Omega M(x,\theta)\psi(x)\;dx\right)\;d\nu(\theta) \\
& = & 
\int_{\partial\Omega}\left(\int_\Omega \psi(x)\,
\lim_{\stackrel{\hbox{\scriptsize $z\in\Omega$}}{z\rightarrow\theta}}\frac{G_\Omega(x,z)}{{\delta(z)}^s}\;dx\right)\;d\nu(\theta) \\
& = & 
\int_{\partial\Omega}\lim_{\stackrel{\hbox{\scriptsize $z\in\Omega$}}{z\rightarrow\theta}}\left(\int_\Omega 
\frac{\psi(x)\,G_\Omega(x,z)}{{\delta(z)}^s}\;dx\right)\;d\nu(\theta) \\
& = & 
\int_{\partial\Omega}\lim_{\stackrel{\hbox{\scriptsize $z\in\Omega$}}{z\rightarrow\theta}}\frac{\phi(z)}{{\delta(z)}^s}\;d\nu(\theta) 
\quad = \quad
\int_{\partial\Omega}D_s\phi(\theta)\;d\nu(\theta).
\end{eqnarray*}

The uniqueness is due to Lemma \ref{max-princweak2} below. 
Theorem \ref{reg-l1sol} below proves the estimate on the $L^1$ norm of the solution.
\end{proof}

\begin{lem}[Maximum Principle]\label{max-princweak2} Let $u\in L^1(\Omega)$ be a solution of
$$
\left\lbrace\begin{array}{ll}
\Ds u\leq 0 & \hbox{ in }\Omega, \\
u\leq 0 & \hbox{ in }\C\Omega, \\
Eu\leq 0 & \hbox{ on }\partial\Omega.
\end{array}\right.
$$
Then $u\leq 0$ a.e. in $\R^n$.
\end{lem}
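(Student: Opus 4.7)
My plan is to use the Stampacchia duality argument: test the subsolution inequality against nonnegative test functions built from $\T(\Omega)$, and exploit the sign of all the ingredients that appear in the weak formulation.

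First I would pick an arbitrary $\psi\in C^\infty_c(\Omega)$ with $\psi\geq 0$ and take $\phi\in\T(\Omega)$ to be the associated test function satisfying $\Ds\phi=\psi$ in $\Omega$, $\phi=0$ in $\C\Omega$, $E\phi=0$ on $\partial\Omega$. Theorem \ref{pointwise} and Proposition \ref{strsol-weaksol} guarantee that such a $\phi$ is available and, combined with Lemma \ref{maxprinc} (applied to $-\phi$), give that $\phi\geq 0$ in $\R^n$. I would then collect the sign information on the three other ingredients appearing in \eqref{weakdefi}:
\begin{itemize}
\item by \eqref{repr-lapl}, $\Ds\phi(x)=\int_\Omega\psi(z)\,\Ds G_\Omega(z,x)\,dz$ for $x\in\C\Omega$, and since $\Ds G_\Omega(z,\cdot)\leq 0$ on $\C\Omega$, we have $\Ds\phi\leq 0$ in $\C\Omega$;
\item by \eqref{repr-deriv}, $D_s\phi(\theta)=\int_\Omega M_\Omega(x,\theta)\,\psi(x)\,dx\geq 0$ since $M_\Omega\geq 0$.
\end{itemize}

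Next I would interpret the three inequalities $\Ds u\leq 0$, $u\leq 0$ in $\C\Omega$, $Eu\leq 0$ in the weak sense analogous to Theorem \ref{nl-cs}, namely:
\begin{equation*}
\int_\Omega u\,\Ds\phi\,dx\ \leq\ \int_\Omega\phi\,d\lambda-\int_{\C\Omega}\Ds\phi\,d\mu+\int_{\partial\Omega}D_s\phi\,d\nu
\end{equation*}
for every $\phi\in\T(\Omega)$ with $\Ds\phi\geq 0$, where $\lambda\leq 0$, $\mu\leq 0$ and $\nu\leq 0$ are the data carried by $u$. With the $\phi$ chosen above, all three contributions on the right are nonpositive: $\phi\geq 0$ against $\lambda\leq 0$ yields a nonpositive integral, $-\Ds\phi\geq 0$ on $\C\Omega$ against $\mu\leq 0$ yields a nonpositive integral, and $D_s\phi\geq 0$ against $\nu\leq 0$ also yields a nonpositive integral. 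Therefore
\begin{equation*}
\int_\Omega u(x)\,\psi(x)\,dx\ \leq\ 0 \qquad\text{for every }\psi\in C^\infty_c(\Omega),\ \psi\geq 0.
\end{equation*}

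By the fundamental lemma of the calculus of variations this forces $u\leq 0$ almost everywhere in $\Omega$, and combined with the hypothesis $u\leq 0$ in $\C\Omega$ we conclude $u\leq 0$ a.e. in $\R^n$. The only delicate point is checking that the test function $\phi$ built from a smooth $\psi$ genuinely has the three sign properties uniformly; this relies on the integrability of $\Ds G_\Omega$ in $\C\Omega$ and the nonnegativity of $M_\Omega$ already established in the previous lemmas, so it is a routine verification rather than a genuine obstacle.
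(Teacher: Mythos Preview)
Your argument is correct and follows essentially the same route as the paper: pick a nonnegative $\psi\in C^\infty_c(\Omega)$, take the associated $\phi\in\T(\Omega)$, check that $\phi\geq 0$, $\Ds\phi\leq 0$ in $\C\Omega$ and $D_s\phi\geq 0$ on $\partial\Omega$, and plug into the weak formulation to get $\int_\Omega u\psi\leq 0$. The only cosmetic difference is that the paper derives $\Ds\phi\leq 0$ on $\C\Omega$ directly from $\phi\geq 0$ and $\phi=0$ outside $\Omega$, whereas you invoke \eqref{repr-lapl}; both are equivalent.
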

\begin{tfa}\rm Take $\psi\in C^\infty_c(\Omega)$, $\psi\geq 0$ and
the associated $\phi\in\T(\Omega)$ for which $\Ds\phi|_\Omega=\psi$:
it is $\phi\geq 0$ in $\Omega$, in view of Lemma \ref{maxprinc}
This implies that for $y\in\C\Omega$ it is
$$
\Ds\phi(y)=-\A(n,s)\,\int_\Omega\frac{\phi(z)}{|y-z|^n}\;dz\leq0,
$$
and also $D_s\phi\geq 0$ throughout $\partial\Omega$.
In particular 
$$
\int_\Omega u\psi\leq 0
$$
as a consequence of \eqref{weakdefi}.
\end{tfa}

\subsection{Regularity theory}

\begin{theo}\label{reg-l1sol} Given three Radon measures $\lambda\in\mathcal{M}(\Omega)$, 
$\mu\in\mathcal{M}(\C\Omega)$ and $\nu\in\mathcal{M}(\partial\Omega)$,
consider the solution $u\in L^1(\Omega)$ of problem
$$
\left\lbrace\begin{array}{ll}
\Ds u=\lambda & \hbox{ in }\Omega, \\
u=\mu & \hbox{ in }\C\Omega, \\
Eu=\nu & \hbox{ on }\partial\Omega.
\end{array}\right.
$$
Then 
$$
\Arrowvert u\Arrowvert_{L^1(\Omega)}\leq C\left(
\Arrowvert\lambda\Arrowvert_{\mathcal{M}(\Omega,{\delta(x)}^s\,dx)}+
\Arrowvert\mu\Arrowvert_{\mathcal{M}(\C\Omega,{\delta(x)}^{-s}\wedge{\delta(x)}^{-n-2s}\,dx)}+
\Arrowvert\nu\Arrowvert_{\mathcal{M}(\partial\Omega)}\right).
$$
\end{theo}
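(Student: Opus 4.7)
The plan is to combine the explicit representation formula of Theorem \ref{existence-weak2} with three kernel estimates obtained by averaging in the ``source'' variable. Starting from
$$
u(x)=\int_\Omega G_\Omega(x,y)\;d\lambda(y)-\int_{\C\Omega}\Ds G_\Omega(x,y)\;d\mu(y)+\int_{\partial\Omega}M_\Omega(x,\theta)\;d\nu(\theta),
$$
I would take absolute values, integrate in $x\in\Omega$ and apply Fubini--Tonelli to swap the order of integration. The claim then reduces to the three pointwise inequalities
$$
\int_\Omega G_\Omega(x,y)\,dx\leq C{\delta(y)}^s,\quad
\int_\Omega |\Ds G_\Omega(x,y)|\,dx\leq C\min\{{\delta(y)}^{-s},{\delta(y)}^{-n-2s}\},\quad
\int_\Omega M_\Omega(x,\theta)\,dx\leq C,
$$
valid for $y\in\Omega$, $y\in\C\Omega$ and $\theta\in\partial\Omega$, respectively.

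The first bound follows at once from the symmetry $G_\Omega(x,y)=G_\Omega(y,x)$ recorded in Theorem \ref{pointwise}: the quantity $w(y):=\int_\Omega G_\Omega(y,x)\,dx$ is the pointwise solution of $\Ds w=1$ in $\Omega$ with $w=0$ in $\C\Omega$ and $Ew=0$, and \cite[Proposition 1.1]{rosserra} (already used in the paper) gives $w(y)\leq C{\delta(y)}^s$. The third bound is equally straightforward: the upper estimate in \eqref{chen-martin}, together with $\delta(x)\leq|x-\theta|$ for $x\in\Omega$ and $\theta\in\partial\Omega$, reduces the integral to $\int_\Omega|x-\theta|^{s-n}\,dx$, which is uniformly bounded in $\theta$ thanks to the boundedness of $\Omega$ and the integrability of $|\cdot|^{s-n}$ on bounded sets.

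The main obstacle is the Poisson kernel estimate. From \eqref{est-poisson} one isolates the prefactor ${\delta(y)}^{-s}{(1+\delta(y))}^{-s}$ and is left to control
$$
I(y)=\int_\Omega\frac{{\delta(x)}^s}{|x-y|^n}\,dx,\qquad y\in\C\Omega.
$$
For $\delta(y)\leq 1$ I would split the $x$-region according to whether $|x-y|\leq 2\delta(y)$ or not, exploiting $\delta(x)\leq|x-y|+\delta(y)$ to dominate ${\delta(x)}^s$ by $|x-y|^s+{\delta(y)}^s$: the two resulting integrals are uniformly bounded, giving $I(y)\leq C$ and hence a ${\delta(y)}^{-s}$ bound on the whole Poisson integral. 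For $\delta(y)\geq 1$, $\Omega$ is contained in the spherical shell $B_{\delta(y)+\hbox{diam}\,\Omega}(y)\setminus B_{\delta(y)}(y)$ (since $|x-y|\geq\delta(y)$ for any $x\in\Omega$) and ${\delta(x)}^s$ is bounded by a constant; a direct computation yields $I(y)\leq C{\delta(y)}^{-n}$, which combines with the prefactor (now of order ${\delta(y)}^{-2s}$) to produce the ${\delta(y)}^{-n-2s}$ bound. Taking the minimum of the two regimes and plugging the three estimates back into the Fubini inequality yields the announced $L^1$ bound.
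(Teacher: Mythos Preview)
Your proof is correct and follows the same overall strategy as the paper: representation formula, Fubini--Tonelli, and three kernel estimates. The only difference is in packaging: the paper introduces the torsion function $\zeta$ solving $\Ds\zeta=1$ in $\Omega$, $\zeta=0$ outside, and recognises the three quantities $\int_\Omega G_\Omega(x,y)\,dx$, $\int_\Omega(-\Ds G_\Omega(x,y))\,dx$ and $\int_\Omega M_\Omega(x,\theta)\,dx$ as $\zeta(y)$, $-\Ds\zeta(y)$ and $D_s\zeta(\theta)$ respectively, then bounds each via the known behaviour of $\zeta$ (in particular computing $-\Ds\zeta(y)=\A(n,s)\int_\Omega\zeta(z){|z-y|}^{-n-2s}\,dz$ directly from the definition, using $\zeta\leq C\delta^s$). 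You instead bound each kernel separately using the Chen estimates \eqref{est-poisson} and \eqref{chen-martin}. The two routes are equivalent in content; the paper's is slightly more conceptual, yours slightly more hands-on, and your splitting argument for $I(y)$ can in fact be shortened by noting $\delta(x)\leq|x-y|$ for all $x\in\Omega$, $y\in\C\Omega$.
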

\begin{tfa}\rm 
Consider $\zeta$ to be the solution of 
$$
\left\lbrace\begin{array}{ll}
\Ds\zeta=1 & \hbox{ in }\Omega \\
\zeta=0 & \hbox{ in }\C\Omega \\
E\zeta=0 & \hbox{ on }\partial\Omega
\end{array}\right.
$$
which we know to satisfy $0\leq\zeta(x)\leq C\,\delta(x)^s$ in $\Omega$,
see \cite{rosserra}.
Note also that, by approximating $\zeta$ with functions in $\T(\Omega)$ and
by \ref{repr-lapl}, for $x\in\C\Omega$
$$
\Ds\zeta(x)=\int_\Omega\Ds G_\Omega(z,x)\;dz,\qquad x\in\C\Omega,
$$
and therefore, when $x\in\C\Omega$ and $\delta(x)<1$,
$$
0\leq-\Ds\zeta(x)\leq \int_\Omega\frac{C\,\delta(y)^s}{|y-x|^{n+2s}}\;dy\leq
C\int_{\delta(x)}^{+\infty}\frac{dt}{t^{1+s}}=\frac{C}{\delta(x)^s},
$$
while for $x\in\C\Omega$ and $\delta(x)\geq1$
$$
0\leq-\Ds\zeta(x)\leq \int_\Omega\frac{C\,\delta(y)^s}{|y-x|^{n+2s}}\;dy\leq
\frac{C}{\delta(x)^{n+2s}}\int_\Omega\delta(y)^s\;dy.
$$
Furthermore, $D_s\zeta$ is a well-defined function on $\partial\Omega$,
again thanks to the results in \cite[Proposition 1.1]{rosserra}. Indeed, consider
an increasing sequence ${\{\psi_k\}}_{k\in\N}\subseteq C^\infty_c(\Omega)$,
such that $0\leq\psi_k\leq 1$, $\psi_k\uparrow 1$ in $\Omega$.
Call $\phi_k$ the function in $\T(\Omega)$ associated with $\psi_k$,
i.e. $\Ds\phi_k|_\Omega=\psi_k$.
In this setting
$$
0\leq\lim_{\stackrel{\hbox{\scriptsize $z\!\in\!\Omega$}}{z\rightarrow\theta}}
\frac{\zeta(z)-\phi_k(z)}{{\delta(z)}^s}=
\lim_{\stackrel{\hbox{\scriptsize $z\!\in\!\Omega$}}{y\rightarrow\theta}}
\int_\Omega\frac{G_\Omega(y,z)}{{\delta(z)}^s}\left(1-\psi_k(y)\right)\;dy
\leq\int_\Omega M_\Omega(y,\theta)\left(1-\psi_k(y)\right)\;dy
\xrightarrow{k\rightarrow+\infty}0,
$$
hence $D_s\zeta(\theta)$ is well-defined and it is
$$
D_s\zeta(\theta)=\int_\Omega M_\Omega(x,\theta)\;dx.
$$
Finally, we underline how $D_s\zeta\in L^\infty(\Omega)$,
thanks to \eqref{chen-martin}.

We split the rest of the proof by using the integral representation of $u$:
\begin{description}
\item[-] the mass induced by the right-hand side is
$$
\int_\Omega\left|\int_\Omega G_\Omega(y,x)\;d\lambda(y)\right|\;dx\leq
\int_\Omega\int_\Omega G_\Omega(y,x)\;dx\;d|\lambda|(y)\leq \int_\Omega\zeta(y)\;d|\lambda|(y)
\leq C\int_\Omega\delta(y)^s\;d|\lambda|(y),
$$
\item[-] the one induced by the external datum
\begin{multline*}
\int_\Omega\left|\int_{\C\Omega} \Ds G_\Omega(x,y)\;d\mu(y)\right|\;dx\leq
\int_{\C\Omega}-\int_\Omega \Ds G_\Omega(x,y)\;dx\;d|\mu|(y)=\\
=\int_{\C\Omega}-\Ds\zeta(y)\;d|\mu|(y)\leq
\int_{\C\Omega}\left(\frac{C}{\delta(y)^s}\wedge\frac{C}{\delta(y)^{n+2s}}\right)\;d|\mu|(y),
\end{multline*}
\item[-] finally the mass due to the boundary behavior
$$
\int_\Omega\left|\int_{\partial\Omega} M_\Omega(x,\theta)\;d\nu(\theta)\right|\;dx\leq
\int_{\partial\Omega}\int_\Omega M_\Omega(x,\theta)\;dx\;d|\nu|(\theta)=
\int_{\partial\Omega}D_s\zeta(\theta)\;d|\nu|(\theta)
\leq \Arrowvert D_s\zeta\Arrowvert_\infty\,|\nu|(\partial\Omega).
$$
\end{description}
Note that the smoothness of the domain is needed 
only to make the last point go through,
and we can repeat the proof in case $\nu=0$
without requiring $\partial\Omega\in C^{1,1}$.
\end{tfa}

To gain higher integrability on a solution,
the first step we take is the following

\begin{lem}\label{reg-t} For any $q>n/s$ there exists 
$C=C(n,s,\Omega,q)$ such that for all $\phi\in\T(\Omega)$
$$
\left\Arrowvert\frac{\phi}{\delta^s}\right\Arrowvert_{L^\infty(\Omega)}
\leq C\, \Arrowvert\Ds\phi\Arrowvert_{L^q(\Omega)}.
$$
\end{lem}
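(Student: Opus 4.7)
\emph{Proof sketch.} Given $\phi\in\T(\Omega)$, write $\psi:=\Ds\phi|_\Omega\in C^\infty_c(\Omega)$. By the representation formula \eqref{representation} applied to $\phi$ (noting that $\phi\equiv 0$ outside $\Omega$ and $E\phi\equiv 0$), we get the pointwise identity
$$
\phi(x)\ =\ \int_\Omega G_\Omega(x,y)\,\psi(y)\;dy,\qquad x\in\Omega.
$$
By H\"older's inequality, with $q'=q/(q-1)<n/(n-s)$,
$$
\frac{|\phi(x)|}{\delta(x)^s}\ \leq\ \|\psi\|_{L^q(\Omega)}\cdot\delta(x)^{-s}\left(\int_\Omega G_\Omega(x,y)^{q'}\;dy\right)^{1/q'},
$$
so the whole statement reduces to the uniform estimate
\begin{equation}\label{planKey}
\left(\int_\Omega G_\Omega(x,y)^{q'}\;dy\right)^{1/q'}\ \leq\ C\,\delta(x)^s,\qquad x\in\Omega,
\end{equation}
for some $C=C(n,s,\Omega,q)$.

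To prove \eqref{planKey} I would exploit the two-sided bound \eqref{est-green}, i.e.
$G_\Omega(x,y)\leq c_2\,|x-y|^{-n}\bigl(|x-y|^2\wedge\delta(x)\delta(y)\bigr)^s$,
splitting $\Omega$ into three regions according to the relative size of $|x-y|$ and $\delta(x)$. Set $r=\delta(x)$; in the inner region $\{|x-y|\leq r/2\}$ one has $\delta(y)\geq r/2$, hence $|x-y|^2\leq\delta(x)\delta(y)$ and $G_\Omega(x,y)\leq c\,|x-y|^{2s-n}$. In the intermediate region $\{r/2<|x-y|\leq 2r\}$ one has $\delta(y)\leq 3r$, giving $G_\Omega(x,y)\leq c\,\delta(x)^{2s}|x-y|^{-n}$. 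In the outer region $\{|x-y|>2r\}$ one uses $\delta(y)\leq\delta(x)+|x-y|\leq\tfrac32|x-y|$, hence $G_\Omega(x,y)\leq c\,\delta(x)^s|x-y|^{s-n}$. Computing each piece in spherical coordinates produces, in each case, a contribution of the form $\delta(x)^{(2s-n)q'+n}$ on the inner and intermediate regions, and $\delta(x)^{sq'}\cdot\mathrm{diam}(\Omega)^{(s-n)q'+n}$ on the outer one. Taking $q'$-th roots and recalling $1/q'=1-1/q$, the first two give $\delta(x)^{2s-n/q}$ while the third gives $\delta(x)^s\cdot C(\mathrm{diam}\Omega)$.

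All three estimates are of order at most $\delta(x)^s$ provided $2s-n/q\geq s$, which is exactly the assumption $q>n/s$. The convergence of the radial integrals on the inner and outer regions is dictated, respectively, by the bounds $(2s-n)q'+n>0$ and $(s-n)q'+n>0$: both follow from $q'<n/(n-s)<n/(n-2s)$, so no singularity arises. This establishes \eqref{planKey} and hence the lemma.

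The main difficulty is the threshold nature of the hypothesis $q>n/s$: the intermediate region is the one saturating the estimate, and the proof must place the $\delta(x)^s$ decay precisely there without losing integrability near the diagonal. The three-region splitting based on the interplay between $|x-y|$ and $\delta(x)$, combined with the sharp Green function bound \eqref{est-green}, is the natural way to expose this balance.
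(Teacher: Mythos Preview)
Your argument is correct. The setup---representation via $G_\Omega$, H\"older, and reduction to the uniform bound \eqref{planKey}---is exactly what the paper does. The difference lies in how \eqref{planKey} is obtained. The paper invokes a slightly different form of the Chen estimate, namely
$$
G_\Omega(x,y)\ \leq\ \frac{c\,\delta(x)^s}{|x-y|^n}\bigl(|x-y|^s\wedge\delta(y)^s\bigr),
$$
(equation (2.14) in \cite{chen}, as opposed to (2.13), which is \eqref{est-green}). This form already factors out $\delta(x)^s$, so after the crude bound $|x-y|^s\wedge\delta(y)^s\leq|x-y|^s$ one immediately gets $G_\Omega(x,y)/\delta(x)^s\leq c\,|x-y|^{s-n}$, and the $L^{q'}$ integral is uniformly finite precisely when $q'<n/(n-s)$. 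No region splitting is needed.

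Your route, working from the symmetric bound \eqref{est-green} stated in the paper itself, is more self-contained but costs you the three-region decomposition. It recovers the same threshold: your inner and intermediate regions produce the exponent $2s-n/q$, which dominates $s$ exactly when $q>n/s$. So both proofs identify the same mechanism; the paper's is a one-liner because it imports the more convenient Green function estimate.
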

\begin{tfa}\rm Call as usual $\psi=\Ds\phi|_\Omega\in C^\infty_c(\Omega)$.
Let us work formally
$$
\left\Arrowvert\frac{\phi}{\delta^s}\right\Arrowvert_{L^\infty(\Omega)}
\leq \sup_{x\in\Omega}\,\frac{1}{\delta(x)^s}\int_\Omega G_\Omega(x,y)\,|\psi(y)|\;dy
\leq \sup_{x\in\Omega}\,\frac{\left\Arrowvert G_\Omega(x,\cdot)\right\Arrowvert_{L^p(\Omega)}}{\delta(x)^s}
\cdot \Arrowvert\Ds\phi\Arrowvert_{L^q(\Omega)},
$$
where $\frac{1}{p}+\frac{1}{q}=1$. Thus we need only to understand 
for what values of $p$ we are not writing a trivial inequality.
The main tool here is inequality
$$
G_\Omega(x,y)\leq \frac{c\,\delta(x)^s}{|x-y|^n}\left(|x-y|^s\wedge\delta(y)^s\right),
$$
which holds in $C^{1,1}$ domains, see \cite[equation 2.14]{chen}.
We have then
$$
\int_\Omega\left|\frac{G_\Omega(x,y)}{\delta(x)^s}\right|^p\;dy\leq
c\int_\Omega \frac{1}{|x-y|^{np}}\left(|x-y|^{sp}\wedge\delta(y)^{sp}\right)\;dy
\leq c\int_\Omega \frac{1}{|x-y|^{(n-s)p}}
$$
which is uniformly bounded in $x$ for $p<\frac{n}{n-s}$.
This condition on $p$ becomes $q>\frac{n}{s}$.
\end{tfa}

In view of this last lemma,
we are able to provide the following theorem,
which is the fractional counterpart of a classical result
(see e.g. \cite[Proposition A.9.1]{dupaigne-book}).

\begin{theo}\label{reg} For any $p<\frac{n}{n-s}$, there exists a constant $C$ such that
any solution $u\in L^1(\Omega)$ of problem
$$
\left\lbrace\begin{array}{ll}
\Ds u=\lambda & \hbox{ in }\Omega \\
u=0 & \hbox{ in }\C\Omega \\
Eu=0 & \hbox{ on }\partial\Omega
\end{array}\right.
$$
has a finite $L^p$-norm controlled by
$$
\Arrowvert u\Arrowvert_{L^p(\Omega)}\leq C\,
\Arrowvert\lambda\Arrowvert_{L^1(\Omega,{\delta(x)}^s dx)}
$$
\end{theo}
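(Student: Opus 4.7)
The plan is to prove this by a duality argument that uses the weak formulation together with Lemma \ref{reg-t}. First, observe that the conjugate exponent condition $p<\frac{n}{n-s}$ is exactly equivalent to $q>\frac{n}{s}$ where $\frac{1}{p}+\frac{1}{q}=1$, which is precisely the range in which Lemma \ref{reg-t} supplies a bound of the form $\|\phi/\delta^s\|_{L^\infty(\Omega)}\leq C\|\Ds\phi\|_{L^q(\Omega)}$.

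The strategy is to compute $\|u\|_{L^p(\Omega)}$ by testing against functions in $L^q(\Omega)$. Given any $\psi\in C^\infty_c(\Omega)$, let $\phi\in\T(\Omega)$ be the unique test function with $\Ds\phi|_\Omega=\psi$ (provided by the solvability part of Theorem \ref{pointwise}). Plugging $\phi$ into the weak formulation \eqref{weakdefi} and using that $\mu=0$ and $\nu=0$ gives
\[
\int_\Omega u(x)\,\psi(x)\;dx\ =\ \int_\Omega \phi(x)\;d\lambda(x).
\]
The right-hand side is then estimated as
\[
\left|\int_\Omega \phi\;d\lambda\right|\ \leq\ \left\|\frac{\phi}{\delta^s}\right\|_{L^\infty(\Omega)}\int_\Omega\delta(x)^s\;d|\lambda|(x)\ \leq\ C\,\|\Ds\phi\|_{L^q(\Omega)}\,\|\lambda\|_{L^1(\Omega,\delta^s dx)}\ =\ C\,\|\psi\|_{L^q(\Omega)}\,\|\lambda\|_{L^1(\Omega,\delta^s dx)},
\]
using Lemma \ref{reg-t} in the middle step, which applies exactly because $q>n/s$.

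Taking the supremum of $\int_\Omega u\psi$ over $\psi\in C^\infty_c(\Omega)$ with $\|\psi\|_{L^q(\Omega)}\leq 1$, and invoking the $(L^p,L^q)$ duality together with the density of $C^\infty_c(\Omega)$ in $L^q(\Omega)$, one recovers
\[
\|u\|_{L^p(\Omega)}\ \leq\ C\,\|\lambda\|_{L^1(\Omega,\delta^s dx)}.
\]
Since $u\in L^1(\Omega)$ is already known from Theorem \ref{reg-l1sol}, this duality step is legitimate (the supremum is a priori finite for $u$ replaced by its truncations, and one concludes by monotone convergence on $|u|^p$).

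The main obstacle is ensuring that Lemma \ref{reg-t} can indeed be invoked for a test function whose Laplacian is an arbitrary $\psi\in C^\infty_c(\Omega)$ and that the duality argument reaches the sharp exponent; the former is built into the definition of $\T(\Omega)$, while the latter rests on the observation that $q>n/s\Longleftrightarrow p<n/(n-s)$. Everything else is a routine Fubini/density argument, and the constant $C$ depends only on $n,s,\Omega$ and $p$ through the constant in Lemma \ref{reg-t}.
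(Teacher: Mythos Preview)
Your proof is correct and follows essentially the same duality argument as the paper: test the weak formulation against $\phi\in\T(\Omega)$ with $\Ds\phi|_\Omega=\psi\in C^\infty_c(\Omega)$, apply Lemma~\ref{reg-t} to bound $\|\phi/\delta^s\|_{L^\infty}$ by $\|\psi\|_{L^q}$, and conclude by $(L^p,L^q)$ duality and density. Your additional remarks on why the duality step is legitimate (via truncation and monotone convergence) are a nice touch that the paper leaves implicit.
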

\begin{tfa}\rm For any $\psi\in C^\infty_c(\Omega)$, let $\phi\in\T(\Omega)$ be
chosen in such a way that $\Ds\phi|_\Omega=\psi$. We have
$$
\left|\int_\Omega u(x)\,\psi(x)\;dx\right|=
\left|\int_\Omega \phi(x)\;d\lambda(x)\right|\leq
\int_\Omega\frac{|\phi(x)|}{\delta(x)^s}\cdot\delta(x)^s\;d|\lambda|(x)\leq
C\Arrowvert\psi\Arrowvert_{L^q(\Omega)}\int_\Omega\delta(x)^s\;d|\lambda|(x).
$$
where $q$ is the conjugate exponent of $p$, according to Lemma \ref{reg-t}.
By density of $C^\infty_c(\Omega)$ in $L^q(\Omega)$ and 
the isometry between $L^p(\Omega)$ and the dual space of $L^q(\Omega)$,
we obtain our thesis.
\end{tfa}

\subsection{Asymptotic behaviour at the boundary}\label{bblin-sec}

\subsubsection{Right-hand side blowing up at the boundary: proof of \eqref{udelta}}
\label{rhs-blowing}

In this paragraph we study the boundary behaviour
of the solution $u$ to the problem
$$
\left\lbrace\begin{array}{ll}
\displaystyle\Ds u(x)=\frac{1}{{\delta(x)}^\beta} & \hbox{ in }\Omega,\ 0<\beta<1+s \\
u=0 & \hbox{ in }\C\Omega \\
Eu=0 & \hbox{ on }\partial\Omega.
\end{array}\right.
$$
then
$$
u(x)=\int_B\frac{G_B(x,y)}{{\delta(y)}^\beta}\;dy
$$
and by using \eqref{est-green}, up to multiplicative constants,
$$
u(x)\leq\int_B \frac{\left[{|x-y|}^2\wedge\delta(x)\delta(y)\right]^s}{{|x-y|}^n}
\cdot\frac{dy}{{\delta(y)}^\beta}.
$$
Set
$$
\eps:=\delta(x)\hbox{ and }x=(1-\eps)e_1,\qquad
\theta:=\frac{y}{|y|},\,r:=\delta(y)\hbox{ and }y=(1-r)\theta,
$$
and rewrite
$$
u(x)\leq\int_{S^{n-1}}\int_0^1 \frac{\left[{|(1-\eps)e_1-(1-r)\theta|}^2\wedge\eps r\right]^s}{{|(1-\eps)e_1-(1-r)\theta|}^n}
\cdot\frac{{(1-r)}^{n-1}}{{r}^\beta}\;dr\;d\mathcal{H}^{n-1}(\theta).
$$
Split the angular variable in $\theta=(\theta_1,\theta')$ where
$\theta_1=\langle\theta,e_1\rangle$ and $\theta_1^2+|\theta'|^2=1$: then, for a general $F$,
$$
\int_{S^{n-1}} F(\theta)d\mathcal{H}^{n-1}(\theta)=
\int_{S^{n-2}}\left(\int_{-1}^1\left(1-\theta_1^2\right)^{(n-3)/2} F(\theta_1,\theta')\;d\theta_1\right)\;d\mathcal{H}^{n-2}(\theta'),
$$
and we write
\begin{multline*}
u(x)\leq \\
\int_{S^{n-2}}\int_0^1\left(1-\theta_1^2\right)^{(n-3)/2}\int_0^1
\frac{\left\{[(1-\eps)^2+(1-r)^2-2(1-\eps)(1-r)\theta_1]\wedge\eps r\right\}^s(1-r)^{n-1}}{{[(1-\eps)^2+(1-r)^2-2(1-\eps)(1-r)\theta_1]}^{n/2}\;{r}^\beta}
\,dr\,d\theta_1\,d\mathcal{H}^{n-2}(\theta') \\
=\mathcal{H}^{n-2}(S^{n-2})\int_0^1\left(1-\theta_1^2\right)^{(n-3)/2}\int_0^1
\frac{\left\{[(1-\eps)^2+(1-r)^2-2(1-\eps)(1-r)\theta_1]\wedge\eps r\right\}^s{(1-r)}^{n-1}}{{[(1-\eps)^2+(1-r)^2-2(1-\eps)(1-r)\theta_1]}^{n/2}\;{r}^\beta}\,dr\,d\theta_1.
\end{multline*}
From now on we will drop all multiplicative constants
and all inequalities will have to be interpreted to hold up to constants.
Let us apply a first change of variables
$$
t=\frac{1-r}{1-\eps}\quad\longleftrightarrow\quad r=1-(1-\eps)t
$$
to obtain
\begin{align}
u(x)\leq\int_0^1\left(1-\theta_1^2\right)^{(n-3)/2}\int_0^{1/(1-\eps)}
\frac{\left\{[(1-\eps)^2+(1-\eps)^2t^2-2(1-\eps)^2t\theta_1]\wedge\eps(1-(1-\eps)t)\right\}^s}
{{[(1-\eps)^2+(1-\eps)^2t^2-2(1-\eps)^2t\theta_1]}^{n/2}}
\times \nonumber\\
\times \frac{(1-\eps)^nt^{n-1}}{{(1-(1-\eps)t)}^\beta}\;dt\;d\theta_1 \nonumber \\
=\int_0^1\left(1-\theta_1^2\right)^{(n-3)/2}\int_0^{1/(1-\eps)}
\frac{\left[(1-\eps)^2(1+t^2-2t\theta_1)\wedge\eps(1-(1-\eps)t)\right]^s}
{{[1+t^2-2t\theta_1]}^{n/2}}\cdot\frac{t^{n-1}}{{(1-(1-\eps)t)}^\beta}\;dt\;d\theta_1 \nonumber \\
\leq\int_0^{1/(1-\eps)}\int_0^1
\frac{\left\{[(1-t)^2+2t\sigma]\wedge\eps[1-(1-\eps)t]\right\}^s}
{{[(1-t)^2+2t\sigma]}^{n/2}}\,{\sigma}^{(n-3)/2}
\;d\sigma\;\frac{t^{n-1}}{{(1-(1-\eps)t)}^\beta}\;dt 	\label{67},
\end{align}
where $\sigma=1-\theta_1$.
We compute now the integral in the variable $\sigma$.
Let $\sigma'$ be defined by equality
$$
(1-t)^2+2t\sigma'=\eps[1-(1-\eps)t]
\qquad\Longleftrightarrow\qquad
\sigma'=\frac{\eps[1-(1-\eps)t]-(1-t)^2}{2t}
$$
and let
$$
\sigma^*=\max\{\sigma',0\}.
$$
The quantity $\sigma^*$ equals $0$ if and only if
$$
\eps[1-(1-\eps)t]\leq(1-t)^2
$$
and it is easy to verify that this happens whenever
$$
t\leq t_1(\eps):=1-\frac{\eps-\eps^2+\eps\sqrt{\eps^2-2\eps+5}}{2}
\qquad\hbox{ or }\qquad
t\geq t_2(\eps):=1+\frac{-\eps+\eps^2+\eps\sqrt{\eps^2-2\eps+5}}{2}.
$$
\begin{rmk}\rm $t_2(\eps)<\frac{1}{1-\eps}$, since for small $\eps>0$
\begin{multline*}
1+\frac{-\eps+\eps^2+\eps\sqrt{\eps^2-2\eps+5}}{2}<\frac{1}{1-\eps}
\Longleftrightarrow
\frac{-1+\eps+\sqrt{\eps^2-2\eps+5}}{2}<\frac{1}{1-\eps}
\Longleftrightarrow\\
\Longleftrightarrow
\sqrt{\eps^2-2\eps+5}<\frac{2}{1-\eps}+1-\eps.
\end{multline*}
Also, $0<t_1(\eps)<1$. Finally $\sigma'\leq 1$, since
$$
\eps[1-(1-\eps)t]-(1-t)^2\leq 2t
\qquad\Longleftrightarrow\qquad
\eps[1-(1-\eps)t]\leq 1+t^2
$$
which is true for any positive $\eps<1$.
\end{rmk}

Hence
$$
\sigma^*=\max\{\sigma',0\}=\left\lbrace
\begin{array}{ll}
\displaystyle
\frac{\eps[1-(1-\eps)t]-(1-t)^2}{2t} &
\hbox{ when }\; \displaystyle
t\in\left(t_1(\eps),t_2(\eps)\right)\subseteq\left(0,\frac{1}{1-\eps}\right), \\
0 & \hbox{ when }\; \displaystyle
t\in\left(0,t_1(\eps)\right)\cup\left(t_2(\eps),\frac{1}{1-\eps}\right).
\end{array}\right.
$$
We split now integral \eqref{67} into four pieces, as following
\begin{equation}\label{splitted}
\int_0^{\frac{1}{1-\eps}}dt\int_0^1d\sigma\quad=\quad
\int_{t_1(\eps)}^{t_2(\eps)}dt\int_0^{\sigma^*}d\sigma\ 
+\ \int_{t_1(\eps)}^{t_2(\eps)}dt\int_{\sigma^*}^1d\sigma\ 
+\ \int_0^{t_1(\eps)}dt\int_0^1d\sigma\ 
+\ \int_{t_2(\eps)}^\frac{1}{1-\eps}dt\int_0^1d\sigma,
\end{equation}
and we treat each of them separately:
\begin{itemize}
\item for the first one we have
\begin{multline*}
\int_0^{\sigma^*}\frac{{\sigma}^{(n-3)/2}}{{[(1-t)^2+2t\sigma]}^{n/2-s}}\;d\sigma
\leq\frac{1}{{(2t)}^{(n-3)/2}}\int_0^{\sigma^*}
{[(1-t)^2+2t\sigma]}^{s-3/2}\;d\sigma\leq\\
\leq \frac{1}{{t}^{(n-1)/2}}\frac{1}{s-\frac{1}{2}}
\left[\eps^{s-1/2}\left(1-(1-\eps)t\right)^{s-1/2}-|1-t|^{2s-1}\right]
\end{multline*}
and therefore the first integral is less than
\begin{equation}\label{1}
\int_{t_1(\eps)}^{t_2(\eps)}
\frac{t^{(n-1)/2}}{\left(1-(1-\eps)t\right)^\beta}\cdot
\frac{1}{s-\frac{1}{2}}
\left[\eps^{s-1/2}\left(1-(1-\eps)t\right)^{s-1/2}-|1-t|^{2s-1}\right]\;dt;
\end{equation}
note now that $t_2(\eps)-t_1(\eps)\sim\eps$ and $\frac{1}{1-\eps}-1\sim\eps$
and thus \eqref{1} is of magnitude
$$
\eps^{-\beta+2s};
$$
\item for the second integral we have
\begin{multline*}
\int_{\sigma^*}^1\frac{{\sigma}^{(n-3)/2}}{{[(1-t)^2+2t\sigma]}^{n/2}}\;d\sigma
\leq\frac{1}{{(2t)}^{(n-3)/2}}\int_0^{\sigma^*}
{[(1-t)^2+2t\sigma]}^{-3/2}\;d\sigma\leq\\
\leq \frac{1}{{t}^{(n-1)/2}}
\left[\eps^{-1/2}\left(1-(1-\eps)t\right)^{-1/2}-(1+t^2)^{-1/2}\right]
\end{multline*}
and therefore the second integral is less than
\begin{equation}\label{2}
\int_{t_1(\eps)}^{t_2(\eps)}
\frac{t^{(n-1)/2}}{\left(1-(1-\eps)t\right)^\beta}\cdot
\left[\eps^{-1/2}\left(1-(1-\eps)t\right)^{-1/2}-(1+t^2)^{-1/2}\right]\;dt;
\end{equation}
and \eqref{2} is of magnitude
$$
\eps^{-\beta+2s};
$$
\item for the third integral we have
$$
\int_0^1\frac{{\sigma}^{(n-3)/2}}{{[(1-t)^2+2t\sigma]}^{n/2}}\;d\sigma
\leq \frac{1}{t^{(n-3)/2}}\int_0^1\frac{d\sigma}{\left[(1-t)^2+2t\sigma\right]^{3/2}}
\leq \frac{1}{t^{(n-1)/2}}\cdot\frac{1}{|1-t|}
$$
so that the third integral is less than
\begin{equation}\label{3}
\begin{split}
\int_0^{t_1(\eps)}
\frac{t^{(n-1)/2}\,\eps^s}{\left(1-(1-\eps)t\right)^{\beta-s}}
\cdot\frac{dt}{1-t}\leq
\eps^s\int_0^{t_1(\eps)}
\frac{dt}{(1-t)\,\left(1-(1-\eps)t\right)^{\beta-s}}\leq\\
\leq\eps^s\int_0^{t_1(\eps)}
\frac{dt}{\left(1-t\right)^{\beta-s+1}}=
\frac{\eps^s\left(1-t_1(\eps)\right)^{\beta-s}}{\beta-s}
-\frac{\eps^s}{\beta-s}
\end{split}
\end{equation}
if $\beta\neq s$
and \eqref{3} is of magnitude
\begin{eqnarray*}
& \eps^s & \hbox{ for }0\leq\beta< s,\\
& \eps^s\log\frac{1}{\eps} & \hbox{ for }\beta=s,\\
& \eps^{-\beta+2s} & \hbox{ for }s<\beta<1+s;
\end{eqnarray*}
\item for the fourth integral we have
$$
\int_0^1\frac{{\sigma}^{(n-3)/2}}{{[(1-t)^2+2t\sigma]}^{n/2}}\;d\sigma
\leq \frac{1}{t^{(n-1)/2}}\cdot\frac{1}{|1-t|}
$$
so that the fourth integral is less than
\begin{equation}\label{4}
\int_{t_2(\eps)}^{\frac{1}{1-\eps}}
\frac{t^{(n-1)/2}\,\eps^s}{\left(1-(1-\eps)t\right)^{\beta-s}}
\cdot\frac{dt}{t-1}\leq
\eps^s\int_{t_2(\eps)}^{\frac{1}{1-\eps}}
\frac{dt}{(t-1)\,\left(1-(1-\eps)t\right)^{\beta-s}}
\end{equation}
and \eqref{4} is of magnitude
\begin{eqnarray*}
\eps^s\int_{t_2(\eps)}^{\frac{1}{1-\eps}}\frac{dt}{\left(t-1\right)^{\beta-s+1}} & \sim\eps^s &
\hbox{ for }0\leq\beta< s, \\
\eps^s\int_{t_2(\eps)}^{\frac{1}{1-\eps}}\frac{dt}{t-1}
& \sim\eps^s\log\frac{1}{\eps} & \hbox{ for }\beta=s, \\
\frac{\eps^s}{t_2(\eps)-1}\int_{t_2(\eps)}^{\frac{1}{1-\eps}}
\frac{dt}{\left(1-(1-\eps)t\right)^{\beta-s}}
& \sim\eps^{-\beta+2s} &
\hbox{ for }s<\beta<1+s.
\end{eqnarray*}
\end{itemize}
Resuming the information collected so far,
what we have gained is that, up to constants,
\begin{equation}\label{rhs-estimate}
u(x)\leq \left\lbrace
\begin{array}{ll}
\displaystyle
{\delta(x)}^s &
\hbox{ for }0\leq\beta< s, \\ 
\displaystyle
{\delta(x)}^s\log\frac{1}{\delta(x)} & \hbox{ for }\beta=s,\\
\displaystyle
{\delta(x)}^{-\beta+2s} &
\hbox{ for }s<\beta<1+s.
\end{array}\right.
\end{equation}

This establish an upper bound for the solutions.
Note now that the integral \eqref{67} works also as
a lower bound, of course up to constants.
Using the split expression \eqref{splitted} we entail
\begin{multline*}
u(x)\geq\int_0^{\frac{1}{1-\eps}}
\int_0^1\frac{\left\{[(1-t)^2+1+2t\sigma]\wedge\eps[1-(1-\eps)t]\right\}^s}
{{[(1-t)^2+2t\sigma]}^{n/2}}\,{\sigma}^{(n-3)/2}
\;d\sigma\;\frac{t^{n-1}}{{(1-(1-\eps)t)}^\beta}
\;dt\geq \\ \geq
\int_0^{t_1(\eps)}\frac{t^{n-1}\,\eps^s}{\left(1-(1-\eps)t\right)^{\beta-s}}
\int_0^1\frac{{\sigma}^{(n-3)/2}}{\left[(1-t)^2+2t\sigma\right]^{n/2}}\;d\sigma\;dt,
\end{multline*}
where we have used only the expression with the third 
integral in \eqref{splitted}.
We claim that
\begin{equation}\label{45}
\int_0^1\frac{{\sigma}^{(n-3)/2}}{\left[(1-t)^2+2t\sigma\right]^{n/2}}
\;d\sigma\geq\frac{1}{1-t}\geq\frac{1}{1-(1-\eps)t},
\end{equation}
where the inequality is intended to hold up to constants.
In case \eqref{45} holds and $\beta\neq s$ we have
\begin{multline*}
\int_0^{t_1(\eps)}\frac{t^{n-1}\,\eps^s}{\left(1-(1-\eps)t\right)^{\beta-s}}
\int_0^1\frac{{\sigma}^{(n-3)/2}}{\left[(1-t)^2+2t\sigma\right]^{n/2}}\;d\sigma\;dt
\geq \int_0^{t_1(\eps)}\frac{t^{n-1}\,\eps^s}{\left(1-(1-\eps)t\right)^{\beta-s+1}}\;dt=\\
=\left.\frac{1}{(1-\eps)\,(\beta-s)}\cdot
\frac{t^{n-1}\,\eps^s}{\left(1-(1-\eps)t\right)^{\beta-s}}\right\arrowvert_0^{t_1(\eps)}
-\frac{n-1}{(1-\eps)(\beta-s)}\int_0^{t_1(\eps)}\frac{t^{n-2}\,\eps^s}{\left(1-(1-\eps)t\right)^{\beta-s}}\;dt.
\end{multline*}
The integral on the second line is a bounded quantity as $\eps\downarrow 0$
since $\beta<1+s$. Now, we are left with
$$
u(x)\geq\frac{\eps^{-\beta+2s}}{\beta-s}-\frac{\eps^s}{\beta-s}
$$
so that
$$
u(x)\geq\left\lbrace\begin{array}{ll}
\displaystyle\eps^s & \hbox{ when }0\leq\beta< s \\
\displaystyle\eps^s\log\frac{1}{\eps} & \hbox{ when }\beta =s \\
\displaystyle\eps^{-\beta+2s} & \hbox{ when }s<\beta<1+s.
\end{array}\right.
$$
We still have to prove \eqref{45}: note that an integration by parts yields, for $n\geq4$,
$$
\int_0^1\frac{{\sigma}^{(n-3)/2}}{\left[(1-t)^2+2t\sigma\right]^{n/2}}
\:d\sigma=
\frac{1}{2t\,(1-n/2)}\cdot\frac{1}{\left[(1-t)^2+2t\right]^{n/2-1}}
+\frac{1}{2t\,(n/2-1)}\int_0^1\frac{{\sigma}^{(n-5)/2}}{\left[(1-t)^2+2t\sigma\right]^{n/2-1}}\:d\sigma
$$
so that we can show \eqref{45} only in dimensions $n=2,\,3$
and deduce the same conclusions for any other value of $n$
by integrating by parts a suitable number of times.
For $n=2$
\begin{eqnarray*}
\int_0^1\frac{d\sigma}{\sqrt{\sigma}\left[(1-t)^2+2t\sigma\right]}
& = & 2\int_0^1\frac{d\xi}{(1-t)^2+2t\xi^2} \quad =\quad
\frac{2}{(1-t)^2}\int_0^1\frac{d\xi}{1+\frac{2t}{(1-t)^2}\,\xi^2}\\
& = & \frac{\pi}{\sqrt{2t}(1-t)}.
\end{eqnarray*}
For $n=3$
$$
\int_0^1\frac{d\sigma}{\left[(1-t)^2+2t\sigma\right]^{3/2}}=
\frac{1}{4t}\left(\frac{1}{1-t}-\sqrt{\frac{1}{1+t^2}}\right)
$$
which completely proves our claim \eqref{45}.

So far we have worked only on spherical domains.
In a general domain $\Omega$ with $C^{1,1}$ boundary,
split the problem in two by setting $u=u_1+u_2$,
for $0<\beta<1+s$ and a small $\delta_0>0$
$$
\left\lbrace\begin{array}{ll}
\displaystyle\Ds u_1(x)=\frac{\chi_{\{\delta\geq\delta_0\}}(x)}{{\delta(x)}^\beta} & \hbox{ in }\Omega, \\
u_1=0 & \hbox{ in }\C\Omega \\
Eu_1=0 & \hbox{ on }\partial\Omega
\end{array}\right.
\qquad\qquad
\left\lbrace\begin{array}{ll}
\displaystyle\Ds u_2(x)=\frac{\chi_{\{\delta<\delta_0\}}(x)}{{\delta(x)}^\beta} & \hbox{ in }\Omega, \\
u_2=0 & \hbox{ in }\C\Omega \\
Eu_2=0 & \hbox{ on }\partial\Omega
\end{array}\right.
$$
Note that $u_1\in C^s(\R^n)$, see \cite[Proposition 1.1]{rosserra}.
Since $\partial\Omega\in C^{1,1}$, we choose now $\delta_0$ sufficiently small in order to have
that for any $y\in\Omega$ with $\delta(y)<\delta_0$ it is uniquely determined $\theta=\theta(y)\in\partial\Omega$
such that $|y-\theta|=\delta(y)$. Then in inequalities (see \eqref{est-green})
\begin{multline*}
\int_{\{\delta(y)<\delta_0\}}\frac{\left[{|x-y|}^2\,\wedge\,\delta(x)\delta(y)\right]^s}{c_2\,|x-y|^n\,{\delta(y)}^\beta}\;dy
\leq u_2(x)=\int_{\{\delta(y)<\delta_0\}}\frac{G_\Omega(x,y)}{{\delta(y)}^\beta}\;dy\leq \\
\leq\int_{\{\delta(y)<\delta_0\}}\frac{c_2\,\left[{|x-y|}^2\,\wedge\,\delta(x)\delta(y)\right]^s}{|x-y|^n\,{\delta(y)}^\beta}\;dy
\end{multline*}
set
$$
\begin{array}{l}
\nu(\theta),\ \theta\in\partial\Omega,\ \hbox{the outward unit normal to }\partial\Omega\hbox{ at }\theta, \\
\eps:=\delta(x),\ \theta^*=\theta(x),\ \hbox{ and }x=\theta^*-\eps\,\nu(\theta^*), \\
\theta=\theta(y),\ r:=\delta(y),\ \hbox{ and }y=\theta-r\,\nu(\theta).
\end{array}
$$
Then, using the Fubini's Theorem, we write
\begin{equation}\label{09}
\int_{\{\delta(y)<\delta_0\}}\frac{\left[{|x-y|}^2\,\wedge\,\delta(x)\delta(y)\right]^s}{|x-y|^n\,{\delta(y)}^\beta}\;dy
=\int_0^{\delta_0}\int_{\partial\Omega}\frac{\left[{|\theta^*-\eps\,\nu(\theta^*)-\theta+r\,\nu(\theta)|}^2\,\wedge\,\eps r\right]^s}
{{|\theta^*-\eps\,\nu(\theta^*)-\theta+r\,\nu(\theta)|}^n\,{r}^\beta}\;d\mathcal{H}(\theta)\;dr.
\end{equation}
Split the integration on $\partial\Omega$ into the integration on 
$\Gamma:=\{\theta\in\partial\Omega:|\theta-\theta^*|<\delta_1\}$ and $\partial\Omega\setminus\Gamma$,
and choose $\delta_1>0$ small enough to have a $C^{1,1}$ diffeomorphism 
$$
\begin{array}{rcl}
\varphi:\widetilde\Gamma\subseteq S^{n-1} & \longrightarrow & \Gamma \\
\omega & \longmapsto & \theta=\varphi(\omega).
\end{array}
$$
We build now
$$
\begin{array}{rcl}
\overline\varphi:\widetilde\Gamma\times(0,\delta_0)\subseteq B_1 & \longrightarrow & \{y\in\Omega:\delta(y)<\delta_0,\ |\theta(y)-\theta^*|<\delta_1\} \\
(1-\delta)\omega=\omega-\delta\omega & \longmapsto & y=\theta-\delta\,\nu(\theta)=\varphi(\omega)-\delta\,\nu(\varphi(\omega)),
\end{array}
$$
where we suppose $e_1\in\widetilde\Gamma$ and $\phi(e_1)=\theta^*$.
With this change of variables \eqref{09} becomes
$$
\int_{\{\delta(y)<\delta_0\}}\frac{\left[{|x-y|}^2\,\wedge\,\delta(x)\delta(y)\right]^s}{|x-y|^n\,{\delta(y)}^\beta}\;dy
=\int_{0}^{\delta_0}\int_{\widetilde\Gamma}\frac{\left[{|e_1-\eps\,e_1-\omega+r\,\omega|}^2\,\wedge\,\eps r\right]^s}
{{|e_1-\eps\,e_1-\omega+r\,\omega|}^n\,{r}^\beta}\;|D\overline\varphi(\omega)|\;d\mathcal{H}(\omega)\;dr
$$
which, since $|D\overline\varphi(\omega)|$ is a bounded continuous quantity far from $0$,
is bounded below and above in terms of
$$
\int_{0}^{\delta_0}\int_{\widetilde\Gamma}\frac{\left[{|(1-\eps)e_1-(1-r)\omega|}^2\,\wedge\,\eps r\right]^s}
{{|(1-\eps)e_1-(1-r)\omega|}^n\,{r}^\beta}\;d\mathcal{H}(\omega)\;dr,
$$ 
i.e. we are brought back to the spherical case.

\subsubsection{Boundary continuity of $s$-harmonic functions}\label{cont-sharm-sec}

Consider $g:\C\Omega\rightarrow\R$ and
$$
u(x)=-\int_{\C\Omega}g(y)\Ds G_\Omega(x,y)\;dy
$$
and think of letting $x\rightarrow\theta\in\partial\Omega$.
Suppose that for any small $\eps>0$ there exists $\delta>0$
such that $|g(y)-g(\theta)|<\eps$ for any $y\in\C\Omega\cap B_\delta(\theta)$.
Then
\begin{multline*}
|u(x)-g(\theta)|=\left|\int_{\C\Omega}(g(\theta)-g(y))\Ds G_\Omega(x,y)\;dy\right|\leq\\
\leq \int_{\C\Omega\cap B_\delta(\theta)}\left|(g(\theta)-g(y))\Ds G_\Omega(x,y)\right|\;dy+
\int_{\C\Omega\setminus B_\delta(\theta)}\left|(g(\theta)-g(y))\Ds G_\Omega(x,y)\right|\;dy.
\end{multline*}
The first addend satisfies
$$
\int_{\C\Omega\cap B_\delta(\theta)}\left|(g(\theta)-g(y))\Ds G_\Omega(x,y)\right|\;dy
\leq -\eps \int_{\C\Omega\cap B_\delta(\theta)}\Ds G_\Omega(x,y)\;dy\leq\eps.
$$
For the second one we exploit \eqref{est-poisson}:
$$
\int_{\C\Omega\setminus B_\delta(\theta)}\left|(g(\theta)-g(y))\Ds G_\Omega(x,y)\right|\;dy\leq
{\delta(x)}^s\int_{\C\Omega\setminus B_\delta(\theta)}\frac{c_1|(g(\theta)-g(y))|}{{\delta(y)}^s\left(1+\delta(y)\right)^s{|x-y|}^n}
$$
which converges to $0$ as $x\rightarrow\theta\in\partial\Omega$.
So we have that
$$
\lim_{x\rightarrow\theta}|u(x)-g(\theta)|\leq\eps,
$$
and by arbitrarily choosing $\eps$, we conclude 
$$
\lim_{x\rightarrow\theta}u(x)=g(\theta).
$$

\subsubsection{Explosion rate of large $s$-harmonic functions: proof of \eqref{uleqg}}
\label{expl-sharm-sec}

We study here the rate of divergence of 
$$
u(x)=-\int_{\C\Omega}g(y)\:\Ds G_\Omega(x,y)\;dy
\quad\hbox{ as }x\rightarrow\partial\Omega,\ x\in\Omega
$$
which is the solution to 
$$
\left\lbrace\begin{array}{ll}
\Ds u=0 & \hbox{ in }\Omega \\
u=g & \hbox{ in }\C\Omega \\
Eu=0 & \hbox{ on }\partial\Omega
\end{array}\right.
$$
in case $g$ explodes at $\partial\Omega$.

\begin{rmk}\rm The asymptotic behaviour of $u$ depends only on
the values of $g$ near the boundary, since we can split
$$
g=g\chi_{\{d<\eta\}}+g\chi_{\{d\geq\eta\}}
$$
and the second addend has a null contribution on the boundary,
in view of Paragraph \ref{cont-sharm-sec}.
Therefore in our computations we will suppose
that $g(y)=0$ for $\delta(y)>\eta$.
\end{rmk}

In the further assumption that $g$ explodes like a power, i.e.
there exist $\eta, k, K>0$ for which
$$
\frac{k}{\delta(y)^\tau}\leq g(y)\leq\frac{K}{\delta(y)^\sigma},
\qquad \hbox{ for } 0\leq\tau\leq\sigma<1-s,\ 0<\delta(y)<\eta.
$$
(the choice $\sigma<1-s$ is in order to have \eqref{g},
see \eqref{est-poisson} above)
our proof doesn't require heavy computations
and it is as follows.

Dropping multiplicative constants in inequalities and for $\Omega_\eta=\{y\in\C\Omega:\delta(y)<\eta\}$:
\begin{multline*}
\delta(x)^\sigma u(x)=
-\delta(x)^\sigma\int_{\C\Omega}g(y)\cdot\Ds G_\Omega(x,y)\;dy \\
\leq\int_{\Omega_\eta}\frac{\delta(x)^{s+\sigma}}{\delta(y)^{s+\sigma}\,\left(1+\delta(y)\right)^s\,|x-y|^n}\;dy 
\leq-\int_{\C\Omega}\chi_{\Omega_\eta}(y)\cdot{(-\Delta)}^{s+\sigma}G_\Omega^{s+\sigma}(x,y)\;dy 
\ \leq\ 1
\end{multline*}
Similarly one can treat also the lower bound:
\begin{multline*}
\delta(x)^\tau u(x)=
-\delta(x)^\tau\int_{\C\Omega}g(y)\cdot\Ds G_\Omega(x,y)\;dy \\
\geq\int_{\Omega_\eta}\frac{\delta(x)^{s+\tau}}{\delta(y)^{s+\tau}\,\left(1+\delta(y)\right)^s\,|x-y|^n}\;dy 
\geq-\int_{\C\Omega}\chi_{\Omega_\eta}(y)\cdot{(-\Delta)}^{s+\tau}G_\Omega^{s+\tau}(x,y)\;dy 
\ \xrightarrow[x\rightarrow\partial\Omega]{}1.
\end{multline*}
The limit we have computed above is the continuity up to the boundary
of $\widehat{u}$ solution of
$$
\left\lbrace\begin{array}{ll}
{(-\Delta)}^{s+\tau}\widehat{u}=0 & \hbox{ in }\Omega, \\
\widehat{u}=\chi_{\Omega_\eta} & \hbox{ in }\C\Omega, \\
E\widehat{u}=0 & \hbox{ on }\partial\Omega.
\end{array}\right.
$$

\begin{rmk}\rm Both the upper and the lower estimate are optimal,
thanks to what have been shown in Example \ref{expl-sol}.
\end{rmk}

In the case of a general boundary datum $g$
we start from the case $\Omega=B$,
recalling that in this setting, according to \cite[equation (1.6.11')]{landkof},
$$
-\Ds G_B(x,y)=\frac{c(n,s)}{{|x-y|}^n}\left(\frac{1-|x|^2}{|y|^2-1}\right)^s
$$
and therefore
$$
u(x)=\int_{\C B}\frac{c(n,s)}{{|x-y|}^n}\left(\frac{1-|x|^2}{|y|^2-1}\right)^s\:g(y)\;dy.
$$
Suppose without loss of generality
$$
\begin{array}{l}
x=(1-\eps)\,e_1\quad \eps=\delta(x) \\
y=(1+r)\,\theta\quad r=\delta(y),\quad\theta=\frac{y}{|y|},\quad\theta_1=e_1\cdot\theta
\end{array}
$$
so that
$$
u(x)=\int_0^{\eta}\left[
\int_{\partial B}
\frac{c(n,s)}{{|(1-\eps)^2+(1+r)^2-2(1-\eps)(1+r)\theta_1|}^{n/2}}
\left(\frac{\eps(2-\eps)}{r(2+r)}\right)^s\:g(r\theta)
\;d\mathcal{H}(\theta)
\right](1+r)^{n-1}\;dr.
$$
Denote now by $\overleftarrow{g}(r)=\sup_{\delta(x)=r}g(x)$.
Splitting the integral in the $\theta$ variable into two integrals
in the variables $(\theta_1,\theta')$ where $\theta_1^2+|\theta'|^2=|\theta|^2=1$,
up to constants we obtain
\begin{eqnarray}
u(x) & \leq & 
\int_0^{\eta}\left[
\int_{-1}^1
\frac{(1-\theta_1^2)^{(n-3)/2}}{{|(1-\eps)^2+(1+r)^2-2(1-\eps)(1+r)\theta_1|}^{n/2}}
\cdot\frac{\eps^s}{r^s}
\;d\theta_1\right]\overline{g}(r)\,(1+r)^{n-1}\;dr \nonumber \\
& \leq & 
\eps^s\int_0^{\eta}\left[
\int_{-1}^1
\frac{(1-\theta_1^2)^{(n-3)/2}}{{|(1-\eps)^2+(1+r)^2-2(1-\eps)(1+r)\theta_1|}^{n/2}}
\;d\theta_1\right]\frac{\overline{g}(r)}{r^s}\;dr. \label{34}
\end{eqnarray}
Define $M:=\frac{1+r}{1-\eps}>1$ and look at the inner integral:
$$
\int_{-1}^1
\frac{(1-\theta_1^2)^{(n-3)/2}}{{|1+M^2-2M\theta_1|}^{n/2}}
\;d\theta_1
=
\int_{-1}^1
\frac{(1-\theta_1^2)^{(n-3)/2}}{{|1-\theta_1^2+(M-\theta_1)^2|}^{n/2}}
\;d\theta_1
\leq
\int_{-1}^1
\left|1-\theta_1^2+(M-\theta_1)^2\right|^{-3/2}
\;d\theta_1.
$$
The integral from $-1$ to $0$ contributes by a bounded quantity
so that we are left with
\begin{eqnarray*}{rcl}
u(x) & \leq & 
\int_0^1
\left|1-\theta_1+(M-\theta_1)^2\right|^{-3/2}
\;d\theta_1
\ =\ \int_0^1
\left|\tau+(M-1+\tau)^2\right|^{-3/2}
\;d\tau \\
& \leq & 
\int_0^1
\left|\tau+(M-1)^2\right|^{-3/2}
\;d\tau
\ =\ 
-2\left.\left(\tau+(M-1)^2\right)^{-1/2}\right|_{\tau=0}^1 
\ \leq \ 
\frac{1}{M-1}\ =\ \frac{1-\eps}{r+\eps}.
\end{eqnarray*}
Thus
$$
u(x)\ \leq\ \eps^s\int_0^{\eta}
\frac{\overline{g}(r)}{r^s}\cdot\frac{1-\eps}{r+\eps}\;dr
\ \leq\ \eps^s\int_0^{\eps}
\frac{\overline{g}(r)}{r^s}\cdot\frac{1}{r+\eps}\;dr+
\int_1^{\eta/\eps}\frac{\overline{g}(\eps\tau)}{\tau^s}\cdot\frac{1}{1+\tau}\;d\tau.
$$
Our claim  now is that this last expression
is controlled by $\overline{g}(\eps)$ as $\eps\downarrow0$.
Since $\overline{g}$ is exploding in $0$, for small $\eps$ it is
$\overline{g}(\tau\eps)\leq\overline{g}(\eps)$ for $\tau>1$ and
$$
\frac{1}{\overline{g}(\eps)}\int_1^{\eta/\eps}\frac{\overline{g}(\eps\tau)}{\tau^s}\cdot\frac{1}{1+\tau}\;d\tau
\leq \int_1^{+\infty}\frac{1}{\tau^s}\cdot\frac{1}{1+\tau}\;d\tau.
$$
For the other integral
$$
\frac{\eps^s}{\overline{g}(\eps)}\int_0^{\eps}
\frac{\overline{g}(r)}{r^s}\cdot\frac{1}{r+\eps}\;dr\leq
\frac{\eps^s}{\eps\,\overline{g}(\eps)}\int_0^{\eps}
\frac{\overline{g}(r)}{r^s}\;dr.
$$
To compute the limit as $\eps\downarrow 0$
we use a Taylor expansion:
$$
\frac{1}{\eps}\int_0^\eps G(r)\;dr
=G(\eps)+G'(\eps)\,\frac{\eps^2}{2}+o(\eps^2),
$$
where we have denoted by $G(\eps)=\eps^{-s}\,\overline{g}(s)$.
Thus
$$
\frac{\eps^s}{\overline{g}(\eps)}\int_0^{\eps}
\frac{\overline{g}(r)}{r^s}\cdot\frac{1}{r+\eps}\;dr\leq
1+\frac{G'(\eps)}{2\,G(\eps)}\,\eps+o\left(\frac{\eps}{G(\eps)}\right).
$$
We are going to show now that
$$
\frac{|G'(\eps)|}{G(\eps)}\,\eps<1.
$$
Indeed
$$
-\int_\eps^\eta\frac{G'(\xi)}{G(\xi)}\;d\xi<\int_\eps^\eta\frac{d\xi}{\xi}
\qquad\Longleftrightarrow\qquad G(\eps)<G(\eta)\,\frac{\eta}{\eps}
$$
which is guaranteed by the fact that $G$ is integrable in a neighbourhood of $0$.

These computations show that, in the case of the ball,
the explosion rate of the $s$-harmonic function induced 
by a large boundary datum is the almost the same as
the rate of the datum itself.

Note now that up to \eqref{34} the same computations provide a lower estimate for $u$
if we substitute $\overline{g}$ with $\underline{g}(r)=\inf_{\delta(x)=r}g(x)$.
Then
$$
\int_{-1}^1
\frac{(1-\theta_1^2)^{(n-3)/2}}{{|1+M^2-2M\theta_1|}^{n/2}}
\;d\theta_1
=
\int_{-1}^1
\frac{(1-\theta_1^2)^{(n-3)/2}}{{|1-\theta_1^2+(M-\theta_1)^2|}^{n/2}}
\;d\theta_1
\geq
\int_0^1\frac{\sigma^{(n-3)/2}}{\left|\sigma+(M-1+\sigma)^2\right|^{n/2}}\;d\sigma
\geq \frac{1}{M-1}
$$
where the last inequality is \eqref{45}.
Finally we need only to repeat the above computations replacing
$\overline{g}$ with $\underline{g}$ and other minor modifications.

In the case of a general smooth domain,
we can reduce to the spherical case 
as we did to conclude Paragraph \ref{rhs-blowing}.

\section{The semilinear fractional Dirichlet problem}\label{nonlin-sec}

\subsection{The method of sub- and supersolution: proof of Theorem \ref{nl-cs}}
The proof is a simple readaptation of the result
by Cl\'ement and Sweers \cite{clem-sweers}.

\it Existence. \rm
We can reduce the problem to homogeneous boundary condition,
indeed by considering the solution of 
$$
\left\lbrace\begin{array}{ll}
\Ds v=0 & \hbox{ in }\Omega \\
v=g & \hbox{ in }\C\Omega \\
Ev=0 & \hbox{ on }\partial\Omega
\end{array}\right.
$$
we can think of solving the problem
$$
\left\lbrace\begin{array}{ll}
\Ds u=-f(x,v+u) & \hbox{ in }\Omega \\
u=0 & \hbox{ in }\C\Omega \\
Eu=0 & \hbox{ on }\partial\Omega
\end{array}\right.
$$
therefore from now on we will suppose $g\equiv 0$.
Note also that since $v$ is continuous and bounded then
$(x,t)\mapsto f(x,v(x)+t)$ satisfies \it f.1) \rm too.

Modify $f$ by defining
$$
F(x,u)=\left\lbrace\begin{array}{ll}
f(x,\overline{u}) & \hbox{if }u>\overline{u}(x) \\
f(x,u) & \hbox{if }\underline{u}(x)\leq u\leq \overline{u}(x) \\
f(x,\underline{u}) & \hbox{if }u<\underline{u}(x)
\end{array}\right.\qquad\hbox{ for every }x\in\overline{\Omega},\, u\in\R:
$$
the function $F(x,u)$ is continuous and bounded on $\Omega\times\R$,
by hypothesis \it f.1)\rm. We can write a solution of 
$$
(\ast)\qquad\left\lbrace\begin{array}{ll}
\Ds u=-F(x,u) & \hbox{ in }\Omega \\
u=0 & \hbox{ in }\C\Omega \\
Eu=0 & \hbox{ on }\partial\Omega
\end{array}\right.
$$
as a fixed-point of the map obtained as the composition
$$
\begin{array}{ccccl}
L^\infty(\Omega) & 
\longrightarrow & L^\infty(\Omega) & 
\longrightarrow & L^\infty(\Omega) \\
u & \longmapsto & -F(x,u(x)) & \longmapsto & w \:\hbox{ s.t. }\Ds w=-F(x,u(x))\hbox{ in }\Omega,\ w=0\hbox{ in }\C\Omega,
\ Ew=0\hbox{ on }\partial\Omega.
\end{array}
$$
The first map sends $L^\infty(\Omega)$ 
in a bounded subset of $L^\infty(\Omega)$,
by continuity of $F$ and boundedness of $\underline{u},\,\overline{u}$.
The second map is compact since $w\in C^s(\R^n)$, thanks to
the results in \cite[Proposition 1.1]{rosserra}.
Then the composition admits a fixed point in view of the Shauder Fixed Point Theorem.

Note that a solution to the original problem lying between $\underline{u}$ and $\overline{u}$ in $\Omega$, 
is also a solution of $(\ast)$.
Moreover, \it any \rm solution of $(\ast)$ is between $\underline{u}$ and $\overline{u}$.
Indeed consider $A:=\{x\in\Omega:u(x)>\overline{u}(x)\}$, which is open by the continuity of $u$ and $\overline{u}$.
For any $\psi\in C^\infty_c(A)$, $\psi\geq 0$,
with the corresponding $\phi\in\T(A)$:
$$
\int_A \overline{u}(x)\,\psi(x)\;dx \geq -\int_A F(x,\overline{u}(x))\,\phi(x)\;dx\ =
\ -\int_A F(x,u(x))\,\phi(x)\;dx=\int_A u(x)\,\psi(x)\;dx
$$
which implies $u\leq\overline{u}$ in $A$,
by positivity of $\psi$, proving $A=\varnothing$.

\it Uniqueness. \rm If we have two continuous solutions $u$ and $w$
$$
\left\lbrace\begin{array}{ll}
\Ds u=-f(x,u) & \hbox{ in }\Omega \\
u=g & \hbox{ in }\C\Omega \\
Eu=0 & \hbox{ in }\partial\Omega
\end{array}\right.
\qquad\qquad
\left\lbrace\begin{array}{ll}
\Ds w=-f(x,w) & \hbox{ in }\Omega \\
w=g & \hbox{ in }\C\Omega \\
Ew=0 & \hbox{ in }\partial\Omega
\end{array}\right.
$$
then for the difference $u-w$ it is 
$$
\left\lbrace\begin{array}{ll}
\Ds(u-w)=-f(x,u)+f(x,w) & \hbox{ in }\Omega \\
u-w=0 & \hbox{ in }\C\Omega \\
Eu-Ew=0 & \hbox{ in }\partial\Omega.
\end{array}\right.
$$
Defining $\Omega_1=\{x\in\Omega:w(x)<u(x)\}$, thanks to the monotony of $f$,
$$
\left\lbrace\begin{array}{ll}
\Ds(u-w)\leq 0 & \hbox{ in }\Omega_1 \\
u-w\leq 0 & \hbox{ in }\C\Omega_1 \\
Eu-Ew=0 & \hbox{ in }\partial\Omega_1
\end{array}\right.
$$
but then, according to Lemma \ref{max-princweak2}, $u\leq w$ in $\Omega_1$.
This means $\Omega_1$ is empty. By reversing the roles of $u$ and $w$,
we deduce $u=w$ in $\Omega$.

\it Minimal solution. \rm
We refer the reader to the proof in \cite[Corollary 2.2]{KO-dupaigne}.

\subsection{Proof of Theorem \ref{large-building}}

In the case of negative right-hand side,
Theorem \ref{large-building} follows from Theorem \ref{-sign}.
So, assume the right-hand side is positive and consider 
$$
\left\lbrace\begin{array}{l}
\Ds v= f(x,v) \quad \hbox{ in }\Omega, \\ \displaystyle
\lim_{\stackrel{\hbox{\scriptsize $x\!\rightarrow\!\partial\Omega$}}{x\in\Omega}}v(x)=+\infty.
\end{array}\right.
$$
We look for a suitable shape $g$ of $v$ outside $\Omega$
and exploding at $\partial\Omega$:
the large $s$-harmonic function $v_0$ induced by $g$
in $\Omega$ will be a subsolution of our equation,
and in particular will imply that the blow-up condition 
at $\partial\Omega$ is fulfilled.
Then, in order to prove the existence part,
we need a supersolution.

Consider $F:\R\rightarrow\R$ continuous, increasing
and such that $F(t)\geq f(x,t)$ for any $t\geq 0$:
for example,
$$
F(t)={t}^{\frac{4s}{1-s}}+\max_{0\leq r\leq t}\left[\max_{x\in\Omega}f(x,r)\right].
$$
Choose
$$
g(x):=\frac{F^{-1}(I(x))-1}{\overline{c}},
$$
where $\overline{c}=\overline{c}(n,s,\Omega)$ is the constant of equation
\eqref{uleqg} giving the upper control
of large $s$-harmonic functions in terms of the boundary datum
(see Paragraph \ref{expl-sharm-sec}) and
$$
I(x)=\min\left\lbrace\A(n,s)\int_{\C\Omega}\frac{dy}{{|z-y|}^{n+2s}}:z\in\Omega,\ 
\delta(z)=\delta(x)\right\rbrace,\qquad x\in\R^n,\ \delta(x)\leq\max_{x'\in\Omega}\delta(x')
$$
and $I(x)=0$ when $\displaystyle x\in\C\Omega,\ \delta(x)>\max_{x'\in\Omega}\delta(x')$.
Note that when $\delta(x)$ is small
$$
I(x)\leq \A(n,s)\,\omega_{n-1}\int_{\delta(x)}^{+\infty}\frac{d\rho}{{\rho}^{1+2s}}=\frac{\A(n,s)\,\omega_{n-1}}{2s}\cdot\frac{1}{{\delta(x)}^{2s}}.
$$
Such $g$ satisfies hypothesis \eqref{g}, since when $\delta(x)$ is small
$$
F(t)\geq {t}^{\frac{4s}{1-s}}\quad\Longrightarrow\quad
F^{-1}(t)\leq {t}^{\frac{1-s}{4s}}\quad\Longrightarrow\quad
F^{-1}\left(I(x)\right)\leq\left(\frac{\A(n,s)\,\omega_{n-1}}{2s}\right)^{\frac{1-s}{4s}}\frac{1}{{\delta(x)}^{(1-s)/2}}.
$$

Call $v_0$ the solution to
$$
\left\lbrace\begin{array}{ll}
\Ds v_0 = 0 & \hbox{ in }\Omega, \\
v_0=g & \hbox{ in }\C\Omega, \\
Ev_0=0 & \hbox{ on }\partial\Omega.
\end{array}\right.
$$
Denote by $w:=v-v_0$: our claim
is that problem
$$
\left\lbrace\begin{array}{ll}
\Ds w=f(x,v_0+w) & \hbox{ in }\Omega \\
w=0 & \hbox{ in }\C\Omega \\
Ew=0 & \hbox{ on }\partial\Omega
\end{array}\right.
$$
admits a solution $w$.
Indeed, we have a subsolution which is
the function constant to $0$ in $\Omega$
and $\chi_\Omega$ turns out to be a supersolution.
To show this we consider the problem
$$
\left\lbrace\begin{array}{ll}
\Ds w\geq F(v_0+w) & \hbox{ in }\Omega \\
w=0 & \hbox{ in }\C\Omega \\
Ew=0 & \hbox{ on }\partial\Omega
\end{array}\right.
$$
and observe that, when $x\in\Omega$
$$
\Ds\chi_\Omega(x)=\A(n,s)\int_{\C\Omega}\frac{dy}{{|x-y|}^{n+2s}}\geq I(x)
$$
and
$$
F(v_0(x)+1)\leq F(\overline{c}g(x)+1)=F(F^{-1}(I(x)))=I(x).
$$
Finally, the property $F(v_0+w)\geq f(x,v_0+w)$
concludes the construction of the supersolution.
Then Lemma \ref{sub+super} below provides 
the existence of a solution.

\subsection{Damping term: proof of Theorem \ref{-sign}}

For any $N\in\N$, denote by $g_N=\min\{g,N\}$. Also, with the notation of equation \eqref{h-approx},
for a small parameter $r>0$ denote by
$$
f_r(y)=f_r(\rho,\theta)=h(\theta)\,\frac{\varphi(\rho/r)}{K_r},
\qquad K_r=\frac{1}{1+s}\int_{\Omega_r}\varphi(\delta(y)/r)\,{\delta(y)}^s\;dy,
$$
and recall that this is an approximation of the $h$ boundary datum.
Finally call $u_{N,r}$ the minimal solution of
$$
\left\lbrace\begin{array}{ll}
\Ds u_{N,r}(x)=-f(x,u_{N,r}(x))+f_r(x) & \hbox{ in }\Omega \\
u_{N,r}=g_N & \hbox{ in }\C\Omega \\
Eu_{N,r}=0 & \hbox{ on }\partial\Omega
\end{array}\right.
$$
provided by Theorem \ref{nl-cs}.
Note that for any $r>0$, 
the sequence $\{u_{N,r}\}_{N\in\N}$ we obtain is increasing in $N$:
indeed, $u_{N+1,r}$ is a supersolution for the problem defining $u_{N,r}$,
since it has larger boundary values and the minimality property
on $u_{N,r}$ gives $u_{N,r}\leq u_{N+1,r}$.
Moreover, $\{u_{N,r}\}_{N\in\N}$ is bounded by the 
function $u_r^0$ associated with 
the linear problem with data $g$ and $f_r$, i.e.
$$
\left\lbrace\begin{array}{ll}
\Ds u_r^0=f_r & \hbox{ in }\Omega, \\
u_r^0=g & \hbox{ in }\C\Omega, \\
Eu_r^0=0 & \hbox{ on }\partial\Omega.
\end{array}\right.
$$
Therefore $u_{N,r}$ admits a pointwise limit in $\R^n$. Call $u_r$ this limit: 
obviously $u_r=g$ in $\C\Omega$. Take any nonnegative $\phi\in\T(\Omega)$
with $0\leq\psi=\Ds\phi|_\Omega\in C^\infty_c(\Omega)$: then
\begin{multline*}
\int_\Omega[f(x,u_r)-f_r]\,\phi\leq\liminf_{N\uparrow+\infty}\int_\Omega[f(x,u_{N,r})-f_r]\,\phi
=-\limsup_{N\uparrow+\infty}\int_\Omega u_{N,r}\psi-\int_{\C\Omega}g\,\Ds\phi=\\
=-\int_\Omega u_r\psi-\int_{\C\Omega}g\,\Ds\phi,
\end{multline*}
where we have used the Fatou lemma and the continuity 
of the map $t\mapsto f(x,t)$.
This means that $u_r$ is a subsolution.
We are left to prove that $u_r$ is also a supersolution.
Call $\Omega'=$ supp$\psi\subset\subset\Omega$ 
and build a sequence ${\{\Omega_k\}}_{k\in\N}$
such that $\Omega'\subseteq\Omega_k\subseteq\Omega$ and $\Omega_k\nearrow\Omega$.
Since $\psi\in C^\infty_c(\Omega_k)$ for any $k$, then the we can build 
the sequence of functions $\phi_k\in\T(\Omega_k)$ induced by $\psi$:
this sequence is increasing and converges pointwisely to $\phi$.
Moreover, for any $k$, since $\Ds\phi_k\leq0$ in $\C\Omega_k$
\begin{multline*}
\int_\Omega[f(x,u_r)-f_r]\,\phi_k=\lim_{N\uparrow+\infty}\int_{\Omega_k}[f(x,u_{N,r})-f_r]\,\phi_k
=\lim_{N\uparrow+\infty}\left(-\int_{\Omega_k}u_{N,r}\psi-\int_{\C\Omega_k}u_{N,r}\,\Ds\phi_k\right)\geq\\
\geq\lim_{N\uparrow+\infty}\left(-\int_{\Omega}u_{N,r}\psi-\int_{\C\Omega}g_N\,\Ds\phi_k\right)
=-\int_\Omega u_r\psi-\int_{\C\Omega}g\,\Ds\phi_k:
\end{multline*}
letting both sides of the inequality pass to the limit as $k\uparrow+\infty$
we obtain
$$
\int_\Omega[-f(x,u_r)-f_r]\,\phi\geq-\int_\Omega u_r\psi-\int_{\C\Omega}g\,\Ds\phi,
$$
because recall that for $x\in\C\Omega\subseteq\C\Omega_k$
$$
-\Ds\phi_k(x)=\int_{\Omega_k}\frac{\phi_k(y)}{{|x-y|}^{n+2s}}\;dy
$$
increases to $-\Ds\phi(x)$.

This means that $u_r$ is both a sub- and a supersolution
and it solves
\begin{equation}\label{ur}
\left\lbrace\begin{array}{ll}
\Ds u_r(x)=-f(x,u_r(x))+f_r(x) & \hbox{ in }\Omega, \\
u=g & \hbox{ in }\C\Omega, \\
Eu=0 & \hbox{ on }\partial\Omega.
\end{array}\right.
\end{equation}

\begin{rmk}\rm Note that we have just solved all problems 
with null $h$ boundary datum, i.e.
$$
\left\lbrace\begin{array}{ll}
\Ds u(x)=-f(x,u(x)) & \hbox{ in }\Omega \\
u=g & \hbox{ in }\C\Omega \\
Eu=0 & \hbox{ on }\partial\Omega
\end{array}\right.
$$
since, in this case, it is obviously $f_r\equiv 0$.
\end{rmk}

We want now to push $r\downarrow0$, under the additional assumption
$$
f(x,t)\leq a_1+a_2{t}^p,\quad \hbox{ for }t>0,\ p<\frac{1+s}{1-s}.
$$ 
We claim that the family ${\{u_r\}}_r$ is uniformly bounded and
equicontinuous on every compact subset of $\Omega$:
there exist then a $u\in L^1(\Omega)$ (since $|u_r(x)|\leq C\delta(x)^{s-1}$ for a $C$ independent on $r$) 
and a sequence ${\{r_k\}}_{k\in\N}$
such that $r_k\rightarrow 0$ as $k\rightarrow+\infty$ and
$u_{r_k}\rightarrow u$ a.e. in $\Omega$ and uniformly
on compact subsets. Then for any $\phi\in\T(\Omega)$
$$
\begin{array}{ccccc}
\displaystyle
\int_\Omega u_{r_k}\Ds\phi & = & \displaystyle
-\int_\Omega f(x,u_{r_k})\,\phi &
\displaystyle +\int_\Omega f_r\,\phi &
\displaystyle -\int_{\C\Omega}g\,\Ds\phi \\
\downarrow & & \downarrow & \downarrow & \\
\displaystyle
\int_\Omega u\Ds\phi & = & \displaystyle
-\int_\Omega f(x,u)\,\phi &
\displaystyle +\int_{\partial\Omega} h\,D_s\phi &
\displaystyle -\int_{\C\Omega}g\,\Ds\phi
\end{array}
$$
where the convergence $\int_\Omega f(x,u_{r_k})\phi\rightarrow\int_\Omega f(x,u)\phi$
holds since
$$
f(x,u_{r_k})|\phi|\leq\left(a_1+a_2 u_{r_k}^p\right)\,C{\delta(x)}^s
\leq\left(a_1+a_2 u_0^p\right)\,C{\delta}^s\leq
\left(a_1+a_2 {\delta}^{p(s-1)}\right)\,C{\delta}^s
$$
and $p(s-1)+s>-1$ by hypothesis.
We still have to prove that our claim is true.
The uniform boundedness on compact subsets of ${\{u_r\}}_r$ is a
consequence of inequalities
$$
0\leq u_r \leq v_r \xrightarrow{r\downarrow 0} u_0,\quad\hbox{ for any }r,
$$
where
$$
\left\lbrace\begin{array}{ll}
\Ds v_r=f_r & \hbox{ in }\Omega \\
v_r=g & \hbox{ in }\C\Omega \\
Ev_r=0 & \hbox{ on }\partial\Omega,
\end{array}\right.
\qquad\qquad
\left\lbrace\begin{array}{ll}
\Ds u_0=0 & \hbox{ in }\Omega \\
u_0=g & \hbox{ in }\C\Omega \\
Eu_0=h & \hbox{ on }\partial\Omega,
\end{array}\right.
$$
and the convergence of $v_r$ to $u_0$ is uniform
in compact subsets of $\Omega$.
Then
\begin{multline*}
|u_r(x)-u_r(z)|  = 
\left|\int_\Omega\left(G_\Omega(x,y)-G_\Omega(z,y)\right)\,[-f(y,u_r(y))+f_r(y)]\;dy\right| \leq \\
\leq \int_\Omega\left|G_\Omega(x,y)-G_\Omega(z,y)\right|\left(a_1+C\,a_2{\delta(y)}^{p(s-1)}\right)\;dy 
+\int_\Omega\left|G_\Omega(x,y)-G_\Omega(z,y)\right|\,C\,{\delta(y)}^{-1}\;dy \\
\end{multline*}
implies the equicontinuity.
\bigskip

Note that this proof exploits the negativity of the right-hand side 
only in considering the $s$-harmonic function induced by
$g$ and $h$ as a supersolution of problem 
$$
\left\lbrace\begin{array}{ll}
-\Ds u=f(x,u) & \hbox{ in }\Omega \\
u=g & \hbox{ in }\C\Omega \\
Eu=h & \hbox{ in }\partial\Omega.
\end{array}\right.
$$
With minor modifications to the proof we can state

\begin{lem}\label{sub+super}
Let $f:\overline{\Omega}\times\R\rightarrow[0,+\infty)$ be a function 
satisfying f.1) and f.2).
Let $g:\C\Omega\rightarrow\R^+$ be a measurable function
satisfying \eqref{gintro} and $h\in C(\partial\Omega),\ h\geq0$.
Assume the nonlinear problem
$$
\left\lbrace\begin{array}{ll}
\Ds u=f(x,u) & \hbox{ in }\Omega \\
u=g & \hbox{ in }\C\Omega \\
Eu=h & \hbox{ on }\partial\Omega
\end{array}\right.
$$
admits a subsolution $\underline{u}\in L^1(\Omega)$ and a supersolution $\overline{u}\in L^1(\Omega)$.
Assume also $\underline{u}\leq\overline{u}$ in $\Omega$.
Then the above nonlinear problem has a weak solution $u\in L^1(\Omega)$ 
satisfying
$$
\underline{u}\leq u\leq\overline{u}.
$$
\end{lem}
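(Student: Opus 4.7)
The plan is to replay the proof of Theorem \ref{-sign} almost verbatim, with the following substitution made explicit. The only step of that proof that exploited the negative sign of the nonlinearity was the use of the $s$-harmonic function induced by $g$ and $h$ as a trivial supersolution; with the positive sign here, this trivial supersolution is no longer available, but the hypothesis furnishes the external supersolution $\overline{u}$. Likewise, the zero function, which played the role of the trivial subsolution in the damping case, is replaced by the given $\underline{u}$. Thus $\underline{u}$ and $\overline{u}$ become the $L^1$ barriers controlling the whole approximation scheme.

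First I would approximate the data exactly as in Theorem \ref{-sign}: set $g_N=\min\{g,N\}$ and, via the construction \eqref{h-approx}, let $f_r$ be the $L^\infty$ approximation of $h$ such that the corresponding linear problem attains the $E$-trace $h$ in the limit $r\downarrow 0$. For each $(N,r)$ I would build a minimal solution $u_{N,r}$ of
$$
\left\lbrace\begin{array}{ll}
\Ds u_{N,r} = f(x,u_{N,r})+f_r(x) & \hbox{ in } \Omega \\
u_{N,r} = g_N & \hbox{ in } \C\Omega \\
E u_{N,r} = 0 & \hbox{ on } \partial\Omega
\end{array}\right.
$$
by adapting the Schauder fixed-point/truncation argument of Theorem \ref{nl-cs}: cap the nonlinearity to
$$
F(x,t)=\left\lbrace\begin{array}{ll}
f(x,\overline{u}(x)) & t>\overline{u}(x) \\
f(x,t) & \underline{u}(x)\leq t\leq\overline{u}(x) \\
f(x,\underline{u}(x)) & t<\underline{u}(x)
\end{array}\right.
$$
solve the capped equation by Schauder, and then use Lemma \ref{max-princweak2} to verify that the fixed point lies pointwise between $\underline{u}$ and $\overline{u}$, so that $F(x,u_{N,r})=f(x,u_{N,r})$. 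Minimality together with the uniform upper bound $u_{N,r}\leq\overline{u}$ yields monotonicity in $N$, and the monotone limit $u_r=\lim_N u_{N,r}$ then solves the problem with data $(g,f_r)$; the verification that $u_r$ is simultaneously a sub- and a supersolution is identical to the corresponding step in Theorem \ref{-sign}, using Fatou on one side and the test-function exhaustion $\phi_k\in\T(\Omega_k)$ on $\Omega_k\nearrow\Omega$ on the other.

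Finally I would pass to the limit $r\downarrow 0$ using the uniform pointwise control $\underline{u}\leq u_r\leq\overline{u}$, which combined with a Green-function estimate of the differences $|u_r(x)-u_r(z)|$ as in the closing part of Theorem \ref{-sign} provides equicontinuity on compact subsets of $\Omega$; a subsequence then converges almost everywhere to the candidate solution $u$. Dominated convergence for the nonlinear term is justified by $0\leq f(x,u_r(x))\leq\max_{0\leq t\leq\overline{u}(x)}f(x,t)$, which is integrable against any $\phi\in\T(\Omega)$ since $|\phi|\leq C\delta^s$ and the supersolution inequality for $\overline{u}$ (tested against the torsion function $\zeta$ of Theorem \ref{reg-l1sol}) already encodes the bound $\int_\Omega f(x,\overline{u})\,\delta^s<+\infty$. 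I expect the main technical obstacle to lie in the construction step above: Theorem \ref{nl-cs} was stated under $\underline{u},\overline{u}\in L^\infty(\Omega)\cap C(\Omega)$, whereas here they are only in $L^1(\Omega)$, so the Schauder step requires a preliminary secondary truncation $\overline{u}^M=\overline{u}\wedge M$, $\underline{u}_M=\underline{u}\vee(-M)$ to render the capped nonlinearity $F^M$ bounded; one then applies Theorem \ref{nl-cs} to obtain $u_{N,r}^M$ with $\underline{u}_M\leq u_{N,r}^M\leq\overline{u}^M$, and takes $M\to\infty$ monotonically, the $L^1$ integrability of $\overline{u}$ and $\underline{u}$ providing the ultimate a priori control.
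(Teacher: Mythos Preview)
Your proposal is correct and follows essentially the same approach as the paper: the paper's own proof is the single line ``Replace, in the above proof, the function $u^0$ with the supersolution $\overline{u}$,'' which is exactly the substitution you describe. You have in fact been more careful than the paper, since you flag the discrepancy between the $L^\infty\cap C$ hypothesis on the barriers in Theorem~\ref{nl-cs} and the $L^1$ barriers assumed here, and you propose the natural secondary truncation $\overline{u}\wedge M$, $\underline{u}\vee(-M)$ to bridge it; the paper's one-line proof silently glosses over this point.
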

\begin{tfa} Replace, in the above proof, the function
$u^0$ with the supersolution $\overline{u}$.
\end{tfa}

\subsection{Sublinear nonnegative nonlinearity: proof of Theorem \ref{sublinear}}


We first prove a Lemma which will make the proof easily go through.

\begin{lem} There exists $m=m(\Lambda)>0$ sufficiently large for which 
any problem of the form
$$
\left\lbrace\begin{array}{llc}
\Ds u(x)=\Lambda (u(x)) & \hbox{ in }\Omega, &  \\
u=g & \hbox{ in }\C\Omega, & g\geq m>0, \\
Eu=h & \hbox{ on }\partial\Omega,
\end{array}\right.
$$
is solvable.
\end{lem}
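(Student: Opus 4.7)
The plan is to apply the sub-/supersolution method of Lemma \ref{sub+super} with the $s$-harmonic lift $u_0$ of the boundary data $g,h$ as subsolution---which works because $\Lambda \geq \Lambda(0) \geq 0$, so $\Ds u_0 = 0 \leq \Lambda(u_0)$---and with a supersolution obtained by linearizing $\Lambda$ at height $m$. Since $\Lambda'(t) \to 0$ as $t \to \infty$, I will fix $m$ large enough that $\Lambda'(m) < \lambda_1(\Omega)$, the first Dirichlet eigenvalue of $\Ds$ on $\Omega$, and define $\overline u$ as the solution of the linear problem
$$
\Ds \overline u = \Lambda(m) + \Lambda'(m)(\overline u - m) \quad \hbox{in }\Omega, \qquad \overline u = g \hbox{ in }\C\Omega, \qquad E\overline u = h,
$$
obtained by splitting $\overline u = u_0 + w$ and inverting the perturbed operator $\Ds - \Lambda'(m)$ on functions vanishing outside $\Omega$ with $E=0$. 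By concavity of $\Lambda$, the tangent line $L(t) = \Lambda(m) + \Lambda'(m)(t-m)$ dominates $\Lambda$ everywhere, whence $\Ds \overline u = L(\overline u) \geq \Lambda(\overline u)$, i.e. $\overline u$ is indeed a supersolution.

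To check the ordering $u_0 \leq \overline u$, which is precisely where the hypothesis $g \geq m$ enters, I will first establish $\overline u \geq m$ in $\Omega$. Setting $v = \overline u - m$ (viewing $m$ as a constant on $\R^n$, whose $E$-trace vanishes because of the factor $\delta^{1-s}$ in the definition of $E$), one finds $\Ds v - \Lambda'(m) v = \Lambda(m) \geq 0$ in $\Omega$, $v = g-m \geq 0$ in $\C\Omega$, and $Ev = h \geq 0$ on $\partial\Omega$; the maximum principle for $\Ds - \Lambda'(m)$ in the subcritical regime $\Lambda'(m) < \lambda_1$ then yields $v \geq 0$. Consequently, $\Ds(\overline u - u_0) = L(\overline u) \geq L(m) = \Lambda(m) \geq 0$ with zero exterior and $E$-data on $\partial\Omega$, so Lemma \ref{max-princweak2} forces $\overline u \geq u_0$. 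The conclusion follows from Lemma \ref{sub+super}.

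The main obstacle I anticipate is establishing the solvability of the linearized problem $\Ds w - \Lambda'(m) w = F$ with homogeneous exterior and $E$-data inside the $L^1$-framework of Definition \ref{weakdefiintro}, together with the associated weak maximum principle for the operator $\Ds - \Lambda'(m)$ when $\Lambda'(m) < \lambda_1$. Once this linear building block and its comparison principle are in hand, the remainder of the argument is essentially bookkeeping driven by the concavity of $\Lambda$.
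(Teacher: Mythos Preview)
Your approach is correct in outline but takes a genuinely different route from the paper. The paper's proof is a direct Banach fixed-point argument: writing the problem as the integral equation
\[
u(x)=u_0(x)+\int_\Omega G_\Omega(x,y)\,\Lambda(u(y))\;dy,
\]
one checks that the associated map $\mathcal{K}$ sends $D_m=\{w\in L^1(\Omega):w\geq m\}$ into itself (because $u_0\geq m$ and the Green term is nonnegative), and then the Lipschitz estimate $|\Lambda(u)-\Lambda(v)|\leq \Lambda'(m)|u-v|$ on $D_m$ gives
\[
\Arrowvert \mathcal{K}u-\mathcal{K}v\Arrowvert_{L^1}\leq \Arrowvert\zeta\Arrowvert_\infty\,\Lambda'(m)\,\Arrowvert u-v\Arrowvert_{L^1},
\qquad \zeta(y)=\int_\Omega G_\Omega(x,y)\;dx,
\]
which is a contraction once $m$ is chosen so that $\Arrowvert\zeta\Arrowvert_\infty\,\Lambda'(m)<1$. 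No spectral information, no perturbed linear theory, and no sub-/supersolution machinery are needed.

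Your argument instead linearizes at height $m$ and invokes solvability and the maximum principle for $\Ds-\Lambda'(m)$ in the regime $\Lambda'(m)<\lambda_1$. This is sound, but it imports two ingredients---existence for $(\Ds-\lambda)w=F$ with homogeneous data and the associated weak comparison principle---that the paper has not developed and that you yourself flag as the main obstacle. One minor point: Lemma~\ref{sub+super} as stated asks for \textit{f.2)}, in particular $f(x,0)=0$, which $\Lambda$ need not satisfy; this is harmless for the underlying argument but would require a small remark. In short, both proofs work, but the paper's contraction is shorter, self-contained within the Green-function framework already built, and replaces your spectral threshold $\lambda_1$ by the more elementary constant $\Arrowvert\zeta\Arrowvert_\infty^{-1}$.
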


\begin{tfa}\rm We can equivalently solve the integral equation
$$
u(x)\ =\ u_0(x)+\int_\Omega G_\Omega(x,y)\,\Lambda (u(y))\;dy,
$$
where $u_0$ is the s-harmonic function induced by $g$ and $h$ in $\Omega$.

Define the map
$$
\begin{array}{rcl}
\mathcal{K}\ :\ L^1(\Omega) & \longrightarrow & L^1(\Omega) \\
u(x) & \longmapsto & \displaystyle
u_0(x)+\int_\Omega G_\Omega(x,y)\,\Lambda (u(y))\;dy
\end{array}
$$
The condition $g\geq m$ in $\C\Omega$ entails $u_0\geq m$ in $\Omega$;
also, for any $w\in L^1(\Omega),\ w\geq 0$ implies $\mathcal{K} w\geq u_0\geq m$,
therefore $\mathcal{K}$ sends the subset $D_m:=\{w\in L^1(\Omega):w\geq m\}$ of $L^1(\Omega)$
into itself. Moreover, for $u,v\in D_m$
\begin{multline*}
\int_\Omega\left|\mathcal{K}u(x)-\mathcal{K}v(x)\right|\;dx
\leq
\int_\Omega\left|\Lambda (u(y))-\Lambda (v(y))\right|\int_\Omega G_\Omega(x,y)\;dx\;dy
\leq\\
\leq\Arrowvert\zeta\Arrowvert_\infty
\sup_{x\in\Omega}\Lambda' (u(x))\int_\Omega\left| u(y)-v(y)\right|\;dy
\leq\Arrowvert\zeta\Arrowvert_\infty
\Lambda'(m)\int_\Omega\left| u(y)-v(y)\right|\;dy
\end{multline*}
where $\zeta(y)=\int_\Omega G_\Omega(x,y)\;dx$.
Now, if $m$ is very large, we have 
$$
\Arrowvert\zeta\Arrowvert_\infty\Lambda'(m)<1,
$$
i.e. $\mathcal{K}$ is a contraction on $D_m$,
and $\mathcal{K}$ has a fixed point in $D_m$.
\end{tfa}

In general, for the problem
$$
\left\lbrace\begin{array}{ll}
\Ds u=f(x,u) & \hbox{ in }\Omega, \\
u=g & \hbox{ in }\C\Omega, \\
Eu=h & \hbox{ on }\partial\Omega,
\end{array}\right.
$$
we have a subsolution which is the $s$-harmonic function satisfying 
the boundary conditions.

But we are now able to provide a supersolution:
this can be done by setting $g_m=\max\{g,m\}$
and by solving, for some large value of $m$,
$$
\left\lbrace\begin{array}{ll}
\Ds\overline{u}=\Lambda(\overline{u})\geq f(x,\overline{u}) & \hbox{ in }\Omega,  \\
\overline{u}=g_m\geq g & \hbox{ in }\C\Omega,\\
E\overline{u}=h & \hbox{ on }\partial\Omega.
\end{array}\right.
$$

It is sufficient to apply the classical iteration scheme starting from
the $s$-harmonic function $u_0$ and with iteration step
$$
\hbox{for any }k\in\N\qquad
\left\lbrace\begin{array}{ll}
\Ds u_k=f(x,u_{k-1}(x)) & \hbox{ in }\Omega, \\
u_k=g & \hbox{ in }\C\Omega,\\
Eu_k=h & \hbox{ on }\partial\Omega
\end{array}\right.
$$
In such a way we build an increasing sequence ${\{u_k\}_{k\in\N}}\subseteq L^1(\Omega)$
which is uniformly bounded from above by $\overline{u}$.
Indeed, on the one hand we have that 
$$
\left\lbrace\begin{array}{ll}
\Ds (u_1-u_0)=f(x,u_0(x))\geq 0 & \hbox{ in }\Omega \\
u_1-u_0=0 & \hbox{ in }\C\Omega \\
Eu_1-Eu_0=0 & \hbox{ on }\partial\Omega
\end{array}\right.
$$
entails that $u_1-u_0\geq0$, while on the other hand
an induction argument relying on the monotony of $t\mapsto f(x,t)$
finally shows that $u_k$ increases.
Call $u(x):=\lim_{k}u_k(x)$, which is finite in view of 
the upper bound furnished by $\overline{u}$.
Then
$$
u(x)=\lim_{k\rightarrow+\infty}u_k(x)=u_0(x)+
\lim_{k\rightarrow+\infty}\int_\Omega f(y,u_{k-1}(y))\,G_\Omega(x,y)\; dy=
u_0(x)+\int_\Omega f(y,u(y))\,G_\Omega(x,y)\; dy.
$$

\subsection{Superlinear nonnegative nonlinearity: proof of Theorem \ref{superlinear}}

We give the proof for problem 
$$
\left\lbrace\begin{array}{ll}
\Ds u = f(x,u) & \hbox{ in }\Omega \\
u(x)=\delta(x)^{-\beta} & \hbox{ in }\C\Omega,\ 0<\beta<1+s \\
Eu=0 & \hbox{ on }\partial\Omega
\end{array}\right.
$$
while for the other one it is sufficient to replace $\beta$ with $1-s$
and repeat the same computations.

To treat the case of a general nonlinearity
we use again the equivalent integral equation
$$
u(x)\ =\ u_0(x)+\lambda\int_\Omega G_\Omega(x,y)\,f(y,u(y))\;dy,
$$
where $u_0$ is the s-harmonic function induced in $\Omega$ by the boundary data.
In this case the computations in Section \ref{bblin-sec} on the 
rate of explosion at the boundary turn out to be very useful.
Indeed on the one hand we have that $u_0$ inherits its explosion
from the boundary data $g$ and $h$: briefly, in our case
\begin{equation*}
g(x)= \frac{1}{{\delta(x)}^\beta},\quad 0<\beta<1-s
\qquad  \longrightarrow \qquad
\frac{\underline{c}}{{\delta(x)}^\beta}\leq u_0(x)\leq \frac{\overline{c}}{{\delta(x)}^\beta}.
\end{equation*}
Since $u_0$ is a subsolution,
our first goal is to build a supersolution and we build it of the form
$$
\overline{u}=u_0+\zeta,
$$
where
$$
\left\lbrace\begin{array}{ll}
\Ds\zeta={\delta(x)}^{-\gamma} & \hbox{ in }\Omega,\quad\gamma>0\\
\zeta=0 & \hbox{ in }\C\Omega, \\
E\zeta=0 & \hbox{ on }\partial\Omega.
\end{array}\right.
$$
Recall that, \eqref{udelta} says
$$
\zeta(x)\geq \left\lbrace\begin{array}{ll}\displaystyle
c_1{\delta(x)}^s & \hbox{ if }0\leq\gamma<s, \\ \displaystyle 
c_3{\delta(x)}^s\,\log\frac{1}{\delta(x)} & \hbox{ if }\gamma=s, \\ \displaystyle 
c_5{\delta(x)}^{-\gamma+2s} & \hbox{ if }s<\gamma<1+s.
\end{array}\right.
$$
The function $\overline{u}$ is a supersolution if
$$
\overline{u}(x)\geq u_0(x)+\lambda\int_\Omega G_\Omega(x,y)\,f(y,\overline{u}(y))\;dy,
$$
or, equivalently formulated
\begin{equation}\label{12}
\zeta(x)\geq\lambda\int_\Omega G_\Omega(x,y)\,f(y,u_0(y)+\zeta(y))\;dy.
\end{equation}

If $f(x,t)$ has an algebraic behavior\footnote{for $p<1$ we are actually in the case of the previous paragraph}
$$
f(x,t)\leq a_1+a_2\,{t}^p,\quad a_1,a_2>0,\ p\geq 1
$$
then
$$
f(y,u_0(y)+\zeta(y))\leq a_1+a_2\,\left(u_0(y)+\zeta(y)\right)^p\leq
\left\lbrace\begin{array}{ll}
\displaystyle\frac{C}{{\delta(x)}^{p\beta}} & \hbox{ if }\gamma-2s\leq\beta, \\
\displaystyle\frac{C}{{\delta(x)}^{p(\gamma-2s)}} & \hbox{ if }\gamma-2s>\beta.
\end{array}\right.
$$
In case $\gamma-2s\leq\beta$ we have
$$
\int_\Omega G_\Omega(x,y)\,f(y,u_0(y)+\zeta(y))\;dy\leq 
\left\lbrace\begin{array}{ll}
\displaystyle c_2\,C{\delta(x)}^s & \hbox{ if }p\beta<s \\
\displaystyle c_4\,C{\delta(x)}^s\log\frac{1}{\delta(x)} & \hbox{ if }p\beta= s \\
\displaystyle\frac{c_6\,C}{{\delta(x)}^{p\beta-2s}} & \hbox{ if }s< p\beta<1+s 
\end{array}\right.
$$
again in view of Paragraph \ref{rhs-blowing}, so that
we can choose $\gamma=p\beta$ provided $p\beta-2s\leq\beta$.

If this is not the case then it means we need powers $\gamma$
satisfying $\gamma-2s>\beta$. If $\gamma-2s>\beta$ we have
$$
\int_\Omega G_\Omega(x,y)\,f(y,u_0(y)+\zeta(y))\;dy\leq 
\left\lbrace\begin{array}{ll}
\displaystyle c_2\,C{\delta(x)}^s & \hbox{ if }p(\gamma-2s)<s \\
\displaystyle c_4\,C{\delta(x)}^s\log\frac{1}{\delta(x)} & \hbox{ if }p(\gamma-2s)= s \\
\displaystyle\frac{c_6\,C}{{\delta(x)}^{p(\gamma-2s)-2s}} & \hbox{ if }s< p(\gamma-2s)<1+s 
\end{array}\right.
$$
and a suitable choice for $\gamma$ would be 
$$
\gamma=\max\left\lbrace \frac{2s\,p}{p-1},\beta+2s+\eps\right\rbrace
$$
which fulfills both inequalities
$$
\gamma-2s>\beta,\qquad\gamma\geq p(\gamma-2s).
$$
This is an admissible choice for $\gamma$ provided $\gamma<1+s$, i.e. only if
$$
p>\frac{1+s}{1-s};
$$
in case $p$ doesn't satisfy this lower bound then
$$
p\leq\frac{1+s}{1-s} \quad \Longrightarrow \quad p\beta\leq \frac{1+s}{1-s}\,\beta\leq \beta+2s
$$
and we are in the previous case.

Finally, if $q\beta>1+s$ then a solution $u$ should satisfy, whenever $\delta(x)<1$,
$$
f(x,u(x))\geq b {u(x)}^q \geq b{u_0(x)}^q \geq c{\delta(x)}^{-q\beta}
$$
which would imply
$$
\int_\Omega G_\Omega(x,y)\,f(y,u(y))\;dy=+\infty,\quad x\in\Omega,
$$
which means that the problem is not solvable.


\subsection{Complete blow-up: proof of Theorem \ref{complete-blowup}}

Let us first prove the theorem in the case of null boundary data.
The first claim is that
$$
\int_\Omega f_k(x,u_k(x))\,{\delta(x)}^s\;dx\ \xrightarrow{k\uparrow+\infty}\ +\infty.
$$
Suppose by contradiction that the sequence 
of integrals is bounded by a constant $C$.
Consider an increasing sequence of nonnegative
${\{\psi_N\}}_{N\in\N}\subseteq C^\infty_c(\Omega)$
such that $\psi_N\uparrow1$ in $\Omega$
and choose $\phi_N\in\T(\Omega)$ in such a way that
$\Ds\phi_N=\psi_N$ holds in $\Omega$. Then
$$
\int_\Omega u_k\,\psi_N=\int_\Omega f_k(x,u_k)\,\phi_N\leq c\int_\Omega f_k(x,u_k(x))\,{\delta(x)}^s\;dx
$$
for some constant $c>0$ not depending on $N$, see \cite[Proposition 1.1]{rosserra}.
By letting $N\uparrow+\infty$, we deduce that $u_k$ is a bounded sequence in $L^1(\Omega)$.
Take now $\underline{u}_k$ as the minimal solution to the $k$-th nonlinear problem:
since $\Ds\underline{u}_{k+1}=f_{k+1}(x,\underline{u}_{k+1})\geq f_k(x,\underline{u}_{k+1})$
then $\underline{u}_k$ is an increasing sequence and it admits 
a pointwise limit $u$. Also, this $u$ is limit also in the $L^1$-norm,
since $\underline{u}_k$ is bounded in this norm.
But then, for any $\phi\in\T(\Omega)$, we have
$$
\int_\Omega u\,\Ds\phi=\lim_{k\uparrow+\infty}\int_\Omega\underline{u}_k\,\Ds\phi=
\lim_{k\uparrow+\infty}\int_\Omega f_k(x,\underline{u}_k)\,\phi=
\int_\Omega f(x,u)\,\phi,
$$
that is $u\in L^1(\Omega)$ would be a weak solution, a contradiction.

Our second claim is that
$$
u_k(x)\geq c\left[\int_\Omega f_k(y,u_k(y))\,{\delta(y)}^s\;dy\right]\,{\delta(x)}^s,
$$
for some constant $c>0$ independent of $k$.
To do this, we exploit \eqref{est-green}. Call
$\Omega_1(x)=\{y\in\Omega:|y-x|\leq\delta(x)\delta(y)\}$,
$\Omega_2(x)=\{y\in\Omega:|y-x|>\delta(x)\delta(y)\}$
and $d(\Omega)=\sup\{|y-x|:x,\,y\in\Omega\}$:
$$
\begin{array}{rcl}
u_k(x) & = & \displaystyle
\int_\Omega G_\Omega(x,y)\,f_k(y,u_k(y))\;dy  \\
& & \\
& \geq & \displaystyle
c_2 \int_\Omega\left[{|x-y|}^{2s}\wedge{\delta(x)}^s{\delta(y)}^s\right]f_k(y,u_k(y))\;\frac{dy}{{|x-y|}^n}  \\
& & \\
& = & c_2 \displaystyle
\int_{\Omega_1(x)}f_k(y,u_k(y))\;\frac{dy}{{|x-y|}^{n-2s}} +
c_2\,{\delta(x)}^s \int_{\Omega_2(x)}{\delta(y)}^s f_k(y,u_k(y))\;\frac{dy}{{|x-y|}^n}  \\
& & \\
& \geq & \displaystyle
\frac{c_2}{{d(\Omega)}^{2s}} \int_{\Omega_1(x)}{\delta(x)}^s{\delta(y)}^s f_k(y,u_k(y))\;\frac{dy}{{|x-y|}^{n-2s}} +
\frac{c_2}{{d(\Omega)}^n}\,{\delta(x)}^s \int_{\Omega_2(x)}{\delta(y)}^s f_k(y,u_k(y))\;dy  \\
& & \\
& \geq & \displaystyle
\frac{c_2}{{d(\Omega)}^n}\,{\delta(x)}^s \int_\Omega{\delta(y)}^s f_k(y,u_k(y))\;dy  
\end{array}
$$
and we see then how we have complete blow-up.

In the case of nonhomogeneous boundary conditions,
we consider the $s$-harmonic function $u_0$ induced by data $g$ and $h$,
and we denote by $F(x,t)=f(x,u_0(x)+t)$ for $x\in\Omega$, $t\geq 0$.
By hypothesis we have then that there is no weak solution to
$$
\left\lbrace\begin{array}{ll}
\Ds v = F(x,v) & \hbox{ in }\Omega, \\
v=0 & \hbox{ in }\C\Omega, \\
Ev=0 & \hbox{ on }\partial\Omega.
\end{array}\right.
$$
Since any monotone approximation on $f$ is also a monotone approximation of $F$,
then there is complete blow-up in the problem for $v$
and this bears the complete blow-up for the problem on $u$.

\appendix

\section{Proof of Proposition \ref{integrbypartsform}}

Assume first that $u\in\mathcal{S}$ and $v=0$ in $\R^n\setminus\Omega$, $v\in C^{2s+\eps}(\Omega)\cap C(\overline{\Omega})$
and $\Ds v\in L^1(\R^n)$; 
then we can regularize $v$, 
via the convolution with a mollifier ${\{\alpha_k(x)=k^n\alpha(kx):\alpha(x)=0\hbox{ for }|x|\geq1\}}_{k\in\N}$ 
in order to obtain a sequence
${\{v_k:=\alpha_k*v\}}_{k\in\N}\subseteq C^\infty_c(\R^n)\subseteq\mathcal{S}$
converging uniformly to $v$ in $\R^n$.
Also,
$$
\Ds v_k=v*\Ds\alpha_k
$$
indeed 
\begin{multline*}
\Ds v_k(x)=\A(n,s)\,PV\int_{\R^n}\frac{v_k(x)-v_k(y)}{{|x-y|}^{n+2s}}\;dy=\\=
\A(n,s)\,\lim_{\eps\downarrow0}\int_{\C B_\eps(x)}\frac{\int_{\R^n}v(z)[\alpha_k(x-z)-\alpha_k(y-z)]\;dz}{{|x-y|}^{n+2s}}\;dy=\\=
\A(n,s)\lim_{\eps\downarrow 0}\int_{\R^n}v(z)\int_{\C B_\eps(x)}\frac{\alpha_k(x-z)-\alpha_k(y-z)}{{|x-y|}^{n+2s}}\;dy\;dz=\\=
\A(n,s)\lim_{\eps\downarrow 0}\int_{\R^n}v(z)\int_{\C B_\eps(x-z)}\frac{\alpha_k(x-z)-\alpha_k(y)}{{|x-z-y|}^{n+2s}}\;dy\;dz
\end{multline*}
where we have
$$
\A(n,s)\int_{\C B_\eps(x)}\frac{\alpha_k(x)-\alpha_k(y)}{{|x-y|}^{n+2s}}\;dy
\xrightarrow[\eps\downarrow0]{}\Ds\alpha_k(x),\quad\hbox{ uniformly in }\R^n
$$
since, see \cite[Lemma 3.2]{hitchhiker},
\begin{eqnarray}
 & & \left|\Ds\alpha_k(x)-\A(n,s)\int_{\C B_\eps(x)}\frac{\alpha_k(x)-\alpha_k(y)}{{|x-y|}^{n+2s}}\;dy\right|= \nonumber \\
& & =\left|\A(n,s)\,PV\int_{B_\eps(x)}\frac{\alpha_k(x)-\alpha_k(y)}{{|x-y|}^{n+2s}}\;dy\right| 
=\left|\frac{\A(n,s)}{2}\int_{B_\eps}\frac{\alpha_k(x+y)+\alpha_k(x-y)-2\alpha_k(x)}{{|y|}^{n+2s}}\;dy\right| \nonumber \\
& & \leq \frac{\A(n,s)\,\Arrowvert\alpha_k\Arrowvert_{C^2(B_\eps(x))}}{2}\int_{B_\eps}\frac{dy}{{|y|}^{n+2s-2}} \label{111} \\
& & \leq \frac{\A(n,s)\,\Arrowvert\alpha_k\Arrowvert_{C^2(\R^n)}}{2}\int_{B_\eps}\frac{dy}{{|y|}^{n+2s-2}}
\xrightarrow[\eps\downarrow0]{}0 \nonumber.
\end{eqnarray}
Following the very same proof up to \eqref{111}, since $v\in C^{2s+\eps}(\Omega)$, it is also possible to prove
$$
\Ds v_k(x)=\left(\alpha_k*\Ds v\right)(x),\quad\hbox{ for }\delta(x)>\frac{1}{k}.
$$
Since $\Ds v\in C(\R^n\setminus\Omega)$, see \cite[Proposition 2.4]{silvestre}, we infer that
$$
\Ds v_k(x)\xrightarrow[k\uparrow+\infty]{}\Ds v,\quad\hbox{ for every } x\in\R^n\setminus\partial\Omega.
$$

We give now a pointwise estimate on $\Ds\alpha_k(x),\ x\in B_{1/k}$.
Since $\alpha_k\in C^\infty_c(\R^n)$, we can write (see \cite[Paragraph 2.1]{silvestre})
$$
\Ds\alpha_k(x)=\left[{(-\Delta)}^{s-1}\circ(-\Delta)\right]\alpha_k(x)={(-\Delta)}^{s-1}[-k^{n+2}\Delta\alpha(kx)]
=\A(n,-s)\int_{\R^n}\frac{k^{n+2}\Delta\alpha(ky)}{{|x-y|}^{n+2s-2}}\;dy.
$$
With a change of variable we entail
$$
\Ds\alpha_k(x)=\A(n,-s)\,k^{n+2s}\int_B\frac{\Delta\alpha(y)}{{|k\,x-y|}^{n+2s-2}}\;dy
$$
and therefore
\begin{equation}\label{dsalfa}
|\Ds\alpha_k(x)|\leq \frac{\omega_{n-1}}{2\,(1-s)}\, k^{n+2s}\Arrowvert\Delta\alpha\Arrowvert_{L^\infty(\R^n)}.
\end{equation}
Indeed,
$$
\int_B\frac{dy}{{|z-y|}^{n+2s-2}}
$$
is a bounded function of $z$, having in $z=0$ its maximum $\frac{\omega_{n-1}}{2\,(1-s)}$.
Indeed,
\begin{multline*}
\int_B\frac{dy}{{|z-y|}^{n+2s-2}}=\int_{B\cap B(z)}\frac{dy}{{|z-y|}^{n+2s-2}}+\int_{B\setminus B(z)}\frac{dy}{{|z-y|}^{n+2s-2}}=\\
=\int_{B\cap B(z)}\frac{dy}{{|y|}^{n+2s-2}}+\int_{B\setminus B(z)}\frac{dy}{{|z-y|}^{n+2s-2}}
\leq \int_{B\cap B(z)}\frac{dy}{{|y|}^{n+2s-2}}+\int_{B\setminus B(z)}\frac{dy}{{|y|}^{n+2s-2}}
= \int_B\frac{dy}{{|y|}^{n+2s-2}}.
\end{multline*}
Using \eqref{dsalfa}, the $L^1$-norm of $\Ds v_k$ can be estimated by
\begin{multline*}
\begin{split}
\int_{\R^n}|\Ds v_k(x)|\;dx=\int_{\{\delta(x)<1/k\}}|(v*\Ds\alpha_k)(x)|\;dx
+\int_{\{\delta(x)\geq1/k\}}|(\alpha_k*\Ds v)(x)|\;dx\leq\\\leq
\int_{\{\delta(x)<1/k\}}\int_{B_{1/k}(x)}|v(y)\Ds\alpha_k(x-y)|\;dy\;dx\ +\qquad\qquad\qquad\qquad\qquad\\
+\int_{\{\delta(x)\geq1/k\}}\int_{B_{1/k}}|\alpha_k(y)\Ds v(x-y)|\;dy\;dx\leq\\\leq
\int_{\{\delta(x)<1/k\}}\int_{B_{1/k}(x)}C{\delta(y)}^sk^{n+2s}\Arrowvert\Delta\alpha\Arrowvert_{L^\infty(\R^n)}\;dy\;dx\ +\qquad\qquad\qquad\qquad\qquad\\
+\int_{B_{1/k}}\alpha_k(y)\int_{\R^n}|\Ds v(x-y)|\;dx\;dy\leq\\
\frac{C\,\Arrowvert\Delta\alpha\Arrowvert_{L^\infty(\R^n)}}{k^{1-s}}+\Arrowvert\Ds v\Arrowvert_{L^1(\R^n)},
\end{split}
\end{multline*}
so that, by the Fatou's Lemma we have
$$
\int_{\R^n}|\Ds v|\leq \liminf_{k\uparrow+\infty}\int_{\R^n}|\Ds v_k|\leq
\limsup_{k\uparrow+\infty}\int_{\R^n}|\Ds v_k|\leq \int_{\R^n}|\Ds v|
$$
which means
$$
\Arrowvert\Ds v_k\Arrowvert_{L^1(\R^n)}\xrightarrow[k\uparrow+\infty]{}\Arrowvert\Ds v\Arrowvert_{L^1(\R^n)}.
$$
Apply the Fatou's lemma to $|\Ds v_k|+|\Ds v|-|\Ds v_k-\Ds v|\geq 0$ to deduce
\begin{multline*}
2\int_{\R^n}|\Ds v|\leq \liminf_{k\uparrow+\infty}\int_{\R^n}(|\Ds v_k|+|\Ds v|-|\Ds v_k-\Ds v|)=\\
=2\int_{\R^n}|\Ds v|-\limsup_{k\uparrow+\infty}\int_{\R^n}|\Ds v_k-\Ds v|
\end{multline*}
and conclude 
$$
\Arrowvert\Ds v_k-\Ds v\Arrowvert_{L^1(\R^n)}\xrightarrow[k\uparrow+\infty]{}0
$$
and, for any $u\in\mathcal{S}$,
$$
\int_{\R^n}u\Ds v_k\xrightarrow[k\uparrow+\infty]{}\int_{\R^n}u\Ds v.
$$
Note now that
$$
\int_{\Omega_k}v_k\Ds u=\int_{\R^n}v_k\Ds u\xrightarrow[]{k\uparrow 0}\int_{\R^n}v\Ds u=\int_{\Omega}v\Ds u
$$
since $\Arrowvert v_k-v\Arrowvert_{L^\infty(\R^n)}\xrightarrow[]{k\uparrow 0}0$, and this concludes the proof.


\section{The Liouville theorem for $s$-harmonic functions}

Following the proof of the classic Liouville Theorem 
due to Nelson \cite{nelson}, it is possible to prove the analogous
result for the fractional Laplacian.

\begin{theo}\label{liouv} Let $u:\R^n\rightarrow\R$ be a function
which is $s$-harmonic throughout $\R^n$.
Then, if $u$ is bounded in $\R^n$, it is constant.
\end{theo}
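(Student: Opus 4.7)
The plan is to adapt Nelson's proof of the classical Liouville theorem, using the mean value property \eqref{sharm-eq} in place of the usual average on balls. Since $u$ is $s$-harmonic throughout $\R^n$, the identity
\[
u(x)=\int_{\R^n}\eta_r(z-x)\,u(z)\;dz
=\int_{\R^n}\eta_r(w)\,u(x+w)\;dw
\]
holds for \emph{every} $r>0$ and every $x\in\R^n$. Fix two arbitrary points $x,y\in\R^n$. After the affine change of variable $z=x+w$ in the formula for $u(y)$, one rewrites
\[
u(y)=\int_{\R^n}\eta_r\bigl(w-(y-x)\bigr)\,u(x+w)\;dw.
\]
Subtracting and using $u\in L^\infty(\R^n)$ gives the clean bound
\[
|u(x)-u(y)|\ \leq\ \Arrowvert u\Arrowvert_{L^\infty(\R^n)}\int_{\R^n}\bigl|\eta_r(w)-\eta_r(w-(y-x))\bigr|\;dw.
\]

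The key step is then to show that the right-hand side tends to $0$ as $r\uparrow+\infty$. Using the scaling relation $\eta_r(w)=r^{-n}\eta(w/r)$ and the substitution $w=r\widetilde w$, one obtains
\[
\int_{\R^n}\bigl|\eta_r(w)-\eta_r(w-h)\bigr|\;dw
\ =\ \int_{\R^n}\Bigl|\eta(\widetilde w)-\eta\bigl(\widetilde w-h/r\bigr)\Bigr|\;d\widetilde w,\qquad h=y-x.
\]
Since $\eta\in L^1(\R^n)$ by \eqref{cns} and translations are continuous in $L^1$, the right-hand side vanishes as $r\uparrow+\infty$, because $h/r\to 0$. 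Letting $r\uparrow+\infty$ in the estimate yields $u(x)=u(y)$, and since $x,y$ were arbitrary, $u$ is constant.

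The only points requiring a bit of care are (i) verifying that the mean value formula \eqref{sharm-eq} is genuinely valid for all $r>0$ (this is given: $\dist(x,\partial\Omega)=+\infty$ here, so no radius is excluded), and (ii) invoking $L^1$-continuity of translations for the specific radial profile $\eta$. The latter is a standard fact for any $L^1$ function (approximate $\eta$ in $L^1$ by $C_c(\R^n)$ functions, use uniform continuity on each, then pass to the limit), so no delicate pointwise estimate on $\eta_r-\tau_h\eta_r$ near the spherical singularity $|w|=r$ is needed. This is what I expect to be the mildly subtle step, but it reduces to a textbook fact as soon as the scaling $w\mapsto r\widetilde w$ is performed.
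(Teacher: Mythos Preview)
Your proof is correct and, like the paper's, is modeled on Nelson's argument for the classical Liouville theorem: write $u(x)-u(y)$ via the mean value representation and show the right-hand side vanishes as $r\uparrow+\infty$.

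The executions differ, however. The paper splits the integral into three regions---$\C B_r(x_1)\cap B_r(x_2)$, its mirror image, and $\C B_r(x_1)\cap\C B_r(x_2)$---and handles each with explicit pointwise estimates on the Poisson-type kernel $\eta_r$, including a somewhat delicate symmetrization and monotone convergence argument for the overlap region. Your route is shorter: the scaling identity $\eta_r(w)=r^{-n}\eta(w/r)$ collapses everything to $\int_{\R^n}|\eta(\widetilde w)-\eta(\widetilde w-h/r)|\,d\widetilde w$, and then the standard $L^1$-continuity of translations (applicable because $\eta\in L^1(\R^n)$ by \eqref{cns}) finishes the job immediately. What you gain is economy---no region splitting, no tracking of the singularity at $|w|=r$, no explicit tail or overlap estimates. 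What the paper's hands-on approach buys is perhaps a slightly more self-contained argument that does not invoke an external lemma, but your appeal to $L^1$-translation continuity is entirely standard and arguably preferable.
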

\begin{tfa} Take two arbitrary points $x_1,\ x_2\in\R^n$.
Both satisfy for all $r>0$
$$
u(x_1)=\int_{\C B_r(x_1)}\eta_r(y-x_1)\,u(y)\;dy,
\qquad
u(x_2)=\int_{\C B_r(x_2)}\eta_r(y-x_2)\,u(y)\;dy
$$
Denote by $M:=\sup_{\R^n}|u|$, 
which is finite by hypothesis:
\begin{eqnarray*}
|u(x_1)-u(x_2)| & = & 
\left\arrowvert\int_{\C B_r(x_1)}\eta_r(y-x_1)\,u(y)\;dy-
\int_{\C B_r(x_2)}\eta_r(y-x_2)\,u(y)\;dy\right\arrowvert \\
& \leq & 
\int_{\C B_r(x_1)\cap B_r(x_2)}\eta_r(y-x_1)\,M\;dy+
\int_{\C B_r(x_2)\cap B_r(x_1)}\eta_r(y-x_2)\,M\;dy\ + \\
& & 
+\int_{\C B_r(x_1)\cap\C B_r(x_2)}\left\arrowvert
\eta_r(y-x_1)-\eta_r(y-x_1)\right\arrowvert\,M\;dy
\end{eqnarray*}
Define $\delta:=|x_1-x_2|$. The first addend (and similarly the second)
vanish as $r\rightarrow+\infty$:
\begin{eqnarray*}
\int_{\C B_r(x_1)\cap B_r(x_2)}\eta_r(y-x_1)\;dy & \leq & 
\int_{B_{r+\delta}\setminus B_r}\frac{c(n,s)\,r^{2s}}{|y|^n\,(|y|^2-r^2)^s}\;dy \\
& = & 
\omega_{n-1}c(n,s)\,r^{2s}\int_r^{r+\delta}\frac{d\rho}{\rho\,(\rho^2-r^2)^s} \\
& \leq & 
\omega_{n-1}c(n,s)\,\frac{1}{r^{1-s}}
\int_r^{r+\delta}\frac{d\rho}{(\rho-r)^s}\xrightarrow[]{r\uparrow+\infty}0. 
\end{eqnarray*}
The third one is more delicate.
\begin{eqnarray*}
\int_{\C B_r(x_1)\cap\C B_r(x_2)}\left\arrowvert
\frac{r^{2s}}{|y-x_1|^n\,(|y-x_1|^2-r^2)^s}-
\frac{r^{2s}}{|y-x_2|^n\,(|y-x_2|^2-r^2)^s}\right\arrowvert\;dy \\
=\int_{\C B(x_1/r)\cap\C B(x_2/r)}\left\arrowvert
\frac{1}{|y-\frac{x_1}{r}|^n\,(|y-\frac{x_1}{r}|^2-1)^s}-
\frac{1}{|y-\frac{x_2}{r}|^n\,(|y-\frac{x_2}{r}|^2-1)^s}\right\arrowvert\;dy \\
=\int_{\C B\cap\C B(\frac{x_2-x_1}{r})}\left\arrowvert
\frac{1}{|y|^n\,(|y|^2-1)^s}-
\frac{1}{|y-\frac{x_2-x_1}{r}|^n\,(|y-\frac{x_2-x_1}{r}|^2-1)^s}\right\arrowvert\;dy \\
\left[\hbox{\small setting 
$x_r=(x_2-x_1)/r$, $|x_r|=\delta/r\rightarrow 0$ as $r\rightarrow+\infty$}\right] \\
=\int_{\C B\cap\C B(x_r)}\left\arrowvert
\frac{1}{|y|^n\,(|y|^2-1)^s}-
\frac{1}{|y-x_r|^n\,(|y-x_r|^2-1)^s}\right\arrowvert\;dy \\
\left[\hbox{\small taking wlog $x_r=\frac{\delta e_1}{r}$
and defining $H_r=\{x\in\R^n:x_1>\frac{\delta}{2r},\ |x-x_r|>1\}$}\right] \\
=2\int_{H_r}\left[
\frac{1}{|y-x_r|^n\,(|y-x_r|^2-1)^s}
-\frac{1}{|y|^n\,(|y|^2-1)^s}\right]\;dy \\
\leq 2\,\omega_{n-1}
\int_1^{+\infty}\left[
\frac{1}{\rho\,(\rho^2-1)^s}
-\frac{\rho^{n-1}}{(\rho+\frac{\delta}{r})^n\,((\rho+\frac{\delta}{r})^2-1)^s}
\right]\;d\rho \\
\leq 2\,\omega_{n-1}
\int_1^{+\infty}\frac{1}{\rho\,(1+\frac{\delta}{r\rho})^n}
\left[
\frac{1}{(\rho^2-1)^s}
-\frac{\rho^{n-1}}
{((\rho+\frac{\delta}{r})^2-1)^s}\right]\;d\rho\; 
\xrightarrow[]{r\uparrow+\infty}0
\end{eqnarray*}
thanks to the Monotone Convergence Theorem.
\end{tfa}

\section{Asymptotics 
as $s\uparrow 1$}\label{meanform-app}

First of all, the proof of Theorem \ref{meanvalue} implies
$$
\lim_{s\uparrow 1}\gamma(n,s,r)=\gamma(n,1,r)=
\frac{\Gamma(n/2)}{4\,\Gamma\left(\frac{n+2}{2}\right)\,\Gamma(2)}\;r^2
=\frac{r^2}{4}\cdot\frac{\Gamma(n/2)}{\frac{n}{2}\,\Gamma(n/2)}=\frac{r^2}{2n}.
$$
Also
$$
1=\int_{\C B_r}\eta_r(y)\;dy=
\int_{\C B_r}\frac{c(n,s)\,r^{2s}}{|y|^n\left(|y|^2-r^2\right)^s}\;dy
\qquad\hbox{where }
c(n,s)=\frac{2\sin(\pi s)}{\pi\,\omega_{n-1}}
$$
where we have denoted by $\omega_{n-1}=|\partial B|_{n-1}$,
the $(n-1)$-dimensional Hausdorff measure of the unit sphere. Then
\begin{multline*}
1\ =\ \frac{2\sin(\pi s)}{\pi\,\omega_{n-1}}
\int_{\C B_r}\frac{r^{2s}}{|y|^n\left(|y|^2-r^2\right)^s}\;dy 
\ =\ \frac{2\sin(\pi s)}{\pi\,\omega_{n-1}}
\int_{\C B}\frac{1}{|y|^n\left(|y|^2-1\right)^s}\;dy \ =\\
\ =\ \frac{2\sin(\pi s)}{\pi\,\omega_{n-1}}
\int_{\partial B}\left[
\int_1^{+\infty}\frac{d\rho}{\rho\,\left(\rho^2-1\right)^s}
\right]\;d\mathcal{H}^{n-1}(\theta) \ =\ 
\frac{2\sin(\pi s)}{\pi}\int_1^{+\infty}\frac{d\rho}{\rho\,\left(\rho^2-1\right)^s}.
\end{multline*}
With similar computation
$$
\int_{\C B_r}\eta_r(y)\:u(x-y)\;dy\ =\ 
\frac{2\sin(\pi s)}{\pi}\int_{\partial B}\left[
\int_1^{+\infty}\frac{u(x-r\rho\theta)}{\rho\,\left(\rho^2-1\right)^s}\;d\rho
\right]\;d\mathcal{H}^{n-1}(\theta).
$$
Fix a $\theta\in\partial B$ and consider the difference
$$
\left|\frac{2\sin(\pi s)}{\pi}\int_1^{+\infty}\frac{u(x-r\rho\theta)}{\rho\,\left(\rho^2-1\right)^s}\;d\rho
\ -\ u(x-r\theta)\right|\ \leq\ 
\frac{2\sin(\pi s)}{\pi}\int_1^{+\infty}\frac{|u(x-r\rho\theta)-u(x-r\theta)|}{\rho\,\left(\rho^2-1\right)^s}\;d\rho:
$$
since we are handling $C^2$ functions 
we can push a bit further the estimate to deduce
\begin{multline*}
\left|\frac{2\sin(\pi s)}{\pi}\int_1^{+\infty}\frac{u(x-r\rho\theta)}{\rho\,\left(\rho^2-1\right)^s}\;d\rho
\ -\ u(x-r\theta)\right|\ \leq\ \\
\leq\ \frac{2\sin(\pi s)}{\pi}\int_1^{1+\delta}\frac{C\,\left(\rho-1\right)^{1-s}}{\rho\,\left(\rho+1\right)^s}\;d\rho
+\frac{2\sin(\pi s)}{\pi}\int_{1+\delta}^{+\infty}\frac{|u(x-r\rho\theta)-u(x-r\theta)|}{\rho\,\left(\rho^2-1\right)^s}\;d\rho
\xrightarrow[s\uparrow 1]{}0
\end{multline*}
therefore
$$
\int_{\C B_r}\eta_r(y)\:u(x-y)\;dy\ 
\xrightarrow[s\uparrow 1]{}\ 
\frac{1}{\omega_{n-1}\,r^{n-1}}\int_{\partial B_r}u(x-y)\;d\mathcal{H}^{n-1}(y).
$$
The choice of the point $z\in \overline{B_r}$ in \eqref{meanform} depends 
on the value of $s$, but since these are all points belonging to a compact set,
we can build $\{s_k\}_{k\in\N}\subseteq(0,1)$, $s_k\rightarrow 1$ as $k\rightarrow+\infty$,
such that $z(s_k)\rightarrow z_0\in \overline{B_r}$.
Since it is known (see \cite[Proposition 4.4]{hitchhiker}) that
$$
\lim_{s\uparrow 1}(-\Delta)^su=-\Delta u,
$$
then
$$
|(-\Delta)^su\,(z(s_k))+\Delta u(z_0)|\leq
|(-\Delta)^su\,(z(s_k))-(-\Delta)^su\,(z_0)|+
|(-\Delta)^su\,(z_0)+\Delta u(z_0)|
\xrightarrow{k\rightarrow+\infty}0.
$$
Finally we have proven that
\begin{multline*}
u(x)\ =\ \int_{\C B_r}\eta_r(y)\:u(x-y)\;dy + \gamma(n,s,r)(-\Delta)^su(z(s_k))
\\ \xrightarrow{k\rightarrow+\infty}\ 
\frac{1}{\omega_{n-1}\,{r}^{n-1}}\int_{\partial B_r}u(x-y)\;d\mathcal{H}^{n-1}(y)
 - \frac{r^2}{2n}\,\Delta u(z_0)
\end{multline*}
which is a known formula for $C^2$ functions,
see e.g. \cite[Proposition A.1.2]{dupaigne-book}.

\subsection*{Acknowledgements}
It is a pleasure and an honour to thank professor Louis Dupaigne
for his affectionate help, punctual suggestions, 
passionate questions, restless encouragement
and the attentive reading of this paper.

\nocite{*}
\bibliographystyle{plain}
\bibliography{biblio}

\end{document}